\newtheorem{theorem}{Theorem}[section]
\newtheorem{definition}[theorem]{Definition}
\newtheorem{lemma}[theorem]{Lemma}
\newtheorem{corollary}[theorem]{Corollary}
\newtheorem{proposition}[theorem]{Proposition} 
\newtheorem{example}[theorem]{Example}
\newenvironment{proof}{\noindent {\bf Proof.\ }}{$\blacksquare$\vspace{2ex}}
\newcommand\blfootnote[1]{%
  \begingroup
  \renewcommand\thefootnote{}\footnote{#1}%
  \addtocounter{footnote}{-1}%
  \endgroup
}
\begin{document}

\begin{center}

{\LARGE \textbf{ Reduced biquaternion  tensors and applications in color video processing } }
\blfootnote{This research is supported by Macao Science and Technology Development Fund (No. 0013/2021/ITP), the  NSFC (11571220), Canada NSERC, and the joint research and development fund of Wuyi
University, Hong Kong and Macao (2019WGALH20).\par
* Corresponding author
E-mail: xiliu@must.edu.mo
 }
\bigskip
$${\large \textbf{Cui-E Yu}^{1}, \textbf{Xin Liu}^{1, \ast}, \textbf{Hui Luo}^{2}, \textbf{Yang Zhang}^{3}}$$
\newline $^{\text{1}}$Macau Institute of Systems Engineering, Faculty of Innovation Engineering, \\ 
Macau University of Science and Technology,
 Macau, 999078, P. R. China.
 \newline $^{\text{2}}$School of Computer Science and Engineering, Faculty of Innovation Engineering, \\ Macau University of Science and Technology,
 Macau, 999078, P. R. China.
\newline $^{\text{3}}$ Department of Mathematics, University of Manitoba,
\newline Winnipeg, MB, R3T 2N2, Canada.\\

\bigskip
\end{center}

\begin{abstract}
In this paper, we introduce the applications of third-order reduced biquaternion tensors in color video processing. 
We first develop an algorithm for computing the singular value decomposition (SVD) of a third-order reduced biquaternion tensor via a new Ht-product. As theoretical applications, we define the Moore-Penrose inverse of a third-order reduced biquaternion  tensor
and consider  its   characterizations via its SVD.
Using Moore-Penrose inverses,  we mainly discuss the general (or Hermitian) solutions to reduced biquaternion tensor equation $\mathcal{A}\ast_{Ht} \mathcal{X}=\mathcal{B}$   as well as its least-squares solutions.  Finally, we develop two algorithms and apply them in   color video compression and deblurring,  both of which perform better than the compared algorithms.
\\

\noindent\textbf{Key words:} Reduced biquaternion tensor; Ht-product; Singular value decomposition; Moore-Penrose inverse; Color video processing.
\\

\noindent\textbf{AMS 2010 Subject Classification:} 15A09, 15A18, 15A23, 15A24, 15B33, 39B42.

\end{abstract}

\section{\textbf{Introduction}}

 In 1843, Hamilton extended  the complex number field $\mathbb{C}$ to quaternions which is a four dimensional   division algebra over the real number field $\mathbb{R}$. It is well-known that Hamilton quaternions have successful applications in signal and color image processing (e.g., \cite{ EBS14}). To keep the commutativity of usual number systems, in 1892, Segre (\cite{Segre1892}) discovered the following extension:
\[
\mathbb{H}_{c} =\{a_1+a_2 \mathbf{i} + a_3 \mathbf{j}+a_4 \mathbf{k} \ | \ \mathbf{i}^2=-1, \ \mathbf{j}^2=1, \ \mathbf{ij} = \mathbf{ji}= \mathbf{k}, \  a_1,\dots, a_4 \in \mathbb{R}\}.
\]
Clearly,  $\mathbf{k}^2 = -1, \ \mathbf{ki} =\mathbf{ik} =-\mathbf{j}, \ \mathbf{jk} = \mathbf{kj} = \mathbf{i}$.  $\mathbb{H}_c$ is a four dimensional commutative algebra over $\mathbb{R}$ and contains zero divisors, and  it is often called Segre quaternions (\cite{ Pin10}), commutative quaternions (\cite{kosal201902, pei2004, pei2008}) or reduced biquaternions (\cite{pei2008, schtte1990}). In this paper, we call it ``reduced biquaternions" ( or RB for short).

Reduced biquaternions $\mathbb{H}_c$ has numerous applications in many areas. In signal and
color image processing, Pei et al. (\cite{pei2004, pei2008}) showed that the  operations of the discrete reduced biquaternion Fourier transforms and their  corresponding convolutions  and correlations  were much simpler than the existing implementations of the Hamilton quaternions, and   many questions could be performed simultaneously by $\mathbb{H}_c$. They used reduced biquaternion matrices  to represent the color images, and then compressed the images by the SVDs of reduced biquaternion matrices (\texttt{RBSVDs}).   For comparison, their proposed algorithm (\texttt{RBSVDs}) was faster and has better reconstruction quality than the compared algorithm of computing the SVD  of a quaternion matrix.
By adopting  the special structure of a real representation of a reduced 
biquaternion matrix, Ding et al. \cite{ding2021} investigated the least-squares special minimal norm  solutions to RB matrix equation $AX = B$  and applied the least-squares  minimal norm   RB solution to the color image restoration.

A tensor is an extension of the concept of a matrix, which provides a more general and flexible approach to handle and represent complex multidimensional data structures.  \iffalse It can be regarded as  a multidimensional array.
   People   use tensors   to collect and employ multi-way   data.
\fi
Tensors  and tensor decompositions (\cite{carroll1970, kilmer2011, kolda2001, kolda2009,  lathauwer2000, qinzhanglp2022, tucker1966})  have been applied  in many areas such as signal processing,
computer science,   data mining,  neuroscience,  etc. 
Studying   tensor rank decomposition may go back to the  1960s: the CANDECOMP/PARAFAC (CP) decomposition (\cite{carroll1970}) and  the Tucker decomposition (\cite{tucker1966}).
In \cite{kilmer2011}, Kilmer and Martin defined a closed multiplication operation (called t-product) for two real tensors,   and  considered some  important theoretical properties and practical tensor approximation problems.
%In (\cite{he2017}), He et al.  investigate the structure of quaternion tensor singular value decomposition (SVD)  via isomorphic group structures under Einstein product and give the expression of the Moore-Penrose inverse of a quaternion tensor by using the quaternion tensor SVD. They also present the expression of $\eta$-Hermitian solutions to a generalized Sylvester quaternion tensor equation.
Qin et al. \cite{qinzhanglp2022}  defined a tensor product for third-order quaternion tensors   and  discussed the Qt-SVD of a third-order quaternion tensor with applications in color video. We note that a RB tensor also has good capability in representing color video as it does not lose the correlation between the RGB channels.
 Thus this paper will explore some fundamental theories of RB tensors and their applications, such as utilizing the proposed theories of SVD, Moore-Penrose inverses, the minimal norm least-squares solution to compress or deblur color videos.

This paper is organized as follows. We first introduce some operations over reduced biquaternion tensors, and then we define the Ht-product and explore some  important properties of third-order RB  tensors in Section 2.
In Section 3, we investigate the Ht-SVD of a  third-order RB tensor and develop an algorithm   to compute it.
Two theoretical applications of Ht-SVD are given in Sections 4 and 5. That is, in Section 4,    the Moore-Penrose inverse of a third-order RB  tensor is defined  and  some properties including algebraic expression derived by the Ht-SVD are discussed.  In Section 5, we mainly consider the  general/Hertmian/least-squares solutions/minimal norm least-squares solution to reduced biquaternion tensor equation  $\mathcal{ A} \ast_{Ht} \mathcal{X}=\mathcal{B}$.
Finally, in Section 6, we apply our results to color video compression and deblurring.  The experimental data show that   our methods have  excellent performances comparing with other methods.

\section{ The Ht-product of third-order  tensors over $\mathbb{H}_c$}

Throughout this paper, Euler script letters $(\mathcal{A, B, \cdots}) $ are used to refer to tensors, while the capital letters $(A, B,\cdots)$ represent matrices.
The notations  $\mathbb{R}^{n_1 \times n_2} $  and $\mathbb{R}^{n_1 \times n_2 \times n_3} $ represent the set of all the matrices of dimension $ n_1 \times n_2 $  and all the third-order tensors of dimension $ n_1 \times n_2 \times n_3 $  over algebra $\mathbb{R}$, respectively. For a third-order tensor $ \mathcal{A}$,  we denote  the $i$th horizontal,  the $j$th lateral and the $k$th frontal slice   by   $\mathcal{A}(i,:,:), \mathcal{A}(:,j,:) $ and $\mathcal{A}(:,:,k)$  respectively. For simplicity, let $A^{(k)}$ represent the $k$th frontal slice $\mathcal{A}(:,:,k)$.

Recall that any element  $q =q_0 + q_1\mathbf{i}+ q_2\mathbf{j} + q_3\mathbf{k} \in \mathbb{H}_c$  can   be  expressed as $q = q_a +  \mathbf{j}q_b=q_{(c),1} e_1+q_{(c),2} e_2$, where $q_a = q_0 + q_1\mathbf{i}, q_b = q_2 + q_3\mathbf{i},$
$ q_{(c),1}=q_a+q_b,\ q_{(c),2}=q_a-q_b $,
%\in \mathbb{C}$,
and $e_1=(1+\mathbf{j})/2, e_2=(1-\mathbf{j})/2 $.
%\in \mathbb{H}_c$. 
Clearly, $e_1$ and $e_2$
are two special reduced biquaternions  satisfying $e_1=e_1^2=\cdots=e_1^n, e_2=e_2^2=\cdots=e_2^n$ and $e_1 e_2=0$ (\cite{pei2008}). The modulus of  $q$ is $\vert  q \vert = \sqrt{q_0^2 +q_1^2+q_2^2+q_3^2} $. 

For any matrix $A \in \mathbb{H}_c^{m \times n}$, we can write it in two forms, that is,   $A=A_0+A_1 \mathbf{i}+A_2 \mathbf{j}+A_3 \mathbf{k} = B_1+ \mathbf{j} B_2$ with $A_0, \dots, A_3\in \mathbb{R}^{m \times n}, B_1=A_0 +A_1 \mathbf{i},  B_2= A_2 +A_3 \mathbf{i} \in \mathbb{C}^{m \times n}$. Then its conjugate transpose is given by  $A^\ast=A_0^T-A_1^T \mathbf{i}+A_2^T \mathbf{j}-A_3^T \mathbf{k} = B_1^\ast+ \mathbf{j} B_2 ^\ast$, where $B_1^\ast, B_2 ^\ast$ denote the conjugate transpose of complex matrices $B_1, B_2$.  
% We denote by $\mathbb{R}(A)=A_0$ the real part of $A$. 
Moreover,
 $ A \in \mathbb{H}^{n \times n}_c$    is said to be Hermitian (resp. unitary) if and only if  $ A= A^\ast$ (resp. $A A^\ast = A^\ast A=I_n$).
Furthermore,  if we rewrite  $A \in \mathbb{H}_c^{m \times n}$ as $A=A_{(c),1} e_1+ A_{(c),2} e_2$, where $A_{(c),1}=B_1+B_2$ and  $A_{(c),2}=B_1-B_2$ are complex matrices,
then the conjugate transpose of $A$ is $A^\ast=A_{(c),1}^\ast e_1+ A_{(c),2}^\ast e_2 $.
Thus for any  $B = B_{(c),1} e_1+ B_{(c),2} e_2 \in \mathbb{H}_c^{n\times l}$,
$AB=A_{(c),1} B_{(c),1} e_1 + A_{(c),2}  B_{(c),2} e_2$,   and  we can use it to prove that
$(AB)^\ast= B^\ast A^\ast$.
For $A  =(a_{i_1 i_2})\in \mathbb{H}_c^{m\times n}$, we define the Frobenius norm of $A$ as $ \|  A \|_F= \sqrt{  \sum_{i_1 =1}^{m}   \sum_{i_2 =1}^{n}  \vert  a_{i_1 i_2 } \vert ^2   }$.
Moreover, it can be  verified that
%\begin{equation*}\label{RBF}
$\|  A \|_F^2 = \mbox{tr}(\Re (A^{\ast} A)),$ 
%\end{equation*}
where 
 $\Re(\cdot)$ denotes the real part of a reduced biquaternion matrix.
\vspace{.2cm}

%%%
\iffalse
The equivalent complex matrix of the RB matrix $A$ (\cite{pei2008}) is
\begin{equation}\label{RBCdef}
\widetilde{A}_{(c)}= \begin{bmatrix}
 A_{(c),1} & 0 \\
 0 & A_{(c),2}  \\
\end{bmatrix}.
\end{equation}
Moreover, it can be  verified that
\begin{equation}\label{RBF}
 \|  A \|_F= \sqrt{   \| B_1  \| ^2  + \| B_2  \| ^2}
 =\frac{1}{\sqrt{2}} \sqrt{ \| A_{(c),1}  \| ^2  + \| A_{(c),2} \| ^2 } =\frac{1}{\sqrt{2}} \| \widetilde{A}_{(c)}  \|_F .
\end{equation}

\begin{proposition} (\cite{pei2008})
\label{RBC01}
Let $A, B \in \mathbb{H}^{m\times n} ,  C \in \mathbb{H}^{n \times l} , D \in \mathbb{H}^{n\times n}$. Then
\begin{itemize}
\item[(1)] $(\widetilde{I}_n)_{c}=I_{2n};$
\item[(2)] $(\widetilde{A+B})_{c}=\widetilde{A}_{c} + \widetilde{B}_{c};$
\item[(3)] $(\widetilde{AC})_{c}=\widetilde{A}_{c} \widetilde{C}_{c};$
\item[(4)] $(\widetilde{D^{-1}})_{c}=(\widetilde{D}_{c})^{-1} $ if $D$ is invertible.
\end{itemize}
\end{proposition}

For a third-order RB  tensor $\mathcal{A} =(a_{i_1 i_2 i_3}) \in \mathbb{H}^{n_1\times n_2 \times n_3}$,  its Frobenius norm  is defined as
$$ \|  \mathcal{A} \|_F= \sqrt{  \sum_{i_1=1}^{n_1}   \sum_{i_2=1}^{n_2}  \sum_{i_3=1}^{n_3} \vert  a_{i_1 i_2 i_3} \vert ^2   } .$$
\fi
%%%%%%%

%Let $\mathcal{A}  \in \mathbb{H}_c^{n_1\times n_2 \times n_3}$.
The block circulant matrix $\textbf{circ}(\mathcal{A})$  generated by  a third-order tensor  $\mathcal{A}$'s
frontal slices $A^{(1)},$ $A^{(2)},  \cdots, A^{(n_3)}$ is given as
$$\textbf{circ}(\mathcal{A})= \begin{bmatrix}
 A^{(1)} & A^{(n_3)} & \cdots  & A^{(2)} \\
 A^{(2)} & A^{(1)}   & \cdots  & A^{(3)}\\
 \vdots  & \vdots    &\ddots   &\vdots\\
 A^{(n_3)} & A^{(n_3-1)} & \cdots & A^{(1)}\\
\end{bmatrix}. $$
We will use $\textbf{Vec}$ operator to convert a tensor  $\mathcal{A}$  into a block matrix  $\textbf{Vec}(\mathcal{A}) $:
\begin{equation*} \label{Vec}
\textbf{Vec}(\mathcal{A})=\begin{bmatrix}
 A^{(1)}  \\
 A^{(2)} \\
 \vdots  \\
 A^{(n_3)} \\
\end{bmatrix},
\end{equation*}\
and use the inverse operation $\textbf{Fold}$  to take the block  matrix $\textbf{Vec}(\mathcal{A})$  back to the tensor $\mathcal{A}$:
\[
\textbf{Fold}(\textbf{Vec}(\mathcal{A}))=\mathcal{A}.
\]
Moreover, we use the frontal slices of $\mathcal{A}$ to define a block diagonal matrix as following:
\begin{equation}
\label{hatdiag}
 \textbf{diag}({\mathcal{A}}) \doteq \begin{bmatrix}
 {A}^{(1)}   &  &  &  \\
     & {A}^{(2)}  &  & \\
     &   &\ddots   & \\
     &   &  & {A}^{(n_3)}\\
\end{bmatrix}.
\end{equation}
%Clearly,  $ \mathcal{A} = \mathcal{B}   \Leftrightarrow  \textbf{diag}(\widehat{\mathcal{A}}) = \textbf{diag}(\widehat{\mathcal{B}}) $.

Note that by using the normalized Discrete Fourier transformation (DFT) matrix, a  complex circulant matrix can be diagonalized   (\cite{kilmer2011}, \cite{qinzhanglp2022}).  Suppose $\mathcal{A} \in \mathbb{C}^{n_1\times n_2 \times n_3}$  is a third-order complex tensor,
then
\begin{equation}
\label{circdiag}
  (F_{n_3} \otimes I_{n_1})  \textbf{circ}(\mathcal{A})  (F_{n_3}^\ast \otimes I_{n_2})=\begin{bmatrix}
 \widehat{A}^{(1)}   &  &  &  \\
     & \widehat{A}^{(2)}  &  & \\
     &   &\ddots   & \\
     &   &  & \widehat{A}^{(n_3)}\\
\end{bmatrix} = \textbf{diag}(\widehat{\mathcal{A}}) \in \mathbb{C}^{n_3n_1 \times n_3n_2},
\end{equation}
where  $\otimes$ denotes the Kronecker product, $\widehat{A}^{(i)}$  ($i=1, \cdots, n_3$) are the frontal slices of the tensor $\widehat{\mathcal{A}} \in \mathbb{C}^{n_1\times n_2 \times n_3}$  which is the result of  DFT of $\mathcal{A}$ along the third mode,
 $F_{n_3}$ is an $n_3 \times n_3$ unitary matrix defined by  $F_{n_3}(s,t) = \frac{1}{ \sqrt{n_3} } \omega ^{(s-1)(t-1)}, \ \omega = exp(-\frac{2\pi \mathbf{i}}{n_3} )$.

\begin{definition}
For the tensor $\mathcal{A}\in \mathbb{H}_c^{n_1\times n_2 \times n_3}$, the DFT 
$\widehat{\mathcal{A}}$ of $\mathcal{A} \in \mathbb{H}_c^{n_1\times n_2 \times n_3} $ along the third mode is defined as     
\[
\widehat{\mathcal{A}}(s,t,:)=\sqrt{n_3} F_{n_3} \mathcal{A}(s,t,:).
\]
%where $F_{n_3}$ is unitary and  $F_{n_3}(i,j) = \frac{1}{ \sqrt{n_3} } \omega ^{(i-1)(j-1)}, \ \omega = exp(-\frac{2\pi i}{n_3} )$.
\end{definition}
\iffalse
\begin{definition}
For the tensor $\mathcal{A} =\mathcal{A}_{(c),1} e_1+ \mathcal{A}_{(c),2} e_2 \in \mathbb{H}_c^{n_1\times n_2 \times n_3}$, the DFT of $\mathcal{A} \in \mathbb{H}_c^{n_1\times n_2 \times n_3} $ along the third mode is defined as \[
\widehat{\mathcal{A}}(s,t,:)=\widehat{\mathcal{A}}_{(c),1}(s,t,:)e_1+\widehat{\mathcal{A}}_{(c),1}(s,t,:)e_2
\]
where
\[
\widehat{\mathcal{A}}_{(c),1}(s,t,:)=\sqrt{n_3} F_{n_3} \mathcal{A}_{(c),1}(s,t,:),
\]
\[
\widehat{\mathcal{A}}_{(c),2}(s,t,:)=\sqrt{n_3} F_{n_3} \mathcal{A}_{(c),2}(s,t,:),
\]
or equivalently, the DFT of $\mathcal{A} \in \mathbb{H}_c^{n_1\times n_2 \times n_3} $ along the third mode is actually a tensor $\mathcal{\widehat{A}} \in \mathbb{H}_c^{n_1\times n_2 \times n_3} $ satisfying
\[
\widehat{\mathcal{A}}(s,t,:)=\sqrt{n_3} F_{n_3} \mathcal{A}(s,t,:).
\]
%where $F_{n_3}$ is unitary and  $F_{n_3}(i,j) = \frac{1}{ \sqrt{n_3} } \omega ^{(i-1)(j-1)}, \ \omega = exp(-\frac{2\pi i}{n_3} )$.
\end{definition}
\fi
As we can see that after the DFT, the resulting vector $\widehat{\mathcal{A}}(s,t,:)$ is  still a vector in $\mathbb{H}^{n_3}_c$.
We can verify the relation between $\mathcal{A}$ and $\widehat{\mathcal{A}}$ as follows.
\begin{equation}\label{hatA}
 \textbf{Vec}(\widehat{\mathcal{A}}) =\begin{bmatrix}
 \widehat{A}^{(1)}  \\
 \widehat{A}^{(2)} \\
 \vdots  \\
 \widehat{A}^{(n_3)} \\
\end{bmatrix}= \sqrt{n_3} (F_{n_3} \otimes I_{n_1}) \textbf{Vec}(\mathcal{A}).
\end{equation}

Now we extend $t$-product of two third-order real tensors  defined in \cite{kilmer2011} to Reduced biquaternions.

\begin{definition}(Ht-product)\label{defHt}
Let $\mathcal{A}=\mathcal{A}_{1,\mathbf{i}}+\mathbf{j} \mathcal{A}_{\mathbf{j},\mathbf{k}} \in \mathbb{H}_c^{n_1\times n_2 \times n_3}$ and $\mathcal{B} \in \mathbb{H}_c^{n_2\times n_4 \times n_3}$.   Then $Ht$-product of $\mathcal{A} $ and $\mathcal{B} $ is defined as
 \[
 \mathcal{A}\ast_{Ht} \mathcal{B}= \textbf{Fold}((\textbf{circ}(\mathcal{A}_{1,\mathbf{i}})+\mathbf{j} \textbf{circ}(\mathcal{A}_{\mathbf{j},\mathbf{k}}))  \textbf{Vec}(\mathcal{B}))
 \in \mathbb{H}_c^{n_1\times n_4 \times n_3}.
 \]
\end{definition}

  The addition and scalar multiplication rules of tensors over $\mathbb{H}_c$ are  defined in a usual way.

\begin{theorem}
\label{Htthm}
Let $\mathcal{A} \in \mathbb{H}_c^{n_1\times n_2 \times n_3}, \ \mathcal{B} \in \mathbb{H}_c^{n_2\times n_4 \times n_3}$ and $\mathcal{C} \in \mathbb{H}_c^{n_1\times n_4 \times n_3}$ with the corresponding DFTs $\widehat{\mathcal{A}}, \  \widehat{\mathcal{B}}$ and $ \widehat{\mathcal{C}}$.
Then $\mathcal{A}\ast_{Ht} \mathcal{B}=\mathcal{C}$  if and only if
$\textbf{diag}(\widehat{\mathcal{A}})  \textbf{diag}(\widehat{\mathcal{B}})= \textbf{diag}(\widehat{\mathcal{C}}).$
\end{theorem}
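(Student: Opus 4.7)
The plan is to unfold the Ht-product into a block-circulant linear system on vectorized data, and then conjugate by the normalized DFT matrix to diagonalize. The splitting $\mathcal{A}=\mathcal{A}_{1,\mathbf{i}}+\mathbf{j}\mathcal{A}_{\mathbf{j},\mathbf{k}}$ of a reduced biquaternion tensor into two complex tensors lets me apply the complex block-circulant diagonalization identity (\ref{circdiag}) to each complex piece; because $\mathbb{H}_c$ is commutative, the scalar $\mathbf{j}$ passes freely through every complex matrix that appears, so the two pieces reassemble cleanly.

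First I would use the definition of $\ast_{Ht}$ to rewrite $\mathcal{A}\ast_{Ht}\mathcal{B}=\mathcal{C}$ as the vector identity
\[
(\textbf{circ}(\mathcal{A}_{1,\mathbf{i}})+\mathbf{j}\,\textbf{circ}(\mathcal{A}_{\mathbf{j},\mathbf{k}}))\,\textbf{Vec}(\mathcal{B}) = \textbf{Vec}(\mathcal{C}).
\]
Then I would left-multiply by $F_{n_3}\otimes I_{n_1}$ and insert $(F_{n_3}^\ast\otimes I_{n_2})(F_{n_3}\otimes I_{n_2})=I$ between the circulant and $\textbf{Vec}(\mathcal{B})$. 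Applying (\ref{circdiag}) to each of the complex circulants yields $\textbf{diag}(\widehat{\mathcal{A}}_{1,\mathbf{i}})$ and $\textbf{diag}(\widehat{\mathcal{A}}_{\mathbf{j},\mathbf{k}})$, whose combination with $\mathbf{j}$ is exactly $\textbf{diag}(\widehat{\mathcal{A}})$, because the fiber-wise definition of the DFT together with $\mathbb{R}$-linearity gives $\widehat{\mathcal{A}}=\widehat{\mathcal{A}}_{1,\mathbf{i}}+\mathbf{j}\,\widehat{\mathcal{A}}_{\mathbf{j},\mathbf{k}}$. Using relation (\ref{hatA}) to convert the remaining $(F_{n_3}\otimes I)\textbf{Vec}(\cdot)$ blocks into $\tfrac{1}{\sqrt{n_3}}\textbf{Vec}(\widehat{\cdot})$, the original identity reduces to $\textbf{diag}(\widehat{\mathcal{A}})\,\textbf{Vec}(\widehat{\mathcal{B}}) = \textbf{Vec}(\widehat{\mathcal{C}})$.

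Finally, reading this equation block by block gives $\widehat{A}^{(i)}\widehat{B}^{(i)}=\widehat{C}^{(i)}$ for every $i=1,\dots,n_3$, which is precisely the condition that the block-diagonal product $\textbf{diag}(\widehat{\mathcal{A}})\,\textbf{diag}(\widehat{\mathcal{B}})$ equals $\textbf{diag}(\widehat{\mathcal{C}})$. Each intermediate step is an equivalence (conjugation by the unitary $F_{n_3}\otimes I$ is invertible, and the block/vector correspondence is bijective), so the biconditional follows. I expect the one genuinely delicate point to be the legitimacy of pushing the complex-case identity (\ref{circdiag}) piecewise past the scalar $\mathbf{j}$; this reduces to the observation that $\mathbf{j}$ commutes with every complex scalar in $\mathbb{H}_c$, after which the argument is formally the same as in the complex tensor case of Kilmer--Martin.
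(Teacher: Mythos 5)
Your proposal is correct and follows essentially the same route as the paper: split $\mathcal{A}$ into its two complex parts, diagonalize each block circulant via the identity (\ref{circdiag}), use the commutation of $\mathbf{j}$ with $F_{n_3}$ to reassemble $\textbf{diag}(\widehat{\mathcal{A}})$, and convert the remaining DFT--vectorization factors through (\ref{hatA}) to reach $\textbf{diag}(\widehat{\mathcal{A}})\,\textbf{Vec}(\widehat{\mathcal{B}})=\textbf{Vec}(\widehat{\mathcal{C}})$. Your explicit remark that every step is an invertible transformation (so the biconditional follows) is a small clarity improvement over the paper, which argues only the forward direction in detail.
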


\begin{proof}
Since $\widehat{\mathcal{A}}$ is the DFT of $ \mathcal{A}=\mathcal{A}_{1,\mathbf{i}}+ \mathbf{j} \mathcal{A}_{\mathbf{j},\mathbf{k}}$ along the third mode,
it follows from (\ref{hatA}) and $F_{n_3} \mathbf{j}=\mathbf{j}F_{n_3}$  that
\begin{align*}
  \textbf{Vec}(\widehat{\mathcal{A}}) & = \sqrt{n_3} (F_{n_3} \otimes I_{n_1}) ( \textbf{Vec}(\mathcal{A}_{1,\mathbf{i}}) +\mathbf{j} \textbf{Vec}(\mathcal{A}_{\mathbf{j},\mathbf{k}}) )\\
   %& = \sqrt{n_3} (F_{n_3} \otimes I_{n_1})  \textbf{Vec}(\mathcal{A}_{1,i}) +\mathbf{j}  \sqrt{n_3} (F_{n_3} \otimes I_{n_1}) \textbf{Vec}(\mathcal{A}_{\mathbf{j},\mathbf{k}})  \\
   & = \textbf{Vec}(\widehat{\mathcal{A}}_{1,\mathbf{i}})+\mathbf{j}\textbf{Vec}(\widehat{\mathcal{A}}_{\mathbf{j},\mathbf{k}}),
\end{align*}
which implies
\begin{equation}
\label{diag1}
  \textbf{diag}(\widehat{\mathcal{A}})= \textbf{diag}(\widehat{\mathcal{A}}_{1,\mathbf{i}})+\mathbf{j} \textbf{diag}(\widehat{\mathcal{A}}_{\mathbf{j},\mathbf{k}}).
\end{equation}
By Definition \ref{defHt}
and the identities  (\ref{circdiag}) and (\ref{hatA}),  we  have
\begin{align*}
\textbf{Vec}(\mathcal{C})& = \textbf{Vec}(\mathcal{A}\ast_{Ht} \mathcal{B}) \\
& = (\textbf{circ}(\mathcal{A}_{1,\mathbf{i}})+\mathbf{j} \textbf{circ}(\mathcal{A}_{\mathbf{j},\mathbf{k}}))  \textbf{Vec}(\mathcal{B})\\
& =(F_{n_3}^\ast \otimes I_{n_1})[ \textbf{diag}(\widehat{\mathcal{A}}_{1,\mathbf{i}})  + \mathbf{j} \textbf{diag}(\widehat{\mathcal{A}}_{\mathbf{j},\mathbf{k}})  ]  (F_{n_3} \otimes I_{n_2})  \textbf{Vec}(\mathcal{B})\\
& =\frac{1}{\sqrt{n_3}} (F_{n_3}^\ast \otimes I_{n_1})
[ \textbf{diag}(\widehat{\mathcal{A}}_{1,\mathbf{i}})  + \mathbf{j} \textbf{diag}(\widehat{\mathcal{A}}_{\mathbf{j},\mathbf{k}}) ]   \textbf{Vec}(\widehat{\mathcal{B}}).
\end{align*}
Thus
\[
\textbf{Vec}(\widehat{\mathcal{C}})= \sqrt{n_3} (F_{n_3} \otimes I_{n_1}) \textbf{Vec}(\mathcal{C})= [ \textbf{diag}(\widehat{\mathcal{A}}_{1,\mathbf{i}})  + \mathbf{j} \textbf{diag}(\widehat{\mathcal{A}}_{\mathbf{j},\mathbf{k}})  ]   \textbf{Vec}(\widehat{\mathcal{B}}).
\]
Combing with equation (\ref{diag1})  yields
 $\textbf{Vec}(\widehat{\mathcal{C}})= \textbf{diag}(\widehat{\mathcal{A}})  \textbf{Vec}(\widehat{\mathcal{B}})$.
 Therefore,  $$ \textbf{diag}(\widehat{\mathcal{C}})= \textbf{diag}(\widehat{\mathcal{A}})  \textbf{diag}(\widehat{\mathcal{B}}).$$
\end{proof}

According to the above result, we can give an equivalent  definition of  the Ht-product of two third-order reduced biquaternion tensors.
\begin{definition}
\label{def2Ht}
Let $\mathcal{A} \in \mathbb{H}_c^{n_1\times n_2 \times n_3}$ and $\mathcal{B} \in \mathbb{H}_c^{n_2\times n_4 \times n_3}$.   The $Ht$-product of $\mathcal{A} $ and $\mathcal{B} $ is defined as
 \[
 \mathcal{A}\ast_{Ht} \mathcal{B}= {\textbf{Fold}}\left(  \frac{1}{\sqrt{n_3}} (F_{n_3}^\ast \otimes I_{n_1})  \textbf{diag}(\widehat{\mathcal{A}})  \textbf{diag}(\widehat{\mathcal{B}})  (e \otimes I_{n_4})  \right),
 \]
 where $e$ is an $n_3 \times 1$   vector with all the entries equal to 1.
\end{definition}

This Ht-product obeys some usual algebraic operation laws,  such as the associative law and the distributive law.

\begin{proposition}
\label{Htassocia}
Let $\mathcal{A} \in \mathbb{H}_c^{n_1\times n_2 \times n_3}, \ \mathcal{B} \in \mathbb{H}_c^{n_2\times n_4 \times n_3}$ and $\mathcal{C} \in \mathbb{H}_c^{n_4\times n_5 \times n_3}$. Then
\[
(\mathcal{A}\ast_{Ht} \mathcal{B}) \ast_{Ht} \mathcal{C}=\mathcal{A}\ast_{Ht} (\mathcal{B} \ast_{Ht} \mathcal{C}).
\]
\end{proposition}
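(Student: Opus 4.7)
The plan is to reduce associativity of $\ast_{Ht}$ to the associativity of ordinary matrix multiplication, which holds automatically over the associative algebra $\mathbb{H}_c$. The bridge is Theorem \ref{Htthm}: it identifies the $Ht$-product $\mathcal{U}\ast_{Ht}\mathcal{V}$ with the ordinary product of block-diagonal reduced biquaternion matrices $\textbf{diag}(\widehat{\mathcal{U}})\,\textbf{diag}(\widehat{\mathcal{V}})$ under the transform $\mathcal{W}\mapsto \textbf{diag}(\widehat{\mathcal{W}})$. Once this bridge is in place, the three-factor identity falls out of the corresponding three-factor matrix identity without any further computation.

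Concretely, I would set $\mathcal{D}_L = (\mathcal{A}\ast_{Ht}\mathcal{B})\ast_{Ht}\mathcal{C}$ and $\mathcal{D}_R = \mathcal{A}\ast_{Ht}(\mathcal{B}\ast_{Ht}\mathcal{C})$, and apply Theorem \ref{Htthm} twice on each side. For the left-hand side, one first converts the inner $\mathcal{A}\ast_{Ht}\mathcal{B}$ and then the outer product, obtaining
\[
\textbf{diag}(\widehat{\mathcal{D}_L}) = \textbf{diag}(\widehat{\mathcal{A}\ast_{Ht}\mathcal{B}})\,\textbf{diag}(\widehat{\mathcal{C}}) = \bigl(\textbf{diag}(\widehat{\mathcal{A}})\,\textbf{diag}(\widehat{\mathcal{B}})\bigr)\,\textbf{diag}(\widehat{\mathcal{C}}).
\]
Symmetrically, for the right-hand side one gets
\[
\textbf{diag}(\widehat{\mathcal{D}_R}) = \textbf{diag}(\widehat{\mathcal{A}})\,\bigl(\textbf{diag}(\widehat{\mathcal{B}})\,\textbf{diag}(\widehat{\mathcal{C}})\bigr).
\]
Since matrix multiplication over the associative algebra $\mathbb{H}_c$ is associative, the two right-hand sides coincide, and hence $\textbf{diag}(\widehat{\mathcal{D}_L}) = \textbf{diag}(\widehat{\mathcal{D}_R})$.

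To close the argument I would observe that the correspondence $\mathcal{W}\mapsto \textbf{diag}(\widehat{\mathcal{W}})$ is injective: reading the diagonal blocks recovers the frontal slices of $\widehat{\mathcal{W}}$, and applying the inverse DFT along the third mode (available because $F_{n_3}$ is unitary) recovers $\mathcal{W}$ itself. Consequently $\mathcal{D}_L = \mathcal{D}_R$. I do not expect any serious obstacle; the only subtlety worth flagging is that although $\mathbb{H}_c$ has zero divisors, it is still an associative algebra, so matrix multiplication on $\mathbb{H}_c^{N\times N}$ is associative entry-by-entry without invoking cancellation or invertibility. In particular, the $\textbf{j}$-decomposition used in Definition \ref{defHt} plays no role in the argument once one passes to the $\textbf{diag}(\widehat{\,\cdot\,})$ picture supplied by Theorem \ref{Htthm}.
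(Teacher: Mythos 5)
Your proposal is correct and takes essentially the same route as the paper: both arguments pass to the block-diagonal DFT domain and invoke associativity of matrix multiplication over $\mathbb{H}_c$, the paper doing so via the explicit $\textbf{Fold}$ formula of Definition \ref{def2Ht} while you use Theorem \ref{Htthm} together with the (correctly justified) injectivity of $\mathcal{W}\mapsto \textbf{diag}(\widehat{\mathcal{W}})$. The difference is cosmetic.
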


\begin{proof}
By Definition \ref{def2Ht}, we have
\begin{align*}
  (\mathcal{A}\ast_{Ht} \mathcal{B}) \ast_{Ht} \mathcal{C} = & \textbf{Fold}( \frac{1}{\sqrt{n_3}} (F_{n_3}^\ast \otimes I_{n_1})  \textbf{diag}(\widehat{\mathcal{A}})  \textbf{diag}(\widehat{\mathcal{B}})  (e \otimes I_{n_4})  )   \ast_{Ht} \mathcal{C} \\
  = & \textbf{Fold}( \frac{1}{\sqrt{n_3}} (F_{n_3}^\ast \otimes I_{n_1})  (\textbf{diag}(\widehat{\mathcal{A}})  \textbf{diag}(\widehat{\mathcal{B}}))   \textbf{diag}(\widehat{\mathcal{C}})   (e \otimes I_{n_5})  ),
\end{align*}
and
\begin{align*}
  \mathcal{A}\ast_{Ht} (\mathcal{B} \ast_{Ht} \mathcal{C}) =  & \mathcal{A}\ast_{Ht}  \textbf{Fold}(\frac{1}{\sqrt{n_3}} (F_{n_3}^\ast \otimes I_{n_2})   \textbf{diag}(\widehat{\mathcal{B}}) \textbf{diag}(\widehat{\mathcal{C}})  (e \otimes I_{n_5})    ) \\
  = & \textbf{Fold} (\frac{1}{\sqrt{n_3}} (F_{n_3}^\ast \otimes I_{n_1})    \textbf{diag}(\widehat{\mathcal{A}})   (\textbf{diag}(\widehat{\mathcal{B}}) \textbf{diag}(\widehat{\mathcal{C}}))  (e \otimes I_{n_5})  ).
\end{align*}
Notice that $ \textbf{diag}(\widehat{\mathcal{A}}), \textbf{diag}(\widehat{\mathcal{B}}) $ and $\textbf{diag}(\widehat{\mathcal{C}})$ are matrices over $\mathbb{H}_c$. Hence the associative law holds, and thus the result holds.
\end{proof}

In a similar way as in Proposition \ref{Htassocia}, we have the following distributive laws. For brevity, we omit the proofs.
 \begin{proposition}
\label{Htdistr}
Let $\mathcal{A} \in \mathbb{H}_c^{n_1\times n_2 \times n_3}, \ \mathcal{B}, \  \mathcal{C} \in \mathbb{H}_c^{n_2\times n_4 \times n_3}$    %, \  $\mathcal{C} \in \mathbb{H}_c^{n_2\times n_4 \times n_3}$
and  $\mathcal{D} \in \mathbb{H}_c^{n_4\times n_5 \times n_3}$. Then
\[
 \mathcal{A} \ast_{Ht} ( \mathcal{B} + \mathcal{C} ) =\mathcal{A}\ast_{Ht} \mathcal{B} +\mathcal{A} \ast_{Ht} \mathcal{C},
\]
\[
  ( \mathcal{B} + \mathcal{C} ) \ast_{Ht} \mathcal{D} =\mathcal{B}\ast_{Ht} \mathcal{D} +\mathcal{C} \ast_{Ht} \mathcal{D}.
\]
\end{proposition}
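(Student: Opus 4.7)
The plan is to follow the same strategy used in the proof of Proposition \ref{Htassocia}: push each Ht-product through Definition \ref{def2Ht} into an expression built from block-diagonal DFT matrices, and then reduce the desired identity to the familiar distributive law for matrix multiplication over $\mathbb{H}_c$.

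First I would record a linearity fact for the DFT along the third mode. Because the defining relation $\widehat{\mathcal{A}}(s,t,:)=\sqrt{n_3}\,F_{n_3}\,\mathcal{A}(s,t,:)$ is linear in $\mathcal{A}$, it follows at once that $\widehat{\mathcal{B}+\mathcal{C}}=\widehat{\mathcal{B}}+\widehat{\mathcal{C}}$ whenever $\mathcal{B}$ and $\mathcal{C}$ have matching dimensions. Reading off the frontal slices and assembling them via \ref{hatdiag} immediately yields
\[
\textbf{diag}(\widehat{\mathcal{B}+\mathcal{C}})=\textbf{diag}(\widehat{\mathcal{B}})+\textbf{diag}(\widehat{\mathcal{C}}).
\]

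Next, for the left distributive law I would apply Definition \ref{def2Ht} to the left-hand side, substitute the identity above for $\textbf{diag}(\widehat{\mathcal{B}+\mathcal{C}})$, and then invoke the ordinary distributive law for matrix multiplication over $\mathbb{H}_c$ to split
\[
\textbf{diag}(\widehat{\mathcal{A}})\bigl(\textbf{diag}(\widehat{\mathcal{B}})+\textbf{diag}(\widehat{\mathcal{C}})\bigr)=\textbf{diag}(\widehat{\mathcal{A}})\,\textbf{diag}(\widehat{\mathcal{B}})+\textbf{diag}(\widehat{\mathcal{A}})\,\textbf{diag}(\widehat{\mathcal{C}}).
\]
Since $\textbf{Fold}$, the left multiplication by $(F_{n_3}^\ast\otimes I_{n_1})$, and the right multiplication by $(e\otimes I_{n_4})$ are all linear in their argument, the resulting sum is exactly $\mathcal{A}\ast_{Ht}\mathcal{B}+\mathcal{A}\ast_{Ht}\mathcal{C}$ by Definition \ref{def2Ht}. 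The right distributive law is proved by the mirror-image argument: expand $(\mathcal{B}+\mathcal{C})\ast_{Ht}\mathcal{D}$ through Definition \ref{def2Ht}, apply additivity of $\textbf{diag}\circ\,\widehat{\cdot}\,$ on the first factor, and distribute on the right.

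There is no real obstacle beyond routine bookkeeping: the entire argument is powered by linearity of the DFT and of the maps $\textbf{Vec}$, $\textbf{Fold}$, $\textbf{diag}$, together with distributivity of matrix multiplication over $\mathbb{H}_c$. The only point to watch is to keep the factors $(F_{n_3}^\ast\otimes I_{n_1})$ on the left and $(e\otimes I_{n_4})$ or $(e\otimes I_{n_5})$ on the right throughout, just as in Proposition \ref{Htassocia}, so that $\textbf{Fold}$ recovers the intended tensor on each side of the equation.
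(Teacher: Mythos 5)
Your proof is correct and follows exactly the route the paper intends: the paper omits the proof of this proposition, stating only that it is analogous to Proposition \ref{Htassocia}, and your argument via Definition \ref{def2Ht}, linearity of the DFT and of $\textbf{diag}$, and distributivity of matrix multiplication over $\mathbb{H}_c$ is precisely that analogue. The bookkeeping of the factors $(F_{n_3}^\ast\otimes I_{n_1})$ versus $(F_{n_3}^\ast\otimes I_{n_2})$ and $(e\otimes I_{n_4})$ versus $(e\otimes I_{n_5})$ is handled correctly.
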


 Suppose $\mathcal{A} \in \mathbb{C}^{n_1 \times n_2 \times n_3}$. Using a similar approach in \cite{qinzhanglp2022}, we calculate the conjugate transpose $\mathcal{A}^\ast \in \mathbb{C}^{n_2 \times n_1 \times n_3}$ of $\mathcal{A}$
 as follows:  conjugately transposing each frontal slice of $\mathcal{A}$ first, and   reversing the order of conjugately transposed frontal slices $2$ through $n_3$ next.

\begin{definition}
\label{defHtconjtr}
The conjugate transpose $\mathcal{A}^\ast $ of  $\mathcal{A}= \mathcal{A}_{1,\mathbf{i}}+\mathbf{j} \mathcal{A}_{j,k} \in \mathbb{H}_c^{n_1\times n_2 \times n_3}$ with $\mathcal{A}_{1,\mathbf{i}},  \mathcal{A}_{\mathbf{j},\mathbf{k}} \in \mathbb{C}^{n_1 \times n_2 \times n_3}$ is  defined as   $\mathcal{A}^\ast= \mathcal{A}_{1,\mathbf{i}}^\ast+j \mathcal{A}_{\mathbf{j},\mathbf{k}}^\ast$ with $\mathcal{A}_{1,\mathbf{i}}^\ast, \mathcal{A}_{\mathbf{j},\mathbf{k}}^\ast \in \mathbb{C}^{n_2 \times n_1 \times n_3}$.
\end{definition}

It is not difficult to verify that $(\mathcal{A}^\ast)^\ast=\mathcal{A}$ and $(\mathcal{A}+ \mathcal{B} )^\ast=\mathcal{A}^\ast +\mathcal{B}^\ast $ for $\mathcal{A}, \mathcal{B}  \in \mathbb{H}_c^{n_1\times n_2 \times n_3}$.

\begin{definition}\label{defHtident}
The   identity   tensor  $\mathcal{I}_{nnm} \in \mathbb{H}_c^{n\times n\times m}$  (simply denoted by  $\mathcal{I}$) is the tensor in which the first frontal slice is $I_n$  and the other slice matrices are zero.
\end{definition}

We can obtain that $\mathcal{I} \ast_{Ht} \mathcal{I}=\mathcal{I}$,\  $\mathcal{I}^\ast=\mathcal{I}$  and $\mathcal{A}  \ast_{Ht}   \mathcal{I}  = \mathcal{I} \ast_{Ht} \mathcal{A}=  \mathcal{A}.$

 %%%%
 \iffalse
\begin{proposition}
\label{HtI}
Let $\mathcal{A} \in \mathbb{H}_c^{n \times n  \times m}$.
Then $\mathcal{A}  \ast_{Ht}   \mathcal{I}  = \mathcal{I} \ast_{Ht} \mathcal{A}=  \mathcal{A}.$
 % $\mathcal{A}  \ast_{Ht}   \mathcal{I}_{nnl}  = \mathcal{I}_{nnl} \ast_{Ht} \mathcal{A}=  \mathcal{A}.$
\end{proposition}
\fi
%%%%

\begin{definition}
\label{defHtinv}
A   tensor  $\mathcal{A} \in \mathbb{H}_c^{n \times n \times m}$ is called  invertible if there is a  tensor  $\mathcal{B}  \in \mathbb{H}_c^{n \times n \times m}$ such that
$ \mathcal{A} \ast_{Ht} \mathcal{B}=\mathcal{I}$  and
$  \mathcal{B} \ast_{Ht} \mathcal{A}=\mathcal{I}$.
%$ \mathcal{A} \ast_{Ht} \mathcal{B}=\mathcal{I}_{nnl}$  and $  \mathcal{B} \ast_{Ht} \mathcal{A}=\mathcal{I}_{nnl}$.
\end{definition}

By using  Proposition \ref{Htassocia}  and  the fact $\mathcal{A}  \ast_{Ht}   \mathcal{I}  = \mathcal{I} \ast_{Ht} \mathcal{A}=  \mathcal{A}$, we have the following result.

\begin{proposition}
\label{Htinv}
 The inverse of  a  reduced  biquaternion tensor $\mathcal{A} \in \mathbb{H}_c^{n \times n \times m}$  is unique if it exists, denoted by $\mathcal{A}^{-1}$.\end{proposition}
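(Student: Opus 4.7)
The plan is to run the classical monoid-theoretic uniqueness argument, using only the two algebraic facts explicitly cited by the authors just above the statement: the associativity of the Ht-product (Proposition \ref{Htassocia}) and the two-sided identity property $\mathcal{A} \ast_{Ht} \mathcal{I} = \mathcal{I} \ast_{Ht} \mathcal{A} = \mathcal{A}$. No structural properties of $\mathbb{H}_c$ or of the DFT are needed; everything reduces to manipulating an associative binary operation with a two-sided unit.

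Concretely, I would assume that $\mathcal{B}_1, \mathcal{B}_2 \in \mathbb{H}_c^{n \times n \times m}$ both satisfy Definition \ref{defHtinv} with respect to $\mathcal{A}$, i.e.\ $\mathcal{A} \ast_{Ht} \mathcal{B}_i = \mathcal{I}$ and $\mathcal{B}_i \ast_{Ht} \mathcal{A} = \mathcal{I}$ for $i=1,2$. Then I would chain equalities:
\[
\mathcal{B}_1 \;=\; \mathcal{B}_1 \ast_{Ht} \mathcal{I} \;=\; \mathcal{B}_1 \ast_{Ht} (\mathcal{A} \ast_{Ht} \mathcal{B}_2) \;=\; (\mathcal{B}_1 \ast_{Ht} \mathcal{A}) \ast_{Ht} \mathcal{B}_2 \;=\; \mathcal{I} \ast_{Ht} \mathcal{B}_2 \;=\; \mathcal{B}_2,
\]
where the first and last equalities use the identity property, the second and fourth use the inverse relations for $\mathcal{B}_2$ and $\mathcal{B}_1$ respectively, and the middle equality is Proposition \ref{Htassocia}. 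This forces $\mathcal{B}_1 = \mathcal{B}_2$, justifying the notation $\mathcal{A}^{-1}$.

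There is no real obstacle here: the argument is essentially the one-line proof that inverses in a monoid are unique, and the authors have already assembled the exact two ingredients (associativity and two-sided identity) immediately before the statement. The only mild care needed is to invoke the two-sided version of Definition \ref{defHtinv} (both $\mathcal{A} \ast_{Ht} \mathcal{B}_i = \mathcal{I}$ and $\mathcal{B}_i \ast_{Ht} \mathcal{A} = \mathcal{I}$) so that the chain can splice in $\mathcal{A} \ast_{Ht} \mathcal{B}_2$ on one side and collapse $\mathcal{B}_1 \ast_{Ht} \mathcal{A}$ on the other.
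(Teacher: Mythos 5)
Your proof is correct and is precisely the argument the paper intends: the authors state the result follows from Proposition \ref{Htassocia} and the identity property $\mathcal{A}\ast_{Ht}\mathcal{I}=\mathcal{I}\ast_{Ht}\mathcal{A}=\mathcal{A}$, leaving the standard monoid chain $\mathcal{B}_1=\mathcal{B}_1\ast_{Ht}(\mathcal{A}\ast_{Ht}\mathcal{B}_2)=(\mathcal{B}_1\ast_{Ht}\mathcal{A})\ast_{Ht}\mathcal{B}_2=\mathcal{B}_2$ implicit. You have simply written out the omitted details.
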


%It can be shown that the inverse of $\mathcal{A} \in \mathbb{H}_c^{n \times n \times l}$  is unique if it exists. {\color{red} Proof??? use Corollary 2.13}

\begin{definition}
\label{defHtidemp}
Let $\mathcal{A} \in \mathbb{H}_c^{n \times n \times m}$.  $\mathcal{A} $ is called idempotent  if
$ \mathcal{A} \ast_{Ht} \mathcal{A}=\mathcal{A}$;
  $ \mathcal{A} $ is called unitary (resp. Hermitian) if
$ \mathcal{A} \ast_{Ht} {\mathcal{A}}^\ast={\mathcal{A}}^\ast  \ast_{Ht} \mathcal{A} =\mathcal{I}  \ (resp. \ \mathcal{A}={\mathcal{A}}^\ast ).$
%$ \mathcal{A} \ast_{Ht} {\mathcal{A}}^\ast={\mathcal{A}}^\ast  \ast_{Ht} \mathcal{A} =\mathcal{I}_{nnl}  \ (resp. \mathcal{A}={\mathcal{A}}^\ast ).$
\end{definition}

\begin{lemma}
\label{Ctp01}
Let $\mathcal{A} \in \mathbb{C}^{n_1\times n_2 \times n_3}$. Then $ \textbf{diag}(  \widehat{ {\mathcal{A}}^\ast  })= (\textbf{diag}(\widehat{\mathcal{A}}))^\ast . $
\end{lemma}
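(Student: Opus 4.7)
The plan is to reduce the claim to a clean matrix identity about block circulants, namely $(\textbf{circ}(\mathcal{A}))^\ast = \textbf{circ}(\mathcal{A}^\ast)$, and then apply the Fourier diagonalization identity (\ref{circdiag}) twice.

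First I would verify the block-circulant identity $(\textbf{circ}(\mathcal{A}))^\ast = \textbf{circ}(\mathcal{A}^\ast)$ directly from the definitions. The $(i,j)$ block of $\textbf{circ}(\mathcal{A})$ is $A^{(((i-j)\bmod n_3)+1)}$, so the $(i,j)$ block of $(\textbf{circ}(\mathcal{A}))^\ast$ is $(A^{(((j-i)\bmod n_3)+1)})^\ast$. On the other hand, Definition \ref{defHtconjtr} (specialised to the complex case, where $\mathcal{A}^\ast$ is obtained by conjugate-transposing every frontal slice and then reversing the order of slices $2$ through $n_3$) says that the $k$-th frontal slice of $\mathcal{A}^\ast$ equals $(A^{(1)})^\ast$ when $k=1$ and $(A^{(n_3-k+2)})^\ast$ when $k\ge 2$. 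A short index check (for $i=j$ both give $(A^{(1)})^\ast$; for $i\neq j$ one has $(j-i)\bmod n_3+1=n_3-(i-j)+1$ when written in the range $2,\dots,n_3$) shows the two blocks agree. This is the step where the otherwise-mysterious ``reverse slices 2 through $n_3$'' convention pays off.

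Next I would apply the diagonalization identity (\ref{circdiag}) to $\mathcal{A}$:
\[
(F_{n_3}\otimes I_{n_1})\,\textbf{circ}(\mathcal{A})\,(F_{n_3}^\ast\otimes I_{n_2}) \;=\; \textbf{diag}(\widehat{\mathcal{A}}),
\]
take conjugate transpose on both sides (using $(X\otimes Y)^\ast = X^\ast\otimes Y^\ast$ and the fact that $F_{n_3}$ is unitary over $\mathbb{C}$), and obtain
\[
(F_{n_3}\otimes I_{n_2})\,(\textbf{circ}(\mathcal{A}))^\ast\,(F_{n_3}^\ast\otimes I_{n_1}) \;=\; (\textbf{diag}(\widehat{\mathcal{A}}))^\ast.
\]
Substituting the circulant identity from the previous paragraph, the left-hand side becomes $(F_{n_3}\otimes I_{n_2})\,\textbf{circ}(\mathcal{A}^\ast)\,(F_{n_3}^\ast\otimes I_{n_1})$, which by a second application of (\ref{circdiag}) — now to the tensor $\mathcal{A}^\ast\in\mathbb{C}^{n_2\times n_1\times n_3}$ — equals $\textbf{diag}(\widehat{\mathcal{A}^\ast})$. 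Comparing the two expressions yields the claimed identity.

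The one potentially delicate point is the slice-reversal bookkeeping in the first step; everything else is a mechanical transposition of known identities. I would therefore spend most of the written proof on that index verification, and keep the Fourier part to two lines.
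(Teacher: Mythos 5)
Your proposal is correct and follows essentially the same route as the paper: establish $(\textbf{circ}(\mathcal{A}))^\ast=\textbf{circ}(\mathcal{A}^\ast)$ and then pass through the diagonalization identity (\ref{circdiag}); the paper simply asserts the circulant identity as ``easy to see,'' whereas you supply the slice-reversal index check, and your conjugate-transpose-both-sides phrasing of the Fourier step is equivalent to the paper's direct chain of equalities. No gaps.
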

\begin{proof}
It is easy to see   that  $ \textbf{circ}({\mathcal{A}}^\ast )= (\textbf{circ}({\mathcal{A}}))^\ast$ for $\mathcal{A} \in \mathbb{C}^{n_1\times n_2 \times n_3}.$
According to the identity (\ref{circdiag}), we have
\begin{align*}
  \textbf{diag}(\widehat{ {\mathcal{A}}^\ast  })  & =(F_{n_3} \otimes I_{n_2})  \textbf{circ}({\mathcal{A}}^\ast ) (F_{n_3}^\ast \otimes I_{n_1}) \\
   & =(F_{n_3} \otimes I_{n_2})  (\textbf{circ}({\mathcal{A}}))^\ast   (F_{n_3}^\ast \otimes I_{n_1}) \\
   %& =( (F_{n_3} \otimes I_{n_1})  \textbf{circ}({\mathcal{A}})   (F_{n_3}^\ast \otimes I_{n_2}) )^\ast \\
   &= ( \textbf{diag}(\widehat{ \mathcal{A} }) )^\ast.
\end{align*}
\end{proof}

The above result over $\mathbb{C}$ can be extended to $\mathbb{H}_c$ as follows.

\begin{proposition}
\label{Htp02}
Let $\mathcal{A} \in \mathbb{H}_c^{n_1\times n_2 \times n_3}$. Then $ \textbf{diag}(  \widehat{ {\mathcal{A}}^\ast  })= (\textbf{diag}(\widehat{\mathcal{A}}))^\ast . $
\end{proposition}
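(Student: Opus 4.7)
The plan is to reduce the reduced biquaternion claim to Lemma \ref{Ctp01} by splitting $\mathcal{A}$ into its complex components. Write $\mathcal{A} = \mathcal{A}_{1,\mathbf{i}} + \mathbf{j}\mathcal{A}_{\mathbf{j},\mathbf{k}}$ with $\mathcal{A}_{1,\mathbf{i}}, \mathcal{A}_{\mathbf{j},\mathbf{k}} \in \mathbb{C}^{n_1 \times n_2 \times n_3}$. By Definition \ref{defHtconjtr}, $\mathcal{A}^\ast = \mathcal{A}_{1,\mathbf{i}}^\ast + \mathbf{j}\mathcal{A}_{\mathbf{j},\mathbf{k}}^\ast$, where the two summands lie in $\mathbb{C}^{n_2 \times n_1 \times n_3}$.

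Next, I would apply the DFT along the third mode. Since the DFT is linear over $\mathbb{R}$ and $F_{n_3}\mathbf{j} = \mathbf{j}F_{n_3}$ (as already exploited in the proof of Theorem \ref{Htthm}), the DFT distributes over the $e_1/e_2$-style splitting into complex parts. Consequently $\widehat{\mathcal{A}^\ast} = \widehat{\mathcal{A}_{1,\mathbf{i}}^\ast} + \mathbf{j}\widehat{\mathcal{A}_{\mathbf{j},\mathbf{k}}^\ast}$, and the $\textbf{diag}$ operator acts slicewise, so
\[
\textbf{diag}(\widehat{\mathcal{A}^\ast}) = \textbf{diag}(\widehat{\mathcal{A}_{1,\mathbf{i}}^\ast}) + \mathbf{j}\,\textbf{diag}(\widehat{\mathcal{A}_{\mathbf{j},\mathbf{k}}^\ast}).
\]
Now Lemma \ref{Ctp01} applies to each complex tensor individually, giving $\textbf{diag}(\widehat{\mathcal{A}_{1,\mathbf{i}}^\ast}) = (\textbf{diag}(\widehat{\mathcal{A}_{1,\mathbf{i}}}))^\ast$ and similarly for the $(\mathbf{j},\mathbf{k})$-part.

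Finally, I would compare this to $(\textbf{diag}(\widehat{\mathcal{A}}))^\ast$. By identity (\ref{diag1}), $\textbf{diag}(\widehat{\mathcal{A}}) = \textbf{diag}(\widehat{\mathcal{A}_{1,\mathbf{i}}}) + \mathbf{j}\,\textbf{diag}(\widehat{\mathcal{A}_{\mathbf{j},\mathbf{k}}})$, and by the RB conjugate transpose rule $(B_1 + \mathbf{j}B_2)^\ast = B_1^\ast + \mathbf{j}B_2^\ast$ recorded earlier in the paper, we obtain $(\textbf{diag}(\widehat{\mathcal{A}}))^\ast = (\textbf{diag}(\widehat{\mathcal{A}_{1,\mathbf{i}}}))^\ast + \mathbf{j}(\textbf{diag}(\widehat{\mathcal{A}_{\mathbf{j},\mathbf{k}}}))^\ast$, which matches the expression above, completing the proof.

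There is no real technical obstacle; the only thing to be slightly careful about is the bookkeeping that three operations commute with the decomposition $\,\cdot\, = (\cdot)_{1,\mathbf{i}} + \mathbf{j}(\cdot)_{\mathbf{j},\mathbf{k}}$: the DFT (because $F_{n_3}$ has real entries, hence commutes with $\mathbf{j}$), the $\textbf{diag}$ operator (because it is entrywise), and the conjugate transpose (by Definition \ref{defHtconjtr} and the matrix-level rule $B_1^\ast + \mathbf{j}B_2^\ast$). Once these three commutations are invoked, the lemma over $\mathbb{C}$ upgrades immediately to $\mathbb{H}_c$.
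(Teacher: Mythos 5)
Your proposal is correct and follows essentially the same route as the paper: both split $\mathcal{A}$ into its complex components $\mathcal{A}_{1,\mathbf{i}}+\mathbf{j}\mathcal{A}_{\mathbf{j},\mathbf{k}}$, use $F_{n_3}\mathbf{j}=\mathbf{j}F_{n_3}$ to push the DFT and $\textbf{diag}$ through the decomposition, apply Lemma \ref{Ctp01} to each complex piece, and match against $(\textbf{diag}(\widehat{\mathcal{A}}))^\ast$ via the RB conjugate-transpose rule. No meaningful differences.
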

\begin{proof}
Since  $ \widehat{ {\mathcal{A}}^\ast  } $ is the  DFT of ${\mathcal{A}}^\ast $ and $F_{n_3} \mathbf{j}=\mathbf{j}F_{n_3}$, it follows from (\ref{hatA}) that
\begin{align*}
  \textbf{Vec}(\widehat{ {\mathcal{A}}^\ast  }) & = \sqrt{n_3} (F_{n_3} \otimes I_{n_2}) \textbf{Vec}({\mathcal{A}}^\ast) \\
   & = \sqrt{n_3} (F_{n_3} \otimes I_{n_2})  ( \textbf{Vec}({\mathcal{A}}_{1,\mathbf{i}}^\ast) + \mathbf{j}  \textbf{Vec}({\mathcal{A}}_{\mathbf{j},\mathbf{k}}^\ast) ) \\
   & = \textbf{Vec}(\widehat{ {\mathcal{A}}_{1,\mathbf{i}}^\ast }) + \mathbf{j}  \textbf{Vec}(\widehat{ {\mathcal{A}}_{\mathbf{j},\mathbf{k}}^\ast }),
\end{align*}
which is equivalent to
\begin{equation} \label{diaeq}
\textbf{diag}(\widehat{ {\mathcal{A}}^\ast  })=  \textbf{diag}(\widehat{ {\mathcal{A}}_{1,\mathbf{i}}^\ast }) + \mathbf{j}  \textbf{diag}(\widehat{ {\mathcal{A}}_{\mathbf{j},\mathbf{k}}^\ast }).
\end{equation}
Similarly, we have $\textbf{diag}(\widehat{ \mathcal{A}  })=  \textbf{diag}(\widehat{ {\mathcal{A}}_{1,\mathbf{i}} }) + \mathbf{j}  \textbf{diag}(\widehat{ {\mathcal{A}}_{\mathbf{j},\mathbf{k}} }) \in \mathbb{H}_c^{n_3n_1\times n_3n_2 }.$  Its conjugate transpose is   $( \textbf{diag}(\hat{ \mathcal{A}  }))^\ast= (\textbf{diag}(\widehat{ {\mathcal{A}}_{1,\mathbf{i}} }))^\ast+ \mathbf{j} (\textbf{diag}(\widehat{ {\mathcal{A}}_{\mathbf{j},\mathbf{k}} })) ^\ast .$  Combing with the identity  (\ref{diaeq}) and Lemma \ref{Ctp01}, we have  $  \textbf{diag}(\widehat{ {\mathcal{A}}^\ast  })=(\textbf{diag}(\widehat{ \mathcal{A} }))^\ast.$
\end{proof}

\begin{corollary}
\label{cor1}
$\mathcal{A} \in \mathbb{H}_c^{n \times n \times m}$   is Hermitian (resp. unitary) if and only if   $\textbf{diag}(\widehat{ \mathcal{A} })$ is Hermitian (resp. unitary).
\end{corollary}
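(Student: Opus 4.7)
The plan is to read both equivalences off the preceding results by combining Proposition \ref{Htp02} (which turns the $\ast$ on a tensor into a $\ast$ on its block-diagonal DFT) with Theorem \ref{Htthm} (which turns $\ast_{Ht}$ into ordinary matrix multiplication of block-diagonalized DFTs). The only genuinely new small computation needed is to identify $\textbf{diag}(\widehat{\mathcal{I}})$ with an ordinary identity matrix.

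First, I would compute the DFT of the identity tensor. Since $\mathcal{I}\in\mathbb{H}_c^{n\times n\times m}$ has $I^{(1)}=I_n$ and $I^{(k)}=0$ for $k\geq 2$, the tube $\mathcal{I}(s,t,:)$ is supported only in its first entry. Applying $\sqrt{m}\,F_m$ to such a vector produces the constant tube whose every entry equals $\mathcal{I}(s,t,1)$, because the first column of $F_m$ is $\tfrac{1}{\sqrt{m}}(1,\dots,1)^{T}$. Hence every frontal slice of $\widehat{\mathcal{I}}$ equals $I_n$, so $\textbf{diag}(\widehat{\mathcal{I}}) = I_{mn}$.

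Next, for the Hermitian case, I would use that the map $\mathcal{A}\mapsto \widehat{\mathcal{A}}$ is a bijection on $\mathbb{H}_c^{n\times n\times m}$ (since by (\ref{hatA}) it is implemented by the unitary matrix $\sqrt{m}(F_m\otimes I_n)$), and that $\widehat{\mathcal{A}}\mapsto \textbf{diag}(\widehat{\mathcal{A}})$ is also a bijection onto block-diagonal tensors. Therefore $\mathcal{A}=\mathcal{A}^{\ast}$ iff $\textbf{diag}(\widehat{\mathcal{A}}) = \textbf{diag}(\widehat{\mathcal{A}^{\ast}})$, and by Proposition \ref{Htp02} the right-hand side is $(\textbf{diag}(\widehat{\mathcal{A}}))^{\ast}$. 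This gives the Hermitian equivalence.

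Finally, for the unitary case, Theorem \ref{Htthm} converts $\mathcal{A}\ast_{Ht}\mathcal{A}^{\ast}=\mathcal{I}$ and $\mathcal{A}^{\ast}\ast_{Ht}\mathcal{A}=\mathcal{I}$ into
\[
\textbf{diag}(\widehat{\mathcal{A}})\,\textbf{diag}(\widehat{\mathcal{A}^{\ast}}) = \textbf{diag}(\widehat{\mathcal{I}}), \qquad \textbf{diag}(\widehat{\mathcal{A}^{\ast}})\,\textbf{diag}(\widehat{\mathcal{A}}) = \textbf{diag}(\widehat{\mathcal{I}}).
\]
Substituting $\textbf{diag}(\widehat{\mathcal{A}^{\ast}})=(\textbf{diag}(\widehat{\mathcal{A}}))^{\ast}$ from Proposition \ref{Htp02} and $\textbf{diag}(\widehat{\mathcal{I}})=I_{mn}$ from the first step, this is exactly the statement that $\textbf{diag}(\widehat{\mathcal{A}})$ is unitary. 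I do not anticipate a real obstacle here; the main thing to be careful about is the identification $\textbf{diag}(\widehat{\mathcal{I}})=I_{mn}$, since everything else is a direct chaining of earlier results.
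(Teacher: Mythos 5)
Your proposal is correct and follows essentially the same route as the paper: the paper's (very terse) proof likewise observes that $\textbf{diag}(\widehat{\mathcal{I}})=I_{nm}$ and then invokes Theorem \ref{Htthm} together with Proposition \ref{Htp02}. Your version simply fills in the details the paper leaves implicit (the DFT of the identity tensor and the injectivity of $\mathcal{A}\mapsto\textbf{diag}(\widehat{\mathcal{A}})$ for the Hermitian direction), and these details are all accurate.
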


\begin{proof}
By  Definition \ref{defHtident},
it is clear that  $\textbf{diag}(\widehat{ \mathcal{I}})=I_{nm}.$
% $\textbf{diag}(\widehat{ \mathcal{I} _{nnm} })=I_{nm}.$
Then, by Theorem \ref{Htthm} and Proposition \ref{Htp02}, the result follows.
\end{proof}

%%%%
\iffalse

 Similarly, we can obtain the following result.
\begin{corollary}
\label{cor2}
Let $\mathcal{A} \in \mathbb{H}_c^{n \times n  \times l}$.
Then $\mathcal{A}  \ast_{Ht}   \mathcal{I}  = \mathcal{I} \ast_{Ht} \mathcal{A}=  \mathcal{A}.$
 % $\mathcal{A}  \ast_{Ht}   \mathcal{I}_{nnl}  = \mathcal{I}_{nnl} \ast_{Ht} \mathcal{A}=  \mathcal{A}.$
\end{corollary}

Using Corollary \ref{cor2}, we can obtain the following result.
\begin{corollary}
\label{corinv}
 The inverse of  a  Segre  quaternion tensor $\mathcal{A} \in \mathbb{H}_c^{n \times n \times l}$  is unique if it exists.
\end{corollary}

\fi
%%%%%

\begin{proposition}
\label{Htconjtr}
Given  $\mathcal{A} \in \mathbb{H}_c^{n_1\times n_2 \times n_3}$ and $\mathcal{B} \in \mathbb{H}_c^{n_2\times n_4 \times n_3}$,   we have $  (\mathcal{A}\ast_{Ht}  \mathcal{B})^\ast = \mathcal{B}^\ast \ast_{Ht} \mathcal{A}^\ast. $
\end{proposition}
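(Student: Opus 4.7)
The plan is to reduce the identity to a routine matrix-level computation by passing everything through the DFT along the third mode and invoking Theorem \ref{Htthm} together with Proposition \ref{Htp02}. Concretely, I set $\mathcal{C} := \mathcal{A} \ast_{Ht} \mathcal{B} \in \mathbb{H}_c^{n_1 \times n_4 \times n_3}$. By Theorem \ref{Htthm}, this is equivalent to the block-diagonal identity
\[
\textbf{diag}(\widehat{\mathcal{C}}) = \textbf{diag}(\widehat{\mathcal{A}})\, \textbf{diag}(\widehat{\mathcal{B}})
\]
in $\mathbb{H}_c^{n_3 n_1 \times n_3 n_4}$ and $\mathbb{H}_c^{n_3 n_1 \times n_3 n_2}$, $\mathbb{H}_c^{n_3 n_2 \times n_3 n_4}$ respectively.

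Next I would take conjugate transposes of both sides. Because $\mathbb{H}_c$ is a commutative algebra, the reversal rule $(XY)^\ast = Y^\ast X^\ast$ for reduced biquaternion matrices was already recorded in Section 2 just after the definition of $A^\ast$ via the idempotent decomposition $A = A_{(c),1} e_1 + A_{(c),2} e_2$. Applying it yields
\[
\bigl(\textbf{diag}(\widehat{\mathcal{C}})\bigr)^\ast = \bigl(\textbf{diag}(\widehat{\mathcal{B}})\bigr)^\ast \bigl(\textbf{diag}(\widehat{\mathcal{A}})\bigr)^\ast.
\]
Now I apply Proposition \ref{Htp02} term-by-term, replacing $(\textbf{diag}(\widehat{\mathcal{X}}))^\ast$ by $\textbf{diag}(\widehat{\mathcal{X}^\ast})$ for $\mathcal{X} \in \{\mathcal{A}, \mathcal{B}, \mathcal{C}\}$. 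This gives
\[
\textbf{diag}(\widehat{\mathcal{C}^\ast}) = \textbf{diag}(\widehat{\mathcal{B}^\ast})\, \textbf{diag}(\widehat{\mathcal{A}^\ast}).
\]

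Finally, I invoke the ``if'' direction of Theorem \ref{Htthm} once more, applied to the tensors $\mathcal{B}^\ast \in \mathbb{H}_c^{n_4 \times n_2 \times n_3}$ and $\mathcal{A}^\ast \in \mathbb{H}_c^{n_2 \times n_1 \times n_3}$, whose Ht-product lies in $\mathbb{H}_c^{n_4 \times n_1 \times n_3}$, matching the format of $\mathcal{C}^\ast$. The conclusion is $\mathcal{C}^\ast = \mathcal{B}^\ast \ast_{Ht} \mathcal{A}^\ast$, which is exactly the claim.

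I do not anticipate a real obstacle: the only substantive ingredient is Proposition \ref{Htp02}, which has already been established, and the matrix-level reversal law over $\mathbb{H}_c$. The mild point to be careful about is simply tracking dimensions so that Theorem \ref{Htthm} applies in both directions with the right formats, and being explicit that the reversal $(XY)^\ast = Y^\ast X^\ast$ is valid for matrices over the commutative algebra $\mathbb{H}_c$ (as already noted in Section 2 via the $e_1,e_2$ decomposition).
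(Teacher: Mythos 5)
Your proposal is correct and follows essentially the same route as the paper: reduce via Theorem \ref{Htthm} and Proposition \ref{Htp02} to the matrix identity $(\textbf{diag}(\widehat{\mathcal{A}})\,\textbf{diag}(\widehat{\mathcal{B}}))^\ast = (\textbf{diag}(\widehat{\mathcal{B}}))^\ast(\textbf{diag}(\widehat{\mathcal{A}}))^\ast$, which holds for matrices over the commutative algebra $\mathbb{H}_c$. You simply spell out the back-and-forth through the DFT more explicitly than the paper's one-line argument does.
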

\begin{proof}
From  Theorem \ref{Htthm} and Proposition \ref{Htp02}, we only need to show
$$
( \textbf{diag}(\widehat{\mathcal{A}})  \textbf{diag}(\widehat{\mathcal{B}})  )^\ast =
(\textbf{diag}(\widehat{\mathcal{B}}))^\ast (\textbf{diag}(\widehat{\mathcal{A}}))^\ast.
$$
In fact, this equality holds for matrices $\textbf{diag}(\widehat{\mathcal{A}})$ and $\textbf{diag}(\widehat{\mathcal{B}})$  over $\mathbb{H}_c$. 
\end{proof}

\section{The Ht-SVD of third-order  tensors over $\mathbb{H}_c$ }
\label{s3}

The  SVD of a matrix over $\mathbb{H}_c$ has been discussed in some papers (see, e.g., \cite{pei2008}).
Let $A = A_{(c),1} e_1+ A_{(c),2} e_2 \in \mathbb{H}_c^{m\times n}$.  If $A_{(c),1}$ and $ A_{(c),2}$ have the following SVDs:
\begin{center}
$A_{(c),1}=U_1 \Sigma_1 V_1^\ast $ and $A_{(c),2}=U_2 \Sigma_2 V_2^\ast,$
\end{center}
then the SVD of     $A$ is
\begin{equation}
\label{RBSVD}
A= U \Sigma V^\ast,\end{equation}
 where $U=U_1 e_1+U_2 e_2$  and $V=V_1 e_1+V_2 e_2$ are unitary matrices, $\Sigma=\Sigma_1 e_1+\Sigma_2 e_2 $ is  diagonal  but not a real matrix in general.  We define  the {\bf rank} of a  matrix $A \in \mathbb{H}_c^{m \times n}$ is the number of its non-zero singular values.

 Recall that a third-order tensor is  called  {\bf $f$-diagonal}  if each frontal slice is diagonal (\cite{kilmer2011}).
 Next,  we propose the Ht-SVD
of a reduced biquaternion tensor.

\begin{theorem}(Ht-SVD)
\label{HtSVD}
Let  $\mathcal{A} \in \mathbb{H}_c^{n_1\times n_2 \times n_3}$. Then there exist unitary  tensors $\mathcal{U}\in \mathbb{H}_c^{n_1 \times n_1  \times n_3} $ and $\mathcal{V}\in \mathbb{H}_c^{n_2 \times n_2  \times n_3} $ such that
 \[
 \mathcal{A}=\mathcal{U}\ast_{Ht} \mathcal{S}\ast_{Ht} \mathcal{V}^\ast,
 \]
where $ \mathcal{S} \in \mathbb{H}_c^{n_1 \times n_2  \times n_3} $ is  $f$-diagonal. Such decomposition is called $Ht$-SVD of $\mathcal{A}$.
\end{theorem}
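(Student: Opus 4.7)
The plan is to reduce the construction to the DFT domain, where by Theorem~\ref{Htthm} the Ht-product becomes ordinary block-diagonal matrix multiplication, apply the reduced biquaternion matrix SVD (\ref{RBSVD}) slice by slice, and then pull back through the inverse DFT.

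First I would form $\widehat{\mathcal{A}}$ and hence the block diagonal matrix $\textbf{diag}(\widehat{\mathcal{A}}) \in \mathbb{H}_c^{n_3 n_1 \times n_3 n_2}$. By Theorem~\ref{Htthm} combined with Proposition~\ref{Htp02}, producing a factorization $\mathcal{A} = \mathcal{U} \ast_{Ht} \mathcal{S} \ast_{Ht} \mathcal{V}^\ast$ is equivalent to producing a matrix factorization
\[
\textbf{diag}(\widehat{\mathcal{A}}) = \textbf{diag}(\widehat{\mathcal{U}}) \, \textbf{diag}(\widehat{\mathcal{S}}) \, (\textbf{diag}(\widehat{\mathcal{V}}))^\ast.
\]
Thus it suffices to build $\widehat{\mathcal{U}}, \widehat{\mathcal{S}}, \widehat{\mathcal{V}}$ with the desired structural properties in the DFT domain and then apply the inverse DFT along the third mode to obtain $\mathcal{U}, \mathcal{S}, \mathcal{V}$.

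Next, for each $k = 1, \dots, n_3$, the frontal slice $\widehat{A}^{(k)} \in \mathbb{H}_c^{n_1 \times n_2}$ admits a reduced biquaternion matrix SVD of the form (\ref{RBSVD}), say
\[
\widehat{A}^{(k)} = \widehat{U}^{(k)} \widehat{S}^{(k)} (\widehat{V}^{(k)})^\ast,
\]
with $\widehat{U}^{(k)}, \widehat{V}^{(k)}$ unitary over $\mathbb{H}_c$ and $\widehat{S}^{(k)}$ diagonal. Stacking these as frontal slices defines tensors $\widehat{\mathcal{U}} \in \mathbb{H}_c^{n_1 \times n_1 \times n_3}$, $\widehat{\mathcal{S}} \in \mathbb{H}_c^{n_1 \times n_2 \times n_3}$, $\widehat{\mathcal{V}} \in \mathbb{H}_c^{n_2 \times n_2 \times n_3}$; applying the inverse DFT along the third mode yields the candidate factors $\mathcal{U}, \mathcal{S}, \mathcal{V}$ in the spatial domain. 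It then remains to verify three properties. The factorization itself follows from Theorem~\ref{Htthm} by construction. Unitarity of $\mathcal{U}$ and $\mathcal{V}$ follows from Corollary~\ref{cor1} applied slicewise, since their DFT block diagonal representatives are unitary block by block. Finally, $\mathcal{S}$ is $f$-diagonal because $\widehat{\mathcal{S}}(s,t,:) \equiv 0$ for $s \neq t$, and since $F_{n_3}$ is invertible the defining relation $\widehat{\mathcal{S}}(s,t,:) = \sqrt{n_3}\, F_{n_3} \mathcal{S}(s,t,:)$ forces $\mathcal{S}(s,t,:) = 0$ as well.

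The main obstacle I anticipate is a bookkeeping one rather than a conceptual one: one must ensure that the conjugate transpose declared by Definition~\ref{defHtconjtr} is compatible with the blockwise conjugate transpose in the DFT domain, and that the $f$-diagonality transfers through the inverse DFT. Proposition~\ref{Htp02} resolves the first issue, and the componentwise action of the DFT on third-mode fibers resolves the second, so no machinery beyond what has already been developed in Section~2 is required.
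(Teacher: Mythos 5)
Your proposal is correct and follows essentially the same route as the paper's proof: pass to the DFT domain via Theorem~\ref{Htthm} and Proposition~\ref{Htp02}, apply the reduced biquaternion matrix SVD (\ref{RBSVD}) to each frontal slice of $\widehat{\mathcal{A}}$, fold back through the inverse DFT, and invoke Corollary~\ref{cor1} for unitarity. Your explicit justification that $f$-diagonality transfers back through the inverse DFT (via invertibility of $F_{n_3}$ acting on the tube fibers) is slightly more careful than the paper's, which simply asserts it.
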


\begin{proof}
   We first transform $\mathcal{A}$ into   $\widehat{\mathcal{A}}$ by using DFT along the third mode, and then obtain the block diagonal matrix $\textbf{diag}(\widehat{\mathcal{A}})$ by (\ref{hatdiag}).
In addition, we consider the SVD of each $\widehat{A}^{(s)}$ in $\textbf{diag}(\widehat{\mathcal{A}})$ with formula (\ref{RBSVD}).
 Suppose  $\widehat{A}^{(s)}= \widehat{U}^{(s)}  \widehat{\Sigma}^{(s)} (\widehat{V}^{(s)})^\ast, s=1, 2, \cdots, n_3.$
 Then we have
\begin{equation*}\label{hatSVD0}
 \textbf{diag}(\widehat{\mathcal{A}}) =
\begin{bmatrix}
 \widehat{U}^{(1)}   &  &  &  \\
     & \widehat{U}^{(2)}  &  & \\
     &   &\ddots   & \\
     &   &  & \widehat{U}^{(n_3)}\\
\end{bmatrix}
\begin{bmatrix}
 \widehat{\Sigma}^{(1)}   &  &  &  \\
     & \widehat{\Sigma}^{(2)}  &  & \\
     &   &\ddots   & \\
     &   &  & \widehat{\Sigma}^{(n_3)}\\
\end{bmatrix}
\begin{bmatrix}
 (\widehat{V}^{(1)})^\ast   &  &  &  \\
     & (\widehat{V}^{(2)})^\ast  &  & \\
     &   &\ddots   & \\
     &   &  & (\widehat{V}^{(n_3)})^\ast\\
\end{bmatrix}.
\end{equation*}
Setting
\[
\textbf{diag}(\widehat{\mathcal{U}})=\begin{bmatrix}
 \widehat{U}^{(1)}   &  &  &  \\
     & \widehat{U}^{(2)}  &  & \\
     &   &\ddots   & \\
     &   &  & \widehat{U}^{(n_3)}\\
\end{bmatrix}, \quad  \textbf{diag}(\widehat{\mathcal{V}})= \begin{bmatrix}
 \widehat{V}^{(1)}   &  &  &  \\
     & \widehat{V}^{(2)}  &  & \\
     &   &\ddots   & \\
     &   &  & \widehat{V}^{(n_3)}\\
\end{bmatrix},
\]
and $$\textbf{diag}(\hat{\mathcal{S}})= \begin{bmatrix}
 \widehat{\Sigma}^{(1)}   &  &  &  \\
     & \widehat{\Sigma}^{(2)}  &  & \\
     &   &\ddots   & \\
     &   &  & \widehat{\Sigma}^{(n_3)}\\
\end{bmatrix},$$   we have
\begin{equation}\label{hatSVD}
 \textbf{diag}(\widehat{\mathcal{A}})=  \textbf{diag}(\widehat{\mathcal{U}})  \textbf{diag}(\widehat{\mathcal{S}})
( \textbf{diag}(\widehat{\mathcal{V}}) )^\ast,
\end{equation}
in which  $ \textbf{diag}(\widehat{\mathcal{U}})$  and $\textbf{diag}(\widehat{\mathcal{V}})$ are unitary  matrices,
and $\textbf{diag}(\widehat{\mathcal{S}})$ is a block diagonal  matrix with elements being  diagonal matrices in general.
Next,  we construct reduced biquaternion tensors $\mathcal{U}, \mathcal{V}$ and $\mathcal{S}$  as follows:
\[
\mathcal{U} \doteq  \textbf{Fold} \left(\frac{1}{\sqrt{n_3}} (F_{n_3}^\ast \otimes I_{n_1})    \textbf{diag}(\widehat{\mathcal{U}}) (e \otimes I_{n_1}) \right),
\]
\[
\mathcal{V} \doteq  \textbf{Fold} \left( \frac{1}{\sqrt{n_3}} (F_{n_3}^\ast \otimes I_{n_2})    \textbf{diag}(\widehat{\mathcal{V}}) (e \otimes I_{n_2}) \right),
\]
\[
\mathcal{S} \doteq  \textbf{Fold} \left(\frac{1}{\sqrt{n_3}} (F_{n_3}^\ast \otimes I_{n_1})    \textbf{diag}(\widehat{\mathcal{S}}) (e \otimes I_{n_2}) \right).
\]
Observing that $  \textbf{Vec}(\widehat{\mathcal{U}}) = \sqrt{n_3} (F_{n_3} \otimes I_{n_1}) \textbf{Vec}(\mathcal{U}),  \textbf{Vec}(\widehat{\mathcal{V}}) = \sqrt{n_3} (F_{n_3} \otimes I_{n_2}) \textbf{Vec}(\mathcal{V}), \textbf{Vec}(\widehat{\mathcal{S}}) = \sqrt{n_3} (F_{n_3} \otimes I_{n_1}) \textbf{Vec}(\mathcal{S}).$
That is, $ \widehat{\mathcal{U}}, \widehat{\mathcal{V}}, \widehat{\mathcal{S}}$ are the DFT  of tensors $\mathcal{U}, \mathcal{V}, \mathcal{S}$, respectively.
 Thus by applying  Theorem \ref{Htthm} and Proposition \ref{Htp02} , we infer from the equation (\ref{hatSVD}) that
\[
\mathcal{A}=\mathcal{U}\ast_{Ht} \mathcal{S}\ast_{Ht} \mathcal{V}^\ast.
\]
Obviously, $ \mathcal{S}$ is $f$-diagonal. And according to  Corollary \ref{cor1}, we know that $ \mathcal{U}$, $ \mathcal{V}$ are our required unitary tensors.
\end{proof}

Based on Theorem \ref{HtSVD}, we give the reduced  biquaternion tensor SVD decomposition (Ht-SVD) algorithm as follows. In MATLAB command, for the complex tensor $\mathcal{C} \in \mathbb{C}^{n_1\times n_2 \times n_3} $, the DFT of $\mathcal{C}$ and its inverse function are denoted as:
\begin{equation}
\hat{\mathcal{C}}=\text{fft}(\mathcal{C},\left[\,\right],3), \ \ \mathcal{C}=\text{ifft}(\hat{\mathcal{C}},\left[\,\right],3).\nonumber
\end{equation}

\begin{algorithm}
	%\textsl{}\setstretch{1.8}
	\renewcommand{\algorithmicrequire}{\textbf{Input:}}
	\renewcommand{\algorithmicensure}{\textbf{Output:}}
	\caption{{\bf (SVD-RBT)} SVD for a Reduced  Biquaternion Tensor}
	\label{alg4}
	\begin{algorithmic}[1]
		\STATE $\textbf{Input}$: $\mathcal{A}_{(c),1},\mathcal{A}_{(c),2}\in \mathbb{C}^{n_1\times n_2 \times n_3}$ from given $\mathcal{A} =\mathcal{A}_{(c),1} e_1+ \mathcal{A}_{(c),2} e_2 \in \mathbb{H}_c^{n_1\times n_2 \times n_3}$ 
		\STATE  $\textbf{Output}$: $\mathcal{U}\in \mathbb{H}_c^{n_1 \times n_1  \times n_3}, \mathcal{S} \in \mathbb{H}_c^{n_1 \times n_2  \times n_3}$ and $\mathcal{V}\in \mathbb{H}_c^{n_2 \times n_2  \times n_3},$ such that
 \[
 \mathcal{A}=\mathcal{U}\ast_{Ht} \mathcal{S}\ast_{Ht} \mathcal{V}^\ast  
 \]  
           
\STATE   $ \hat{\mathcal{A}}_{(c),1}$=fft$( \mathcal{A}_{(c),1},[],3)$;$\hat{\mathcal{A}}_{(c),2}$=fft$( \mathcal{A}_{(c),2},[],3)$
\STATE  Calculate each frontal slice of $\hat{\mathcal{U}}, \hat{\mathcal{S}}$ and $\hat{\mathcal{V}}$ from $\hat{\mathcal{A}}$:\\
 $\textbf{for}$ $i=1,...n_{3},$ $\textbf{do}$\\
 $[\hat{U}_1^{(i)}, \hat{S}_1^{(i)}, \hat{V}_1^{(i)}]=svd(\hat{\mathcal{A}}_{(c),1}^{(i)})$,  $[\hat{U}_2^{(i)}, \hat{S}_2^{(i)}, \hat{V}_2^{(i)}]=svd(\hat{\mathcal{A}}_{(c),2}^{(i)})$\\
 $\textbf{end for}$
  \STATE $\mathcal{U}_1$=ifft($\hat{\mathcal{U}}_1$,$\left[\,\right]$,3), $\mathcal{S}_1$=ifft($\hat{\mathcal{S}}_1$,$\left[\,\right]$,3), $\mathcal{V}_1$=ifft($\hat{\mathcal{V}}_1$,$\left[\,\right]$,3); \\ $\mathcal{U}_2$=ifft($\hat{\mathcal{U}}_2$,$\left[\,\right]$,3), $\mathcal{S}_2$=ifft($\hat{\mathcal{S}}_2$,$\left[\,\right]$,3), $\mathcal{V}_2$=ifft($\hat{\mathcal{V}}$,$\left[\,\right]$,3)    
 \STATE $\mathcal{U} =\mathcal{U}_1 e_1+ \mathcal{U}_2 e_2;$
 $\mathcal{S} =\mathcal{S}_1 e_1+ \mathcal{S}_2 e_2;$
 $\mathcal{V} =\mathcal{V}_1 e_1+ \mathcal{V}_2 e_2;$
\STATE $\textbf{return}$   $\mathcal{U}$,
 $\mathcal{S}$,
 $\mathcal{V}$
	\end{algorithmic}  
\end{algorithm}

\noindent \textbf{Complexity Analysis of Algorithm 1: }
For a given reduced biquaternion tensor $\mathcal{A} \in \mathbb{H}_c^{n_1\times n_2 \times n_3}$, the computational complexity of each iteration of Algorithm 1 (\texttt{SVD-RBT}) is primarily driven by two major components: the fast fourier transform (FFT) and its inverse, and the SVD of the tensor's frontal 
slices. The computational complexity of  FFT and inverse FFT  operations is \(O(n_1 n_2 n_3 \log n_3)\). The SVD process, applied separately to each frontal slice, has a complexity of \(O(n_3 \min(n_1^2 n_2, n_1 n_2^2))\). Additionally, the tensor reconstruction step, which involves simple element-wise addition, has a complexity of \(O(n_1 n_2 n_3)\). Thus, the total computational complexity for each iteration of the algorithm is \(O(n_1 n_2 n_3 \log n_3 + n_3 \min(n_1^2 n_2, n_1 n_2^2))\). 

\section{The MP inverses of reduced biquaternion  tensors }
\label{MP}

We refer the readers to \cite{ben1974} for Moore-Penrose (MP) inverses.    In this section, as an application of Ht-SVD of a RB tensor, we define  the    MP inverse  for a RB tensor and find a formula for the MP inverse
by using Theorem \ref{HtSVD}. Moreover, we explore some theoretical properties of the MP inverses  of RB  tensors.

For a given matrix $A$, its MP inverse  is the unique   matrix $X$ (denoted as $A^\dagger$) satisfying
 \[
 AXA=A, \  XAX=X,  \  (AX)^\ast=AX,   \  (XA)^\ast=XA.
 \]
According to the properties of reduced biquaternion matrices, we can easily derive the MP inverse  of a reduced biquaternion matrix as follows.
\begin{theorem}
\label{defRBMP}
Let $A =A_{(c),1} e_1+ A_{(c),2} e_2 \in \mathbb{H}_c^{m\times n},$ where $A_{(c),1}, \  A_{(c),2} \in \mathbb{C}^{m\times n}$. The MP inverse of $A$ is   $A^\dagger =A_{(c),1}^\dagger e_1+ A_{(c),2}^\dagger e_2 \in \mathbb{H}_c^{n\times m}.$
\end{theorem}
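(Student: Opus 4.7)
The plan is to exhibit $X = A_{(c),1}^\dagger e_1 + A_{(c),2}^\dagger e_2$ as a candidate and verify directly that it satisfies the four Moore--Penrose equations, then appeal to uniqueness. The key ingredient is the componentwise multiplication rule already established in Section 2: for any $P = P_{(c),1} e_1 + P_{(c),2} e_2$ and $Q = Q_{(c),1} e_1 + Q_{(c),2} e_2$ (of compatible sizes) one has $PQ = P_{(c),1} Q_{(c),1} e_1 + P_{(c),2} Q_{(c),2} e_2$, together with the conjugate-transpose identity $P^\ast = P_{(c),1}^\ast e_1 + P_{(c),2}^\ast e_2$. Under these two rules, every product or conjugate transpose of RB matrices splits cleanly across the $e_1$ and $e_2$ idempotent components.

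First I would compute $AXA$. By the multiplication rule applied twice, $AXA = (A_{(c),1} A_{(c),1}^\dagger A_{(c),1}) e_1 + (A_{(c),2} A_{(c),2}^\dagger A_{(c),2}) e_2$, and since each complex factor $A_{(c),i}^\dagger$ is the ordinary MP inverse of $A_{(c),i}$, each parenthesised expression reduces to $A_{(c),i}$, giving $AXA = A$. The same mechanical splitting handles $XAX = X$. For the symmetry conditions, I would combine the multiplication rule with the conjugate-transpose rule to get $(AX)^\ast = (A_{(c),1} A_{(c),1}^\dagger)^\ast e_1 + (A_{(c),2} A_{(c),2}^\dagger)^\ast e_2$, which equals $AX$ because each complex block $A_{(c),i} A_{(c),i}^\dagger$ is Hermitian in $\mathbb{C}^{m\times m}$. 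The identity $(XA)^\ast = XA$ follows identically.

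Finally, uniqueness of the MP inverse is a standard algebraic consequence of the four defining equations and holds in any $\ast$-ring in which the four equations make sense; the usual proof uses only associativity of multiplication, the involutive behaviour of $\ast$, and the identity $(PQ)^\ast = Q^\ast P^\ast$, all of which were verified for $\mathbb{H}_c$-matrices in Section 2. Hence the $X$ exhibited above is \emph{the} MP inverse and we may write $A^\dagger = A_{(c),1}^\dagger e_1 + A_{(c),2}^\dagger e_2$.

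No step poses a real obstacle since the $e_1,e_2$ decomposition diagonalises RB-matrix arithmetic into two independent complex problems; the only point requiring care is to remember that the conjugate transpose must be applied componentwise (as per the formula given in Section 2) rather than being confused with the alternative $B_1 + \mathbf{j}B_2$ representation.
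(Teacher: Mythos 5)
Your proposal is correct and is exactly the argument the paper intends: the paper states this theorem without proof (``we can easily derive\dots''), and the intended derivation is precisely your componentwise verification of the four Moore--Penrose equations using the rules $AB=A_{(c),1}B_{(c),1}e_1+A_{(c),2}B_{(c),2}e_2$ and $A^\ast=A_{(c),1}^\ast e_1+A_{(c),2}^\ast e_2$ from Section~2, plus the standard equational uniqueness argument (which the paper spells out only in the tensor setting, in Theorem~\ref{HtMP}). No gaps.
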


Now, we introduce the definition of the MP inverse of a third-order reduced  biquaternion tensor.
\begin{definition}\label{defHtMP}
Let $ \mathcal{A} \in \mathbb{H}_c^{n_1\times n_2 \times n_3}$.
If there exists $ \mathcal{X} \in   \mathbb{H}_c^{n_2\times n_1 \times n_3} $ satisfying:
\begin{itemize}
  \item [(i)] $\mathcal{A} \ast_{Ht} \mathcal{X} \ast_{Ht} \mathcal{A}=\mathcal{A}; $
  \item [(ii)] $\mathcal{X} \ast_{Ht} \mathcal{A} \ast_{Ht} \mathcal{X}=\mathcal{X}; $
  \item [(iii)] $(\mathcal{A} \ast_{Ht} \mathcal{X})^\ast=\mathcal{A} \ast_{Ht} \mathcal{X}; $
  \item [(iv)] $(\mathcal{X} \ast_{Ht} \mathcal{A})^\ast=\mathcal{X} \ast_{Ht} \mathcal{A},$
\end{itemize}
then   $ \mathcal{X}$ is said to be an MP inverse of $\mathcal{A}$,   denoted   by $\mathcal{A}^\dagger.$
% If $(i)$ of the above equations holds, then $\mathcal{X} $  is called an $(i)$-inverse of $\mathcal{A}$, denoted by $\mathcal{A}^{(i)}$. {\color{red} Do we need this?}
\end{definition}

For each $n_1\times n_2 \times n_3$ reduced biquaternion tensor, the following theorem shows that it has a unique MP  inverse.

\begin{theorem}
\label{HtMP}
 Let $ \mathcal{A} \in \mathbb{H}_c^{n_1\times n_2 \times n_3}$ and its  Ht-SVD   be  $  \mathcal{A}=\mathcal{U}\ast_{Ht} \mathcal{S}\ast_{Ht} \mathcal{V}^\ast, $
  where  $\mathcal{U}\in \mathbb{H}_c^{n_1 \times n_1  \times n_3}, \mathcal{V}\in \mathbb{H}_c^{n_2 \times n_2  \times n_3} $  are unitary  and
 $ \mathcal{S} \in \mathbb{H}_c^{n_1 \times n_2  \times n_3} $ is  f-diagonal.
Then $\mathcal{A}$ has a unique MP inverse
 \begin{equation}
 \label{HtMPfm}
 \mathcal{A}^\dagger = \mathcal{V}\ast_{Ht} \mathcal{S}^\dagger \ast_{Ht} \mathcal{U}^\ast,
 \end{equation}
where  $\mathcal{S}^\dagger = \textbf{Fold} \left(\frac{1}{\sqrt{n_3}} (F_{n_3}^\ast \otimes I_{n_2})    (\textbf{diag}(\widehat{\mathcal{S}}))^\dagger     (e \otimes I_{n_1}) \right)$.\end{theorem}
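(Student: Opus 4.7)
The plan is to prove existence by direct verification using the formula, and uniqueness by transporting the four Penrose equations into the Fourier domain via Theorem \ref{Htthm}, where they reduce to the standard matrix Moore-Penrose conditions.

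For existence, I would set $\mathcal{X} = \mathcal{V}\ast_{Ht} \mathcal{S}^\dagger \ast_{Ht} \mathcal{U}^\ast$ with $\mathcal{S}^\dagger$ as defined, and check the four conditions in Definition \ref{defHtMP}. The key intermediate claim is that $\mathcal{S}\ast_{Ht}\mathcal{S}^\dagger\ast_{Ht}\mathcal{S} = \mathcal{S}$, $\mathcal{S}^\dagger\ast_{Ht}\mathcal{S}\ast_{Ht}\mathcal{S}^\dagger = \mathcal{S}^\dagger$, and that $\mathcal{S}\ast_{Ht}\mathcal{S}^\dagger$, $\mathcal{S}^\dagger\ast_{Ht}\mathcal{S}$ are Hermitian. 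By Theorem \ref{Htthm} and Proposition \ref{Htp02}, each of these identities is equivalent to the same identity holding for the block-diagonal matrices $\textbf{diag}(\widehat{\mathcal{S}})$ and $(\textbf{diag}(\widehat{\mathcal{S}}))^\dagger$ over $\mathbb{H}_c$, and since the MP inverse of a block-diagonal RB matrix is the block-diagonal matrix of the individual MP inverses (from Theorem \ref{defRBMP}), these follow from the ordinary matrix Penrose equations block by block. Combining with the unitarity $\mathcal{V}^\ast\ast_{Ht}\mathcal{V} = \mathcal{I}$, $\mathcal{U}^\ast\ast_{Ht}\mathcal{U} = \mathcal{I}$ (Definition \ref{defHtidemp}) and the associativity of $\ast_{Ht}$ (Proposition \ref{Htassocia}), together with Proposition \ref{Htconjtr} for the conjugate-transpose conditions, all four Penrose axioms for $\mathcal{A}$ and $\mathcal{X}$ follow by routine telescoping.

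For uniqueness, I would suppose $\mathcal{X}_1, \mathcal{X}_2$ both satisfy (i)--(iv) in Definition \ref{defHtMP}. Applying Theorem \ref{Htthm} and Proposition \ref{Htp02} translates each Penrose condition into a matrix Penrose condition in the Fourier domain, namely
\[
\textbf{diag}(\widehat{\mathcal{A}})\,\textbf{diag}(\widehat{\mathcal{X}_k})\,\textbf{diag}(\widehat{\mathcal{A}}) = \textbf{diag}(\widehat{\mathcal{A}}),
\]
and the three analogues. Because this is a genuine RB-matrix identity and the MP inverse of an RB matrix is unique (Theorem \ref{defRBMP}), we obtain $\textbf{diag}(\widehat{\mathcal{X}_1}) = \textbf{diag}(\widehat{\mathcal{X}_2})$. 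Since the map $\mathcal{X}\mapsto\textbf{diag}(\widehat{\mathcal{X}})$ is injective (it factors through the invertible relation (\ref{hatA}) and the $\textbf{Vec}/\textbf{Fold}$ bijection together with the $\textbf{diag}$ construction), this gives $\mathcal{X}_1 = \mathcal{X}_2$.

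The only delicate step is verifying that $\mathcal{S}^\dagger$ as defined via the $\textbf{Fold}$ formula really corresponds under DFT to $(\textbf{diag}(\widehat{\mathcal{S}}))^\dagger$ so that Theorem \ref{Htthm} can be invoked cleanly; this should follow essentially from the definition of $\mathcal{S}^\dagger$ and the relation (\ref{hatA}), exactly as $\mathcal{S}$ was built from $\textbf{diag}(\widehat{\mathcal{S}})$ in the proof of Theorem \ref{HtSVD}. Once this is set up, the rest is bookkeeping in the Fourier domain.
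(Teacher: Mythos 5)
Your proposal is correct. The existence half is essentially the paper's argument: the paper likewise reduces everything to the identity $\textbf{diag}(\widehat{\mathcal{S}^{\dagger}})=(\textbf{diag}(\widehat{\mathcal{S}}))^{\dagger}$ and then verifies the four conditions of Definition \ref{defHtMP} by routine manipulation with Theorem \ref{Htthm}, Propositions \ref{Htassocia}, \ref{Htconjtr} and Theorem \ref{defRBMP}, so your ``delicate step'' is exactly the step the paper singles out. Where you genuinely diverge is uniqueness. The paper stays entirely at the tensor level and runs the classical Penrose telescoping chain, $\mathcal{X}=\mathcal{X}\ast_{Ht}\mathcal{A}\ast_{Ht}\mathcal{X}=\mathcal{X}\ast_{Ht}\mathcal{X}^{\ast}\ast_{Ht}\mathcal{A}^{\ast}=\cdots=\mathcal{Y}$, using only associativity and Proposition \ref{Htconjtr}; this is self-contained and needs no transfer principle. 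You instead push the four Penrose conditions into the Fourier domain via Theorem \ref{Htthm} and Proposition \ref{Htp02}, invoke uniqueness of the Moore--Penrose inverse of an RB matrix, and pull back by injectivity of $\mathcal{X}\mapsto\textbf{diag}(\widehat{\mathcal{X}})$. That route is valid (the map is injective since $F_{n_3}$ is unitary and $\textbf{Vec}/\textbf{Fold}$ is a bijection), and it buys you a uniform ``everything reduces to block-diagonal RB matrices'' viewpoint; the one thing to be careful about is that Theorem \ref{defRBMP} as stated only supplies a formula for the RB-matrix MP inverse, while its uniqueness is only asserted in the surrounding text --- to make your argument airtight you would either prove that matrix-level uniqueness (by the same telescoping chain, which works over any ring with involution, zero divisors notwithstanding) or simply run the chain on tensors as the paper does.
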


\begin{proof}
From Theorem \ref{Htthm}, Proposition \ref{Htassocia},  Proposition \ref{Htconjtr} and Definition \ref{defRBMP}, it is not difficult to verify that  $\textbf{diag}(\widehat{ \mathcal{S}^{\dagger} })= (\textbf{diag}(\widehat{\mathcal{S}}))^\dagger$  and the expression of $\mathcal{A}^\dagger$ given in (\ref{HtMPfm}) satisfies all four conditions in Definition \ref{defHtMP}.
 Next, we need to prove the uniqueness of the MP inverse.
Suppose that  $\mathcal{X}$ and $\mathcal{Y}$ are two MP inverses of     $\mathcal{A}$, respectively. Then
\begin{align*}
   \mathcal{X} & =\mathcal{X} \ast_{Ht} \mathcal{A} \ast_{Ht} \mathcal{X}=\mathcal{X} \ast_{Ht} \mathcal{X}^\ast \ast_{Ht} \mathcal{A}^\ast \\
   &= \mathcal{X} \ast_{Ht} \mathcal{X}^\ast \ast_{Ht} \mathcal{A}^\ast \ast_{Ht} \mathcal{Y}^\ast \ast_{Ht} \mathcal{A}^\ast\\
   &=\mathcal{X} \ast_{Ht} \mathcal{Y}^\ast \ast_{Ht} \mathcal{A}^\ast= \mathcal{X} \ast_{Ht} \mathcal{A} \ast_{Ht} \mathcal{Y} \\
   &=\mathcal{X} \ast_{Ht} \mathcal{A} \ast_{Ht} \mathcal{Y} \ast_{Ht} \mathcal{A} \ast_{Ht} \mathcal{Y}\\
   &= (\mathcal{X} \ast_{Ht} \mathcal{A})^\ast \ast_{Ht} (\mathcal{Y} \ast_{Ht} \mathcal{A})^\ast \ast_{Ht} \mathcal{Y}\\
   &=\mathcal{A}^\ast \ast_{Ht} \mathcal{Y}^\ast  \ast_{Ht} \mathcal{Y}= \mathcal{Y} \ast_{Ht} \mathcal{A} \ast_{Ht} \mathcal{Y}\\
   &=\mathcal{Y}.
\end{align*}
The proof is completed.
\end{proof}

Next, we explore some properties of  the MP inverse of a reduced biquaternion tensor   $\mathcal{A}$.
\begin{proposition}
\label{proRBMP}
Let $\mathcal{A} \in \mathbb{H}_c^{n_1\times n_2 \times n_3}$. Then
\begin{itemize}
  \item [(a)] $(\mathcal{A}^\dagger)^\dagger =\mathcal{A}$;
  \item [(b)] $(\mathcal{A}^\dagger)^\ast = (\mathcal{A}^\ast)^\dagger$;

  \item [(c)] $(\mathcal{A} \ast_{Ht} \mathcal{A}^\ast )^\dagger=(\mathcal{A}^\ast)^\dagger \ast_{Ht}  \mathcal{A} ^\dagger$  and $(\mathcal{A}^\ast \ast_{Ht} \mathcal{A})^\dagger= \mathcal{A}^\dagger \ast_{Ht}  (\mathcal{A}^\ast)^\dagger$;

  \item [(d)] $(\mathcal{P} \ast_{Ht} \mathcal{A} \ast_{Ht} \mathcal{Q})^\dagger = \mathcal{Q}^\ast  \ast_{Ht} \mathcal{A}^\dagger  \ast_{Ht} \mathcal{P}^\ast$ ( $\mathcal{P}$ and $\mathcal{Q}$ are unitary reduced biquaternion tensors );

  \item [(e)]  If $\mathcal{A}=( a_{i_1 i_2 i_3}) \in \mathbb{H}_c^{n_1\times n_1 \times n_3}$  is f-diagonal, then $\mathcal{A}^\dagger $  is f-diagonal,  and   $(\mathcal{A}^\dagger_{i_1 i_2 i_3})=( a_{i_1 i_2 i_3}) ^ \dagger $. In this case, if each diagonal entry in each frontal slice has an inverse, then $\mathcal{A}^{\dagger}=\mathcal{A}^{-1} $;

  \item [(f)] $\mathcal{A}^\dagger = \mathcal{A}^\ast  \ast_{Ht} (\mathcal{A} \ast_{Ht} \mathcal{A}^\ast)^\dagger = (\mathcal{A}^\ast \ast_{Ht} \mathcal{A})^\dagger \ast_{Ht}  \mathcal{A}^\ast$;

  \item [(g)] $\mathcal{A}=\mathcal{A} \ast_{Ht} \mathcal{A}^\ast \ast_{Ht} (\mathcal{A}^\ast)^\dagger =  (\mathcal{A}^\ast)^\dagger \ast_{Ht}  \mathcal{A}^\ast \ast_{Ht} \mathcal{A} $;

  \item [(h)] $\mathcal{A}^\ast=\mathcal{A}^\ast \ast_{Ht} \mathcal{A} \ast_{Ht} \mathcal{A}^\dagger = \mathcal{A}^\dagger \ast_{Ht} \mathcal{A} \ast_{Ht}  \mathcal{A}^\ast $;

  \item [(i)] $\mathcal{A} \ast_{Ht} \mathcal{A}^\dagger = (\mathcal{A}^\ast)^\dagger     \ast_{Ht} \mathcal{A}^\ast$ and  $\mathcal{A}^\dagger \ast_{Ht} \mathcal{A} = \mathcal{A}^\ast  \ast_{Ht}  (\mathcal{A}^\ast)^\dagger$;

  \item [(j)] $\mathcal{A} \ast_{Ht} \mathcal{A}^\dagger = (\mathcal{A} \ast_{Ht} \mathcal{A}^\ast )^\dagger  \ast_{Ht} \mathcal{A} \ast_{Ht} \mathcal{A}^\ast =\mathcal{A} \ast_{Ht} \mathcal{A}^\ast \ast_{Ht} (\mathcal{A} \ast_{Ht} \mathcal{A}^\ast )^\dagger $;

  \item [(k)] $\mathcal{A}^\dagger \ast_{Ht} \mathcal{A} = (\mathcal{A}^\ast \ast_{Ht} \mathcal{A} )^\dagger  \ast_{Ht} \mathcal{A}^\ast \ast_{Ht} \mathcal{A} =\mathcal{A}^\ast \ast_{Ht} \mathcal{A} \ast_{Ht} (\mathcal{A}^\ast \ast_{Ht} \mathcal{A})^\dagger $;

  \item [(l)] $\mathcal{A} \ast_{Ht} \mathcal{A}^\dagger=\mathcal{A}^\dagger \ast_{Ht} \mathcal{A} $  if $ \mathcal{A} \ast_{Ht} \mathcal{A}^\ast = \mathcal{A}^\ast \ast_{Ht} \mathcal{A} $;

  \item [(m)] $\mathcal{A} \ast_{Ht} \mathcal{A}^\dagger $ and $\mathcal{A}^\dagger \ast_{Ht} \mathcal{A}$  are idempotent;

  \item [(n)] If   $ \mathcal{A} $ is Hermitian and idempotent,  then  $\mathcal{A}^\dagger =  \mathcal{A}$.
\end{itemize}
\end{proposition}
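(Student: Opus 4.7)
The plan is to reduce every property in (a)–(n) to the corresponding classical matrix identity by working in the ``hat domain''. The bridge is the following observation, which is essentially already contained in the proof of Theorem \ref{HtMP}: the MP inverse formula $\mathcal{A}^\dagger = \mathcal{V}\ast_{Ht}\mathcal{S}^\dagger\ast_{Ht}\mathcal{U}^\ast$ combined with Theorem \ref{Htthm} and Proposition \ref{Htp02} yields the identity
\[
\textbf{diag}(\widehat{\mathcal{A}^\dagger}) \;=\; (\textbf{diag}(\widehat{\mathcal{A}}))^\dagger,
\]
where the right-hand side is the usual MP inverse of a block diagonal RB matrix. So after applying the DFT along the third mode and forming $\textbf{diag}(\cdot)$, the Ht-product, conjugate transpose, and MP inverse of tensors go over to the ordinary matrix product, conjugate transpose, and MP inverse of matrices over $\mathbb{H}_c$. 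Each assertion (a)–(n) is then exactly a known identity for the MP inverse of a (block diagonal) RB matrix, which in turn reduces via Theorem \ref{defRBMP} to two classical complex MP-inverse identities applied to the two $e_1,e_2$ components.

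First I would record this ``DFT dictionary'' as a short lemma (or remark) and verify the bridge identity above by checking that $\mathcal{V}\ast_{Ht}\mathcal{S}^\dagger\ast_{Ht}\mathcal{U}^\ast$ satisfies the four conditions of Definition \ref{defHtMP} — uniqueness from Theorem \ref{HtMP} then forces equality. With the dictionary in hand, properties (a), (b), (c), (f), (g), (h), (i), (j), (k), (m) are immediate transcriptions of the analogous matrix identities $(X^\dagger)^\dagger=X$, $(X^\dagger)^\ast=(X^\ast)^\dagger$, $(XX^\ast)^\dagger=(X^\ast)^\dagger X^\dagger$, and so on; I would state one of them in full (say (c)) and indicate that the rest follow verbatim. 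For (d), unitarity of $\mathcal{P},\mathcal{Q}$ translates via Corollary \ref{cor1} into unitarity of $\textbf{diag}(\widehat{\mathcal{P}})$ and $\textbf{diag}(\widehat{\mathcal{Q}})$, so the matrix identity $(PAQ)^\dagger = Q^\ast A^\dagger P^\ast$ applies block-wise.

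Property (e) requires a small extra observation: the DFT acts only along the third mode, so if each frontal slice of $\mathcal{A}$ is diagonal, then each frontal slice of $\widehat{\mathcal{A}}$ is again diagonal, and the MP inverse of a diagonal RB matrix is obtained entrywise via Theorem \ref{defRBMP}. Inverting the DFT preserves this diagonal structure, giving the entrywise formula $\mathcal{A}^\dagger_{i_1 i_2 i_3}=a_{i_1 i_2 i_3}^\dagger$, and the case with all entries invertible yields $\mathcal{A}^\dagger=\mathcal{A}^{-1}$ via Definition \ref{defHtinv} and Proposition \ref{Htinv}. Property (l) follows because $\mathcal{A}\ast_{Ht}\mathcal{A}^\ast=\mathcal{A}^\ast\ast_{Ht}\mathcal{A}$ means $\textbf{diag}(\widehat{\mathcal{A}})$ is normal, hence commutes with its MP inverse.

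The only part with genuine content — and the one I expect to be the main sticking point — is (n). I would prove it directly from Definition \ref{defHtMP}: if $\mathcal{A}=\mathcal{A}^\ast$ and $\mathcal{A}\ast_{Ht}\mathcal{A}=\mathcal{A}$, then setting $\mathcal{X}:=\mathcal{A}$ gives $\mathcal{A}\ast_{Ht}\mathcal{X}\ast_{Ht}\mathcal{A}=\mathcal{A}\ast_{Ht}\mathcal{A}\ast_{Ht}\mathcal{A}=\mathcal{A}$, $\mathcal{X}\ast_{Ht}\mathcal{A}\ast_{Ht}\mathcal{X}=\mathcal{A}=\mathcal{X}$, and $(\mathcal{A}\ast_{Ht}\mathcal{X})^\ast=(\mathcal{A}\ast_{Ht}\mathcal{A})^\ast=\mathcal{A}^\ast\ast_{Ht}\mathcal{A}^\ast=\mathcal{A}\ast_{Ht}\mathcal{A}=\mathcal{A}\ast_{Ht}\mathcal{X}$ by Proposition \ref{Htconjtr}, with the fourth condition identical. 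Uniqueness from Theorem \ref{HtMP} then gives $\mathcal{A}^\dagger=\mathcal{A}$. Throughout, Proposition \ref{Htassocia}, Proposition \ref{Htdistr}, and Proposition \ref{Htconjtr} are the algebraic rules that let the matrix-style manipulations be carried out directly at the tensor level without repeatedly passing through the DFT, so most of the writeup can be kept short by invoking the dictionary once and then working formally.
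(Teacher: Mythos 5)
Your proposal is correct in substance but follows a genuinely different route from the paper. The paper proves (b), (c), (d) and (n) by verifying the four Penrose conditions of Definition \ref{defHtMP} directly at the tensor level, using only associativity (Proposition \ref{Htassocia}), the reversal law (Proposition \ref{Htconjtr}) and uniqueness (Theorem \ref{HtMP}); it derives (l) from (i) and (j) and leaves the remaining items to ``analogous'' computations. You instead push everything through the map $\mathcal{A}\mapsto \textbf{diag}(\widehat{\mathcal{A}})$ and quote the classical matrix identities. Your bridge identity $\textbf{diag}(\widehat{\mathcal{A}^\dagger})=(\textbf{diag}(\widehat{\mathcal{A}}))^\dagger$ is legitimate --- it is exactly what the paper itself uses inside the proofs of Theorems \ref{HtMP} and \ref{HtLSMP} --- and once it is in place, items (a)--(d) and (f)--(m) do reduce to known facts about complex MP inverses via the $e_1,e_2$ splitting of Theorem \ref{defRBMP}. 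What your route buys is a uniform treatment of the long list (f)--(k), which the paper merely asserts; what the paper's route buys is self-containedness, since it never appeals to classical identities such as $(XX^\ast)^\dagger=(X^\ast)^\dagger X^\dagger$ or the fact that a normal matrix commutes with its MP inverse, which you need for (l). Your treatment of (n) coincides with the paper's.

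The one genuine gap is in (e). Your argument correctly shows that $\mathcal{A}^\dagger$ is f-diagonal: the DFT acts tube-wise, so f-diagonality passes to $\widehat{\mathcal{A}}$, the MP inverse of a diagonal RB matrix is entrywise, and the inverse DFT preserves f-diagonality. But the step ``inverting the DFT \dots gives the entrywise formula $\mathcal{A}^\dagger_{i_1i_2i_3}=a_{i_1i_2i_3}^\dagger$'' does not follow: the inverse DFT does not intertwine entrywise MP inversion in the hat domain with entrywise MP inversion in the original domain. Concretely, for $n_1=n_2=1$, $n_3=2$ and the tube $\mathcal{A}(1,1,:)=(1,1)$, the hat-domain entries are $2$ and $0$, whose MP inverses are $1/2$ and $0$, and transforming back gives the tube $(1/4,\,1/4)$, not the entrywise MP inverse $(1,1)$. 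What is true is that each diagonal tube of $\mathcal{A}^\dagger$ is the MP inverse of the corresponding tube of $\mathcal{A}$ with respect to the Ht-product on $1\times 1\times n_3$ tensors (circular convolution); likewise, for the conclusion $\mathcal{A}^\dagger=\mathcal{A}^{-1}$ one must require invertibility of the diagonal entries of the frontal slices of $\widehat{\mathcal{A}}$, not of $\mathcal{A}$. The paper offers no proof of (e) and its statement inherits the same ambiguity, but as written your justification of the entrywise formula would fail.
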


\begin{proof}  We only prove (b), (c), (d), (l) and (n).  In the analogous way,  one can obtain the remaining properties.
\begin{itemize}
  %\item [(a)] It  can be seen from Definition \ref{defHtMP} that
      %$\mathcal{X}^\dagger =\mathcal{A} $,that is,  $(\mathcal{A}^\dagger)^\dagger =\mathcal{A}$.
  \item [(b)]Using Propositions \ref{Htassocia}  and   \ref{Htconjtr},    one can obtain that $ (\mathcal{A}^\dagger)^\ast $ is the MP inverse of  $\mathcal{A}^\ast$ by direct calculations.

  \item [(c)]  By  (b), Proposition \ref{Htassocia},  Proposition \ref{Htconjtr} and the fact $(\mathcal{A}^\ast)^\ast=\mathcal{A}$,   one can get that $ (\mathcal{A}^\ast)^\dagger \ast_{Ht}  \mathcal{A}^\dagger$ is the MP inverse of  $\mathcal{A} \ast_{Ht} \mathcal{A}^\ast$  and   $ \mathcal{A}^\dagger \ast_{Ht} (\mathcal{A}^\ast)^\dagger   $ is the MP inverse of  $\mathcal{A}^\ast \ast_{Ht} \mathcal{A}$.
  \item [(d)] Suppose $\mathcal{P}  \ast_{Ht} \mathcal{P}^\ast =\mathcal{P}^\ast \ast_{Ht} \mathcal{P}= \mathcal{I} $ and $\mathcal{Q}  \ast_{Ht} \mathcal{Q}^\ast =\mathcal{Q}^\ast \ast_{Ht} \mathcal{Q}= \mathcal{I} $. Then  we  have
 \[
 \begin{array}{lll}
 & (\mathcal{P} \ast_{Ht} \mathcal{A} \ast_{Ht} \mathcal{Q})  \ast_{Ht} (\mathcal{Q}^\ast  \ast_{Ht} \mathcal{A}^\dagger  \ast_{Ht} \mathcal{P}^\ast)  \ast_{Ht}  (\mathcal{P} \ast_{Ht} \mathcal{A} \ast_{Ht} \mathcal{Q}) \\
 &  =\mathcal{P} \ast_{Ht} \mathcal{A} \ast_{Ht} \mathcal{A}^\dagger  \ast_{Ht} \mathcal{A} \ast_{Ht} \mathcal{Q}
  =\mathcal{P} \ast_{Ht} \mathcal{A}  \ast_{Ht} \mathcal{Q},
 \end{array}
 \]
 \[
 \begin{array}{lll}
 & (\mathcal{Q}^\ast  \ast_{Ht} \mathcal{A}^\dagger  \ast_{Ht} \mathcal{P}^\ast)  \ast_{Ht}  (\mathcal{P} \ast_{Ht} \mathcal{A} \ast_{Ht} \mathcal{Q}) \ast_{Ht} (\mathcal{Q}^\ast  \ast_{Ht} \mathcal{A}^\dagger  \ast_{Ht} \mathcal{P}^\ast) \\
 & =\mathcal{Q}^\ast  \ast_{Ht} \mathcal{A}^\dagger  \ast_{Ht} \mathcal{A}  \ast_{Ht}  \mathcal{A}^\dagger \ast_{Ht}  \mathcal{P}^\ast   =\mathcal{Q}^\ast  \ast_{Ht} \mathcal{A}^\dagger  \ast_{Ht}  \mathcal{P}^\ast,
  \end{array}
  \]
\[
 \begin{array}{lll}
&  (\mathcal{P} \ast_{Ht} \mathcal{A} \ast_{Ht} \mathcal{Q}  \ast_{Ht} \mathcal{Q}^\ast  \ast_{Ht} \mathcal{A}^\dagger  \ast_{Ht} \mathcal{P}^\ast)^\ast  \\
& =\mathcal{P} \ast_{Ht} \mathcal{A} \ast_{Ht} \mathcal{A}^\dagger  \ast_{Ht}  \mathcal{P}^\ast
    = \mathcal{P} \ast_{Ht} \mathcal{A} \ast_{Ht}   \mathcal{Q} \ast_{Ht}  \mathcal{Q}^\ast  \ast_{Ht} \mathcal{A}^\dagger  \ast_{Ht}  \mathcal{P}^\ast,
  \end{array}
  \]
       and
        $$(\mathcal{Q}^\ast  \ast_{Ht} \mathcal{A}^\dagger  \ast_{Ht} \mathcal{P}^\ast  \ast_{Ht}  \mathcal{P} \ast_{Ht} \mathcal{A} \ast_{Ht} \mathcal{Q})^\ast  =\mathcal{Q}^\ast  \ast_{Ht} \mathcal{A}^\dagger   \ast_{Ht} \mathcal{A} \ast_{Ht} \mathcal{Q} = \mathcal{Q}^\ast  \ast_{Ht} \mathcal{A}^\dagger  \ast_{Ht} \mathcal{P}^\ast  \ast_{Ht}  \mathcal{P} \ast_{Ht} \mathcal{A} \ast_{Ht} \mathcal{Q}.$$
        Hence,  by Definition   \ref{defHtMP} we can derive   that $(\mathcal{P} \ast_{Ht} \mathcal{A} \ast_{Ht} \mathcal{Q})^\dagger = \mathcal{Q}^\ast  \ast_{Ht} \mathcal{A}^\dagger  \ast_{Ht} \mathcal{P}^\ast.$
  \item [(l)] It follows from (i) and (j).
 % \item [(m)] It follows from $(\mathcal{A} \ast_{Ht} \mathcal{A}^\dagger ) \ast_{Ht}  (\mathcal{A} \ast_{Ht} \mathcal{A}^\dagger)=\mathcal{A} \ast_{Ht} \mathcal{A}^\dagger$ and $(\mathcal{A}^\dagger \ast_{Ht} \mathcal{A} ) \ast_{Ht}  (\mathcal{A}^\dagger \ast_{Ht} \mathcal{A})=\mathcal{A}^\dagger \ast_{Ht} \mathcal{A}$.
  \item [(n)] %If $\mathcal{A} \ast_{Ht} \mathcal{A} =  \mathcal{A} $ and $\mathcal{A}^\ast =  \mathcal{A}$,
  If   $ \mathcal{A} $ is Hermitian and idempotent, then
$
\mathcal{A} \ast_{Ht} \mathcal{A} \ast_{Ht} \mathcal{A}  =  \mathcal{A} $  and $( \mathcal{A} \ast_{Ht} \mathcal{A}) ^\ast = \mathcal{A} ^\ast  \ast_{Ht} \mathcal{A}^\ast =\mathcal{A} \ast_{Ht} \mathcal{A}$.  Thus by Definition  \ref{defHtMP}  we have  $\mathcal{A}^\dagger =  \mathcal{A}$.

\end{itemize}
\end{proof}

In general,   $ (\mathcal{A} \ast_{Ht}  \mathcal{B} )^\dagger \neq \mathcal{B}^\dagger  \ast_{Ht}  \mathcal{A}^\dagger$. Under some conditions, we have the equality case given as follows.

\begin{proposition}
\label{proRBMPM}
Let $\mathcal{A} \in \mathbb{H}_c^{n_1\times n_2 \times n_3}$ and $\mathcal{B} \in \mathbb{H}_c^{n_2\times n_4 \times n_3}$. Then $ (\mathcal{A} \ast_{Ht}  \mathcal{B} )^\dagger= \mathcal{B}^\dagger  \ast_{Ht}  \mathcal{A}^\dagger$ if one of the following conditions  satisfies.
\begin{center}
 (a) $ \mathcal{A}= \mathcal{B} ^\dagger$; \  (b) $\mathcal{A}= \mathcal{B} ^\ast$; \
(c) $\mathcal{A}^\ast  \ast_{Ht}  \mathcal{A}=\mathcal{I}$; \
 (d) $\mathcal{B} \ast_{Ht} \mathcal{B}^\ast=\mathcal{I}$.
\end{center}
\end{proposition}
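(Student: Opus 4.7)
My plan is to verify in each of the four cases that $\mathcal{X}:=\mathcal{B}^{\dagger}\ast_{Ht}\mathcal{A}^{\dagger}$ satisfies the four conditions (i)--(iv) of Definition \ref{defHtMP} with respect to $\mathcal{A}\ast_{Ht}\mathcal{B}$; the uniqueness in Theorem \ref{HtMP} will then deliver the conclusion. I expect cases (a) and (b) to collapse almost immediately to facts already proved. For (b), substituting $\mathcal{A}=\mathcal{B}^{\ast}$ turns the claim into $(\mathcal{B}^{\ast}\ast_{Ht}\mathcal{B})^{\dagger}=\mathcal{B}^{\dagger}\ast_{Ht}(\mathcal{B}^{\ast})^{\dagger}$, which is precisely the second identity in Proposition \ref{proRBMP}(c). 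For (a), I would note that $\mathcal{A}\ast_{Ht}\mathcal{B}=\mathcal{B}^{\dagger}\ast_{Ht}\mathcal{B}$ is Hermitian by (iv) of Definition \ref{defHtMP} applied to $\mathcal{B}$ and idempotent by combining (i) and (ii) with associativity (Proposition \ref{Htassocia}); Proposition \ref{proRBMP}(n) would then yield $(\mathcal{B}^{\dagger}\ast_{Ht}\mathcal{B})^{\dagger}=\mathcal{B}^{\dagger}\ast_{Ht}\mathcal{B}$, and rewriting $\mathcal{B}=(\mathcal{B}^{\dagger})^{\dagger}=\mathcal{A}^{\dagger}$ via Proposition \ref{proRBMP}(a) would complete this case.

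For case (c), my first step would be the preliminary observation that $\mathcal{A}^{\ast}\ast_{Ht}\mathcal{A}=\mathcal{I}$ forces $\mathcal{A}^{\dagger}=\mathcal{A}^{\ast}$: plugging $\mathcal{A}^{\ast}$ for $\mathcal{X}$ in Definition \ref{defHtMP} makes (iv) trivial since $\mathcal{I}^{\ast}=\mathcal{I}$, while (i) and (ii) each collapse after one cancellation of the hypothesis, and (iii) reduces to Hermiticity of $\mathcal{A}\ast_{Ht}\mathcal{A}^{\ast}$, which is automatic from Proposition \ref{Htconjtr}. Substituting $\mathcal{A}^{\dagger}=\mathcal{A}^{\ast}$ into $\mathcal{X}=\mathcal{B}^{\dagger}\ast_{Ht}\mathcal{A}^{\ast}$ and expanding the four Penrose identities for $\mathcal{A}\ast_{Ht}\mathcal{B}$, I expect each identity to reduce, after one use of $\mathcal{A}^{\ast}\ast_{Ht}\mathcal{A}=\mathcal{I}$ and associativity, to a known fact: (i) and (ii) would become identities (i) and (ii) of Definition \ref{defHtMP} for $\mathcal{B}^{\dagger}$, (iv) would become Hermiticity of $\mathcal{B}^{\dagger}\ast_{Ht}\mathcal{B}$, and (iii) would become Hermiticity of $\mathcal{A}\ast_{Ht}(\mathcal{B}\ast_{Ht}\mathcal{B}^{\dagger})\ast_{Ht}\mathcal{A}^{\ast}$, which follows from Hermiticity of the inner block together with Proposition \ref{Htconjtr}. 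Case (d) I would handle symmetrically: $\mathcal{B}\ast_{Ht}\mathcal{B}^{\ast}=\mathcal{I}$ yields $\mathcal{B}^{\dagger}=\mathcal{B}^{\ast}$ by the same argument applied to $\mathcal{B}$, and then the four identities for $\mathcal{X}=\mathcal{B}^{\ast}\ast_{Ht}\mathcal{A}^{\dagger}$ collapse via the central cancellation.

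The one delicate point I anticipate is the asymmetry caused by the one-sided nature of the hypotheses in (c) and (d): in the conjugate-transpose identity (iii) of (c) (resp.\ (iv) of (d)) the sandwiched block does not collapse to $\mathcal{I}$, so there I would have to combine associativity with the product rule $(\mathcal{P}\ast_{Ht}\mathcal{Q})^{\ast}=\mathcal{Q}^{\ast}\ast_{Ht}\mathcal{P}^{\ast}$ (Proposition \ref{Htconjtr}) together with Hermiticity of $\mathcal{B}\ast_{Ht}\mathcal{B}^{\dagger}$ (resp.\ of $\mathcal{A}^{\dagger}\ast_{Ht}\mathcal{A}$), which is merely a restatement of a Penrose condition already available. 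Beyond this bookkeeping, no ingredient is required beyond the algebraic machinery developed in Sections \ref{s3} and \ref{MP}.
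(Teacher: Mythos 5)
The paper states Proposition \ref{proRBMPM} without any proof, so there is nothing to compare your argument against; what can be said is that your blind attempt is correct and would serve as a complete proof. Your reduction of (b) to the second identity of Proposition \ref{proRBMP}(c) is immediate, and in (a) the chain ``$\mathcal{B}^\dagger\ast_{Ht}\mathcal{B}$ is Hermitian and idempotent, hence self-Moore--Penrose-inverse by \ref{proRBMP}(n), and equals $\mathcal{B}^\dagger\ast_{Ht}\mathcal{A}^\dagger$ by \ref{proRBMP}(a)'' closes the case with no gaps. For (c) and (d) your preliminary claims $\mathcal{A}^\dagger=\mathcal{A}^\ast$ (resp.\ $\mathcal{B}^\dagger=\mathcal{B}^\ast$) check out against Definition \ref{defHtMP}, and the subsequent verification of the four Penrose identities for $\mathcal{B}^\dagger\ast_{Ht}\mathcal{A}^\dagger$ collapses exactly as you predict: in (c) the only identity that does not reduce by a central cancellation of $\mathcal{A}^\ast\ast_{Ht}\mathcal{A}=\mathcal{I}$ is (iii), where $(\mathcal{A}\ast_{Ht}(\mathcal{B}\ast_{Ht}\mathcal{B}^\dagger)\ast_{Ht}\mathcal{A}^\ast)^\ast=\mathcal{A}\ast_{Ht}(\mathcal{B}\ast_{Ht}\mathcal{B}^\dagger)\ast_{Ht}\mathcal{A}^\ast$ follows from Proposition \ref{Htconjtr} and Hermiticity of $\mathcal{B}\ast_{Ht}\mathcal{B}^\dagger$, and dually for (iv) in case (d). The only stylistic remark is that the authors elsewhere (e.g.\ in the proof of \ref{proRBMP}) carry out exactly this kind of four-identity verification plus appeal to uniqueness from Theorem \ref{HtMP}, so your proof is in the spirit of the paper and supplies a proof the authors chose to omit.
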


Denote $\mathcal{L}_{\mathcal{A}}=\mathcal{I}-{\mathcal{A}}^\dagger  \ast_{Ht} \mathcal{A}, \ \mathcal{R}_{\mathcal{A}}=\mathcal{I}-\mathcal{A}  \ast_{Ht} {\mathcal{A}}^\dagger.$
$\mathcal{O}$ represents  zero tensor with  all the entries being zero.
 Obviously, $ \mathcal{A}  \ast_{Ht}  \mathcal{O} = \mathcal{O} \ast_{Ht} \mathcal{A} =\mathcal{O}.$  %By  direct  derivation, the following results hold.

\begin{proposition}
\label{proLRRBMP}
Let $\mathcal{A} \in \mathbb{H}_c^{n_1\times n_2 \times n_3}$. Then
\begin{itemize}
  \item [(a)] $ \mathcal{A} \ast_{Ht}  \mathcal{L}_{\mathcal{A}} =\mathcal{O}$ and $\mathcal{R}_{\mathcal{A}} \ast_{Ht}  \mathcal{A} =\mathcal{O}$;
  \item [(b)] $\mathcal{L}_{\mathcal{A}} \ast_{Ht}  \mathcal{A}^\dagger =\mathcal{O}$ and $ \mathcal{A}^\dagger \ast_{Ht}  \mathcal{R}_{\mathcal{A}} =\mathcal{O}$;
  \item [(c)] $\mathcal{L}_{\mathcal{A}^\ast} =(\mathcal{R}_{\mathcal{A}})^\ast= \mathcal{R}_{\mathcal{A}}$ and $\mathcal{R}_{\mathcal{A}^ \ast} =(\mathcal{L}_{\mathcal{A}})^\ast = \mathcal{L}_{\mathcal{A}}$;
  \item [(d)] $\mathcal{L}_{\mathcal{A}} \ast_{Ht} \mathcal{L}_{\mathcal{A}}=\mathcal{L}_{\mathcal{A}}$ and $\mathcal{R}_{\mathcal{A}} \ast_{Ht} \mathcal{R}_{\mathcal{A}}=\mathcal{R}_{\mathcal{A}}$;
  %\item [(5)] $(\mathcal{L}_{\mathcal{A}})^ \ast = \mathcal{L}_{\mathcal{A}}$ and $(\mathcal{R}_{\mathcal{A}})^ \ast = \mathcal{R}_{\mathcal{A}}$, that is, $\mathcal{L}_{\mathcal{A}}$ and $\mathcal{R}_{\mathcal{A}} $ are  Hermitian;
  \item [(e)] $(\mathcal{L}_{\mathcal{A}})^\dagger = \mathcal{L}_{\mathcal{A}}$ and $(\mathcal{R}_{\mathcal{A}})^\dagger = \mathcal{R}_{\mathcal{A}}$;
  \item [(f)] $\mathcal{L}_{\mathcal{A}} =  \mathcal{L}_{ \mathcal{A}^\ast  \ast_{Ht} {\mathcal{A}}}$ and $\mathcal{R}_{\mathcal{A}} =  \mathcal{R}_{ \mathcal{A}  \ast_{Ht} {\mathcal{A}^\ast}}$.
\end{itemize}
\end{proposition}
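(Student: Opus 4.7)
The plan is to derive each of the six identities directly from the four defining conditions of the Moore-Penrose inverse in Definition \ref{defHtMP}, combined with the identity $\mathcal{A}\ast_{Ht}\mathcal{I}=\mathcal{I}\ast_{Ht}\mathcal{A}=\mathcal{A}$ and the distributive law in Proposition \ref{Htdistr}. Almost everything reduces to algebraic manipulation of expressions of the form $\mathcal{I}-\mathcal{A}^\dagger\ast_{Ht}\mathcal{A}$ and $\mathcal{I}-\mathcal{A}\ast_{Ht}\mathcal{A}^\dagger$, so the main bookkeeping is to keep track of which MP condition is being invoked at each step.

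For part (a), I would expand $\mathcal{A}\ast_{Ht}\mathcal{L}_{\mathcal{A}}=\mathcal{A}-\mathcal{A}\ast_{Ht}\mathcal{A}^\dagger\ast_{Ht}\mathcal{A}$ and then apply condition (i) of Definition \ref{defHtMP} to collapse this to $\mathcal{O}$; the identity $\mathcal{R}_{\mathcal{A}}\ast_{Ht}\mathcal{A}=\mathcal{O}$ is strictly parallel. For (b), an analogous expansion yields $\mathcal{A}^\dagger-\mathcal{A}^\dagger\ast_{Ht}\mathcal{A}\ast_{Ht}\mathcal{A}^\dagger$, which vanishes by condition (ii). Part (c) uses Proposition \ref{proRBMP}(b) to rewrite $(\mathcal{A}^\ast)^\dagger=(\mathcal{A}^\dagger)^\ast$ and then invokes condition (iii) (which says $\mathcal{A}\ast_{Ht}\mathcal{A}^\dagger$ is Hermitian) to conclude $\mathcal{L}_{\mathcal{A}^\ast}=\mathcal{R}_{\mathcal{A}}^{\ast}=\mathcal{R}_{\mathcal{A}}$; the companion identity is obtained using condition (iv).

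For (d), I would expand $\mathcal{L}_{\mathcal{A}}\ast_{Ht}\mathcal{L}_{\mathcal{A}}$ using distributivity, producing the cross term $\mathcal{A}^\dagger\ast_{Ht}\mathcal{A}\ast_{Ht}\mathcal{A}^\dagger\ast_{Ht}\mathcal{A}$ which collapses via condition (ii) to $\mathcal{A}^\dagger\ast_{Ht}\mathcal{A}$, leaving $\mathcal{L}_{\mathcal{A}}$; $\mathcal{R}_{\mathcal{A}}$ is handled symmetrically. Part (e) is then immediate: combining (c) and (d) shows that $\mathcal{L}_{\mathcal{A}}$ and $\mathcal{R}_{\mathcal{A}}$ are Hermitian and idempotent, so Proposition \ref{proRBMP}(n) gives $(\mathcal{L}_{\mathcal{A}})^\dagger=\mathcal{L}_{\mathcal{A}}$ and $(\mathcal{R}_{\mathcal{A}})^\dagger=\mathcal{R}_{\mathcal{A}}$.

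The step that requires most care is (f). I plan to use Proposition \ref{proRBMP}(f), which gives $\mathcal{A}^\dagger=(\mathcal{A}^\ast\ast_{Ht}\mathcal{A})^\dagger\ast_{Ht}\mathcal{A}^\ast$, and right-multiply by $\mathcal{A}$ to obtain $\mathcal{A}^\dagger\ast_{Ht}\mathcal{A}=(\mathcal{A}^\ast\ast_{Ht}\mathcal{A})^\dagger\ast_{Ht}(\mathcal{A}^\ast\ast_{Ht}\mathcal{A})$, so that subtracting from $\mathcal{I}$ yields $\mathcal{L}_{\mathcal{A}}=\mathcal{L}_{\mathcal{A}^\ast\ast_{Ht}\mathcal{A}}$; the identity $\mathcal{R}_{\mathcal{A}}=\mathcal{R}_{\mathcal{A}\ast_{Ht}\mathcal{A}^\ast}$ follows by the dual expression $\mathcal{A}^\dagger=\mathcal{A}^\ast\ast_{Ht}(\mathcal{A}\ast_{Ht}\mathcal{A}^\ast)^\dagger$, again from Proposition \ref{proRBMP}(f). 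No analytic obstacle is expected; the only subtle point is ensuring that the appeals to Proposition \ref{proRBMP} are to parts that have themselves been established independently, which they have.
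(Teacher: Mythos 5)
Your proposal is correct and follows essentially the same route as the paper's proof: parts (a), (b), (d) by direct expansion using the MP axioms, part (c) via $(\mathcal{A}^\ast)^\dagger=(\mathcal{A}^\dagger)^\ast$ together with the Hermitian conditions (iii)/(iv), and part (e) from (c), (d) and Proposition \ref{proRBMP}(n). The only (immaterial) difference is in (f), where the paper expands $(\mathcal{A}^\ast\ast_{Ht}\mathcal{A})^\dagger$ as $\mathcal{A}^\dagger\ast_{Ht}(\mathcal{A}^\dagger)^\ast$ via Proposition \ref{proRBMP}(c) and then collapses using condition (ii), whereas you invoke Proposition \ref{proRBMP}(f) directly; both arguments reduce to the same identity $(\mathcal{A}^\ast\ast_{Ht}\mathcal{A})^\dagger\ast_{Ht}(\mathcal{A}^\ast\ast_{Ht}\mathcal{A})=\mathcal{A}^\dagger\ast_{Ht}\mathcal{A}$.
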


\begin{proof} By straightforward  calculations, one can prove that (a), (b) and (d) hold.

For (c),  the first equality follows from
$$\mathcal{L}_{\mathcal{A}^\ast} = \mathcal{I}-(\mathcal{A}^\ast)^\dagger  \ast_{Ht}  {\mathcal{A}^ \ast}=\mathcal{I}- (\mathcal{A} \ast_{Ht}  \mathcal{A}^\dagger)^\ast
=(\mathcal{I}- \mathcal{A} \ast_{Ht}  \mathcal{A}^\dagger)^\ast =  (\mathcal{R}_{\mathcal{A}})^\ast$$ and
$$\mathcal{L}_{\mathcal{A}^\ast} = \mathcal{I}-(\mathcal{A}^\ast)^\dagger  \ast_{Ht}  {\mathcal{A}^ \ast}=\mathcal{I}- (\mathcal{A} \ast_{Ht}  \mathcal{A}^\dagger)^\ast
=\mathcal{I}- \mathcal{A} \ast_{Ht}  \mathcal{A}^\dagger =  \mathcal{R}_{\mathcal{A}}.$$
In an analogous manner, one can prove the second part.

For (e), it follows immediately from (c), (d) and Proposition \ref{proRBMP}.

For (f), % by (b) and (c) of Proposition \ref{proRBMP} we have $(\mathcal{A}^\ast \ast_{Ht} \mathcal{A})^\dagger= \mathcal{A}^\dagger \ast_{Ht}  (\mathcal{A}^\ast)^\dagger$. Hence
$ \mathcal{L}_{ \mathcal{A}^\ast  \ast_{Ht} {\mathcal{A}}} = \mathcal{I}-( \mathcal{A}^\ast  \ast_{Ht} {\mathcal{A}} )^\dagger  \ast_{Ht} ( \mathcal{A}^\ast  \ast_{Ht} {\mathcal{A}}) = \mathcal{I}-   {\mathcal{A}} ^\dagger  \ast_{Ht}  ({\mathcal{A}} ^\dagger)^\ast     \ast_{Ht}  \mathcal{A}^\ast  \ast_{Ht} {\mathcal{A}} = \mathcal{I}-   {\mathcal{A}} ^\dagger  \ast_{Ht}  {\mathcal{A}}     \ast_{Ht}  \mathcal{A}^\dagger  \ast_{Ht} {\mathcal{A}} = \mathcal{I}-   {\mathcal{A}} ^\dagger  \ast_{Ht}  {\mathcal{A}} =\mathcal{L}_{\mathcal{A}}.$  The second part  can be proved in a similar way.
\end{proof}

\section{  Reduced biquaternion tensor equation $\mathcal{A} \ast_{Ht} \mathcal{X}=\mathcal{B}$ }

We mainly discuss the consistency of    RB tensor equation
\begin{equation}\label{EQAXB}
  \mathcal{A} \ast_{Ht} \mathcal{X}=\mathcal{B}
\end{equation}
by using the MP inverse demonstrated in Section \ref{MP}.
   The solvability conditions  are found and    a general expression of the solutions is provided  when the solutions exist. Except that, we derive the least-squares solutions/the minimal norm least-squares solution to \eqref{EQAXB}. Furthermore, we also derive solvability conditions and the general solutions involving Hermicity.
One of the applications of solving this tensor equations is color video deblurring given in Section 6.

\begin{theorem}
\label{HtEQ}
Let $\mathcal{A} \in \mathbb{H}_c^{n_1\times n_2 \times n_3}$ and $\mathcal{B} \in \mathbb{H}_c^{n_1\times n_4 \times n_3}.$
Then     equation (\ref{EQAXB})
 has solutions if and only if   $\mathcal{R}_{\mathcal{A}} \ast_{Ht} \mathcal{B}= \mathcal{O} $. Furthermore, the general  solutions to   equation (\ref{EQAXB})
can be written as
\begin{equation}\label{EQ}
\mathcal{X} =\mathcal{A} ^\dagger \ast_{Ht}  \mathcal{B} + \mathcal{L}_{\mathcal{A}} \ast_{Ht} \mathcal{Y},
\end{equation}
with $\mathcal{Y} $ be any reduced biquaternion  tensor with compatible size.
\end{theorem}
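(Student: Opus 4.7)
The plan is to mimic the classical matrix argument for solvability of $AX=B$ via the Moore--Penrose inverse, using the Ht-product and the tensor MP-inverse machinery already developed. The crux is to exploit the identities $\mathcal{R}_{\mathcal{A}}\ast_{Ht}\mathcal{A}=\mathcal{O}$ and $\mathcal{A}\ast_{Ht}\mathcal{L}_{\mathcal{A}}=\mathcal{O}$ from Proposition \ref{proLRRBMP}(a), together with associativity (Proposition \ref{Htassocia}) and the defining identity $\mathcal{A}\ast_{Ht}\mathcal{A}^\dagger\ast_{Ht}\mathcal{A}=\mathcal{A}$ of Definition \ref{defHtMP}.

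For necessity of $\mathcal{R}_{\mathcal{A}}\ast_{Ht}\mathcal{B}=\mathcal{O}$, I would assume $\mathcal{X}$ solves $\mathcal{A}\ast_{Ht}\mathcal{X}=\mathcal{B}$ and left-multiply by $\mathcal{R}_{\mathcal{A}}=\mathcal{I}-\mathcal{A}\ast_{Ht}\mathcal{A}^\dagger$; by associativity and Proposition \ref{proLRRBMP}(a), $\mathcal{R}_{\mathcal{A}}\ast_{Ht}\mathcal{B}=\mathcal{R}_{\mathcal{A}}\ast_{Ht}\mathcal{A}\ast_{Ht}\mathcal{X}=\mathcal{O}$. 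For sufficiency, note that $\mathcal{R}_{\mathcal{A}}\ast_{Ht}\mathcal{B}=\mathcal{O}$ is equivalent to $\mathcal{B}=\mathcal{A}\ast_{Ht}\mathcal{A}^\dagger\ast_{Ht}\mathcal{B}$, so $\mathcal{X}_0=\mathcal{A}^\dagger\ast_{Ht}\mathcal{B}$ is a particular solution.

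For the general-solution form, I would verify both inclusions. First, for any $\mathcal{Y}$ of compatible size, the candidate $\mathcal{X}=\mathcal{A}^\dagger\ast_{Ht}\mathcal{B}+\mathcal{L}_{\mathcal{A}}\ast_{Ht}\mathcal{Y}$ satisfies
\[
\mathcal{A}\ast_{Ht}\mathcal{X}=\mathcal{A}\ast_{Ht}\mathcal{A}^\dagger\ast_{Ht}\mathcal{B}+\mathcal{A}\ast_{Ht}\mathcal{L}_{\mathcal{A}}\ast_{Ht}\mathcal{Y}=\mathcal{B}+\mathcal{O}=\mathcal{B},
\]
using the assumed consistency condition in the first term and Proposition \ref{proLRRBMP}(a) in the second. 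Conversely, if $\mathcal{X}$ is any solution, then from $\mathcal{I}=\mathcal{A}^\dagger\ast_{Ht}\mathcal{A}+\mathcal{L}_{\mathcal{A}}$ and $\mathcal{A}\ast_{Ht}\mathcal{X}=\mathcal{B}$ I obtain
\[
\mathcal{X}=\mathcal{A}^\dagger\ast_{Ht}\mathcal{A}\ast_{Ht}\mathcal{X}+\mathcal{L}_{\mathcal{A}}\ast_{Ht}\mathcal{X}=\mathcal{A}^\dagger\ast_{Ht}\mathcal{B}+\mathcal{L}_{\mathcal{A}}\ast_{Ht}\mathcal{X},
\]
which has the claimed form with $\mathcal{Y}:=\mathcal{X}$.

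Honestly I do not anticipate a real obstacle here: every nontrivial step has been pre-packaged either in Definition \ref{defHtMP}, Proposition \ref{Htassocia}, or Proposition \ref{proLRRBMP}. The only care needed is to always left-multiply (never right-multiply) so that the Ht-product dimensions match and the noncommutative arithmetic over $\mathbb{H}_c$ does not bite; since we never transpose or reorder factors this should pose no issue.
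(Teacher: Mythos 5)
Your proposal is correct and follows essentially the same route as the paper: necessity by left-multiplying the equation by $\mathcal{R}_{\mathcal{A}}$, sufficiency by exhibiting $\mathcal{A}^\dagger\ast_{Ht}\mathcal{B}$ as a particular solution, and the general form via the identity $\mathcal{X}=\mathcal{A}^\dagger\ast_{Ht}\mathcal{B}+(\mathcal{I}-\mathcal{A}^\dagger\ast_{Ht}\mathcal{A})\ast_{Ht}\mathcal{X}$. (A minor aside: $\mathbb{H}_c$ is in fact commutative; it is only the Ht-product of tensors that fails to commute, but your caution about multiplying on the correct side is still well placed.)
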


\begin{proof}
Assume that  $\mathcal{C}$ is a solution to  reduced biquaternion tensor    equation (\ref{EQAXB}).
Then by Proposition \ref{Htassocia} and Proposition \ref{proLRRBMP},  we have
$$
 \mathcal{R}_{\mathcal{A}} \ast_{Ht} \mathcal{B}= \mathcal{R}_{\mathcal{A}}  \ast_{Ht} \mathcal{A} \ast_{Ht} \mathcal{C}
=\mathcal{O}.
$$
%$$\mathcal{R}_{\mathcal{A}} \ast_{Ht} \mathcal{B}= (\mathcal{I}-\mathcal{A}  \ast_{Ht} {\mathcal{A}}^\dagger)  \ast_{Ht} \mathcal{B}= (\mathcal{I}-\mathcal{A}  \ast_{Ht} {\mathcal{A}}^\dagger)  \ast_{Ht} \mathcal{A} \ast_{Ht} \mathcal{C}=\mathcal{O}.$$

Conversely, if $\mathcal{R}_{\mathcal{A}} \ast_{Ht} \mathcal{B}= \mathcal{O}$,
it is easy to see that
$ \mathcal{X}_0=\mathcal{A}^\dagger \ast_{Ht} \mathcal{B}$  is a solution to     equation (\ref{EQAXB}), %$\mathcal{A} \ast_{Ht} \mathcal{X}=\mathcal{B},$
and so is   $\mathcal{A}^\dagger \ast_{Ht}  \mathcal{B} + \mathcal{L}_{\mathcal{A}} \ast_{Ht} \mathcal{Y}$.
 %$\mathcal{A}^\dagger \ast_{Ht}  \mathcal{B} + (\mathcal{I}-{\mathcal{A}}^\dagger  \ast_{Ht} \mathcal{A}) \ast_{Ht} \mathcal{Y}$.

Next we show each solution $\mathcal{X}$  of     equation (\ref{EQAXB})
has the form of $\mathcal{X} =\mathcal{A} ^\dagger \ast_{Ht}  \mathcal{B} +\mathcal{L}_{\mathcal{A}} \ast_{Ht} \mathcal{X}$. Note that $ \mathcal{A} \ast_{Ht} \mathcal{X}=\mathcal{B}$,
thus $\mathcal{X} = \mathcal{A}^\dagger \ast_{Ht}  \mathcal{B} -\mathcal{A}^\dagger \ast_{Ht} \mathcal{A} \ast_{Ht} \mathcal{X}+\mathcal{X}= \mathcal{A}^\dagger \ast_{Ht}  \mathcal{B}+(\mathcal{I}-{\mathcal{A}}^\dagger  \ast_{Ht} \mathcal{A}) \ast_{Ht} \mathcal{X},$ as required.
Based on the above, we can conclude that \eqref{EQ} is the general expression of the solutions.
 \end{proof}

\begin{theorem}
\label{HtEQH}
Let $\mathcal{A},  \mathcal{B} \in \mathbb{H}_c^{n_1\times n_2 \times n_3}$.  Then equation (\ref{EQAXB})
has a Hermitian solution if and only if
$\mathcal{A} \ast_{Ht} \mathcal{B}^\ast=\mathcal{B} \ast_{Ht} \mathcal{A}^\ast $
 and
 $\mathcal{R}_{\mathcal{A}} \ast_{Ht}  \mathcal{B} = \mathcal{O}.$ Moreover, if (\ref{EQAXB}) is solvable, then the general Hermitian solutions to the reduced  biquaternion tensor equation (\ref{EQAXB})
can be written as
\begin{equation}\label{EQH}
\mathcal{X} =\mathcal{A}^\dagger  \ast_{Ht} \mathcal{B}+(\mathcal{A}^\dagger  \ast_{Ht} \mathcal{B})^\ast-\mathcal{A}^\dagger  \ast_{Ht} ( \mathcal{A}\ast_{Ht}  \mathcal{B}^\ast) \ast_{Ht}  (\mathcal{A}^\dagger)^\ast + \mathcal{L}_{\mathcal{A}} \ast_{Ht} \mathcal{U} \ast_{Ht}  \mathcal{L}_{\mathcal{A}},
\end{equation}
where $\mathcal{U}=\mathcal{U}^\ast $  is an arbitrary   tensor with appropriate size.
\end{theorem}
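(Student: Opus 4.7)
The plan is to split the proof into three pieces: (i) necessity of the two conditions, (ii) verification that the proposed expression \eqref{EQH} is indeed a Hermitian solution whenever the conditions hold, and (iii) exhaustiveness, i.e., that every Hermitian solution can be written in the stated form.

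For necessity, suppose $\mathcal{X}=\mathcal{X}^\ast$ satisfies $\mathcal{A}\ast_{Ht}\mathcal{X}=\mathcal{B}$. Taking the conjugate transpose via Proposition \ref{Htconjtr} gives $\mathcal{X}\ast_{Ht}\mathcal{A}^\ast=\mathcal{B}^\ast$. Then associativity (Proposition \ref{Htassocia}) yields $\mathcal{A}\ast_{Ht}\mathcal{B}^\ast=\mathcal{A}\ast_{Ht}\mathcal{X}\ast_{Ht}\mathcal{A}^\ast=\mathcal{B}\ast_{Ht}\mathcal{A}^\ast$, and mere solvability together with Theorem \ref{HtEQ} forces $\mathcal{R}_{\mathcal{A}}\ast_{Ht}\mathcal{B}=\mathcal{O}$.

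For sufficiency, I would plug the candidate into $\mathcal{A}\ast_{Ht}(\cdot)$ and check four terms. The last term drops because $\mathcal{A}\ast_{Ht}\mathcal{L}_{\mathcal{A}}=\mathcal{O}$ (Proposition \ref{proLRRBMP}(a)). The first term gives $\mathcal{A}\ast_{Ht}\mathcal{A}^{\dagger}\ast_{Ht}\mathcal{B}=\mathcal{B}$ using $\mathcal{R}_{\mathcal{A}}\ast_{Ht}\mathcal{B}=\mathcal{O}$. The second term equals $\mathcal{A}\ast_{Ht}\mathcal{B}^\ast\ast_{Ht}(\mathcal{A}^{\dagger})^\ast$, and using $\mathcal{A}\ast_{Ht}\mathcal{B}^\ast=\mathcal{B}\ast_{Ht}\mathcal{A}^\ast$ together with the Hermicity of $\mathcal{A}^{\dagger}\ast_{Ht}\mathcal{A}$ turns this into $\mathcal{B}\ast_{Ht}\mathcal{A}^{\dagger}\ast_{Ht}\mathcal{A}$; the third term, after applying $\mathcal{A}\ast_{Ht}\mathcal{A}^{\dagger}\ast_{Ht}\mathcal{A}=\mathcal{A}$ and the same identity, becomes $-\mathcal{B}\ast_{Ht}\mathcal{A}^{\dagger}\ast_{Ht}\mathcal{A}$. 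These cancel, leaving $\mathcal{B}$. Hermicity of the candidate is handled term-by-term: the first and second terms are conjugate transposes of each other, the third is self-adjoint precisely because the hypothesis $\mathcal{A}\ast_{Ht}\mathcal{B}^\ast=\mathcal{B}\ast_{Ht}\mathcal{A}^\ast$ says $\mathcal{A}\ast_{Ht}\mathcal{B}^\ast$ is Hermitian, and the last is self-adjoint since $\mathcal{L}_{\mathcal{A}}^\ast=\mathcal{L}_{\mathcal{A}}$ by Proposition \ref{proLRRBMP}(c) and $\mathcal{U}=\mathcal{U}^\ast$.

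For generality, let $\mathcal{X}=\mathcal{X}^\ast$ be any Hermitian solution and let $\mathcal{X}_0$ denote the first three summands of \eqref{EQH}. Since both $\mathcal{X}$ and $\mathcal{X}_0$ solve \eqref{EQAXB}, their difference lies in the homogeneous solution set, so by Theorem \ref{HtEQ} we can write $\mathcal{X}-\mathcal{X}_0=\mathcal{L}_{\mathcal{A}}\ast_{Ht}\mathcal{Y}$ for some $\mathcal{Y}$. Idempotence $\mathcal{L}_{\mathcal{A}}\ast_{Ht}\mathcal{L}_{\mathcal{A}}=\mathcal{L}_{\mathcal{A}}$ together with Hermicity of $\mathcal{X}-\mathcal{X}_0$ gives $\mathcal{L}_{\mathcal{A}}\ast_{Ht}\mathcal{Y}=\mathcal{L}_{\mathcal{A}}\ast_{Ht}\mathcal{Y}\ast_{Ht}\mathcal{L}_{\mathcal{A}}$, and taking $\mathcal{U}=\tfrac12(\mathcal{Y}+\mathcal{Y}^\ast)$ produces a Hermitian $\mathcal{U}$ with $\mathcal{L}_{\mathcal{A}}\ast_{Ht}\mathcal{U}\ast_{Ht}\mathcal{L}_{\mathcal{A}}=\mathcal{L}_{\mathcal{A}}\ast_{Ht}\mathcal{Y}\ast_{Ht}\mathcal{L}_{\mathcal{A}}$, thanks to the Hermicity of $\mathcal{X}-\mathcal{X}_0$.

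The main obstacle is keeping track of the interplay between the hypothesis $\mathcal{A}\ast_{Ht}\mathcal{B}^\ast=\mathcal{B}\ast_{Ht}\mathcal{A}^\ast$ and the idempotent Hermitian factors $\mathcal{A}\ast_{Ht}\mathcal{A}^{\dagger}$ and $\mathcal{A}^{\dagger}\ast_{Ht}\mathcal{A}$ in the third summand; the cancellation between the second and third summands is the linchpin of the proof and is not obvious until one uses both the self-adjointness $(\mathcal{A}^{\dagger}\ast_{Ht}\mathcal{A})^\ast=\mathcal{A}^{\dagger}\ast_{Ht}\mathcal{A}$ (Moore--Penrose axiom (iv)) and the commutation hypothesis together.
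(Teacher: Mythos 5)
Your proof is correct, and the necessity and sufficiency parts follow the same lines as the paper (the paper merely asserts that the candidate in \eqref{EQH} is a Hermitian solution, whereas you verify it term by term; your verification is accurate, including the key cancellation of the second and third summands via $\mathcal{A}\ast_{Ht}\mathcal{B}^\ast=\mathcal{B}\ast_{Ht}\mathcal{A}^\ast$ and $(\mathcal{A}^\dagger\ast_{Ht}\mathcal{A})^\ast=\mathcal{A}^\dagger\ast_{Ht}\mathcal{A}$). Where you genuinely diverge is the exhaustiveness step. The paper takes an arbitrary Hermitian solution $\mathcal{X}_0$, substitutes $\mathcal{U}=\mathcal{X}_0$ into \eqref{EQH}, and shows by a chain of Moore--Penrose identities that the entire right-hand side collapses to $\mathcal{X}_0$. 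You instead observe that the first three summands already form a particular Hermitian solution, invoke Theorem \ref{HtEQ} to write the difference between an arbitrary Hermitian solution and this particular one as $\mathcal{L}_{\mathcal{A}}\ast_{Ht}\mathcal{Y}$, and then use idempotence and Hermiticity of $\mathcal{L}_{\mathcal{A}}$ together with the symmetrization $\mathcal{U}=\tfrac12(\mathcal{Y}+\mathcal{Y}^\ast)$ to land in the form $\mathcal{L}_{\mathcal{A}}\ast_{Ht}\mathcal{U}\ast_{Ht}\mathcal{L}_{\mathcal{A}}$ with $\mathcal{U}=\mathcal{U}^\ast$. Your route reuses the linear-equation machinery already established and makes transparent \emph{why} the free term has the sandwiched form; the paper's route is a self-contained direct computation that avoids having to argue that $\mathcal{L}_{\mathcal{A}}\ast_{Ht}\mathcal{Y}=\mathcal{L}_{\mathcal{A}}\ast_{Ht}\mathcal{Y}\ast_{Ht}\mathcal{L}_{\mathcal{A}}$, but it is less illuminating about where the parametrization comes from. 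Both are complete; no gaps.
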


\begin{proof}
If     equation (\ref{EQAXB})
has a  Hermitian solution $\mathcal{C} $,
then  by  Propositions \ref{Htassocia},  \ref{Htconjtr} and  Definition \ref{defHtMP},  we obtain
\[
\mathcal{A} \ast_{Ht} \mathcal{B}^\ast= \mathcal{A} \ast_{Ht} \mathcal{C}^\ast \ast_{Ht} \mathcal{A}^\ast = \mathcal{A} \ast_{Ht} \mathcal{C}  \ast_{Ht} \mathcal{A}^\ast = \mathcal{B} \ast_{Ht} \mathcal{A}^\ast,
\]
 and  $\mathcal{R}_{\mathcal{A}} \ast_{Ht} \mathcal{B} =\mathcal{R}_{\mathcal{A}} \ast_{Ht}  \mathcal{A} \ast_{Ht} \mathcal{C} =  \mathcal{O} .$

On the other hand, assume that $\mathcal{A} \ast_{Ht} \mathcal{B}^\ast=\mathcal{B} \ast_{Ht} \mathcal{A}^\ast $
 and $R_{\mathcal{A}} \ast_{Ht}  \mathcal{B} = \mathcal{O} $ hold.
 Then by applying Propositions \ref{Htassocia} and   \ref{Htconjtr}, we get that
 \[
 \mathcal{A}^\dagger  \ast_{Ht} \mathcal{B}+(\mathcal{A}^\dagger  \ast_{Ht} \mathcal{B})^\ast-\mathcal{A}^\dagger  \ast_{Ht} ( \mathcal{A}\ast_{Ht}  \mathcal{B}^\ast) \ast_{Ht}  (\mathcal{A}^\dagger)^\ast + \mathcal{L}_{\mathcal{A}}  \ast_{Ht} \mathcal{U} \ast_{Ht}   \mathcal{L}_{\mathcal{A}}
 \]
 is a Hermitian solution to   reduced  biquaternion  tensor equation (\ref{EQAXB}).

Next we show that each solution of   reduced  biquaternion  tensor equation (\ref{EQAXB})
should be in the form of  (\ref{EQH}).
Assume $\mathcal{X}_0$ is an arbitrary Hermitian solution to    tensor equation (\ref{EQAXB}). Setting  $\mathcal{U}=\mathcal{X}_0$  yields
\begin{align*}
  \mathcal{X}   = & \mathcal{A}^\dagger  \ast_{Ht} \mathcal{B}+(\mathcal{A}^\dagger  \ast_{Ht} \mathcal{B})^\ast-\mathcal{A}^\dagger  \ast_{Ht} ( \mathcal{A}\ast_{Ht}  \mathcal{B}^\ast) \ast_{Ht}  (\mathcal{A}^\dagger)^\ast + \mathcal{L}_{\mathcal{A}} \ast_{Ht} \mathcal{X}_0 \ast_{Ht}   \mathcal{L}_{\mathcal{A}}\\
    = & \mathcal{A}^\dagger  \ast_{Ht} \mathcal{B}+(\mathcal{A}^\dagger  \ast_{Ht} \mathcal{B})^\ast-\mathcal{A}^\dagger  \ast_{Ht} ( \mathcal{A}\ast_{Ht}  \mathcal{B}^\ast) \ast_{Ht}  (\mathcal{A}^\dagger)^\ast  \\
    & + ( \mathcal{X}_0 -\mathcal{A}^\dagger  \ast_{Ht}  \mathcal{A} \ast_{Ht}  \mathcal{X}_0 ) \ast_{Ht} (\mathcal{I}- \mathcal{A}^\dagger  \ast_{Ht}  \mathcal{A})\\
    = & (\mathcal{A}^\dagger  \ast_{Ht} \mathcal{B})^\ast-\mathcal{A}^\dagger  \ast_{Ht} ( \mathcal{A}\ast_{Ht}  \mathcal{B}^\ast) \ast_{Ht}  (\mathcal{A}^\dagger)^\ast  \\
    & + \mathcal{X}_0 -\mathcal{X}_0 \ast_{Ht} (\mathcal{A}^\dagger  \ast_{Ht}  \mathcal{A})+ \mathcal{A}^\dagger  \ast_{Ht}  \mathcal{B}   \ast_{Ht}   \mathcal{A}^\ast \ast_{Ht}  (\mathcal{A}^\dagger)^\ast\\
   = & (\mathcal{A}^\dagger  \ast_{Ht} \mathcal{A} \ast_{Ht}  \mathcal{X}_0 )^\ast+ \mathcal{X}_0 -\mathcal{X}_0 \ast_{Ht} (\mathcal{A}^\dagger  \ast_{Ht}  \mathcal{A})^\ast\\
   = & \mathcal{X}_0,
\end{align*}
which implies that each Hermitian solution to   reduced  biquaternion  tensor equation (\ref{EQAXB})
can be represented as (\ref{EQH}). Thus,  (\ref{EQH}) is the general expression of Hermitian
solutions to    equation (\ref{EQAXB}).
\end{proof}

Next, we deal with the least-squares problem of  RB tensor equation (\ref{EQAXB}).
% $\mathcal{A} \ast_{Ht} \mathcal{X}=\mathcal{B}$.
For a        tensor   $\mathcal{A}=\mathcal{A}_{1,\mathbf{i}}+\mathbf{j} \mathcal{A}_{\mathbf{j},\mathbf{k}} \in  \mathbb{H}_c^{n_1\times n_2 \times n_3}$
with $\mathcal{A}_{1,\mathbf{i}}, \mathcal{A}_{\mathbf{j},\mathbf{k}} \in \mathbb{C}^{n_1\times n_2 \times n_3} $, we define its Frobenius norm   as
\begin{equation}
\label{frob}
 \|  \mathcal{A} \|_F^2= \| \textbf{Vec}(\mathcal{A}) \|_F ^2= \| \textbf{Vec}(\mathcal{A}_{1,\mathbf{i}}) \|_F ^2+\| \textbf{Vec}(\mathcal{A}_{\mathbf{j},\mathbf{k}}) \|_F ^2.
\end{equation}
where 
 $\textbf{Vec}(\mathcal{A}_{1,\mathbf{i}}) =(c_{ij})_{n_1n_3\times {n_2}},$
 $\textbf{Vec}(\mathcal{A}_{\mathbf{j},\mathbf{k}}) =(d_{ij})_{n_1n_3\times {n_2}}.$

%%%%%
\iffalse
\begin{equation*}\label{inftynorm}
 \| \mathcal{A}  \|_{\infty} = \mathop{\max}_{1\leq i \leq n_1n_3 }\sum_{j=1}^{{n_2}}(|c_{ij}|+|d_{ij}|), 
\end{equation*}
\fi
%%%%%

 \begin{theorem}
\label{HtF}
Let $\mathcal{A} \in \mathbb{H}_c^{n_1\times n_2 \times n_3}.$   Then $  \|   \mathcal{A} \|_F =\frac{1 }{\sqrt{n_3}}  \| \textbf{diag}(\hat{\mathcal{A}}) \|_F$.
 \end{theorem}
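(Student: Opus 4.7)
The plan is to bridge the tensor Frobenius norm (as defined in equation (\ref{frob})) and the block-diagonal matrix Frobenius norm by passing through the vectorized form, using the fact that the (scaled) DFT matrix is unitary.

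First, I would split $\mathcal{A}=\mathcal{A}_{1,\mathbf{i}}+\mathbf{j}\mathcal{A}_{\mathbf{j},\mathbf{k}}$ and apply (\ref{hatA}) twice, once to each complex component (using $F_{n_3}\mathbf{j}=\mathbf{j}F_{n_3}$ exactly as in the proof of Theorem \ref{Htthm}), to obtain
\[
\textbf{Vec}(\widehat{\mathcal{A}}_{1,\mathbf{i}})=\sqrt{n_3}(F_{n_3}\otimes I_{n_1})\textbf{Vec}(\mathcal{A}_{1,\mathbf{i}}),\qquad
\textbf{Vec}(\widehat{\mathcal{A}}_{\mathbf{j},\mathbf{k}})=\sqrt{n_3}(F_{n_3}\otimes I_{n_1})\textbf{Vec}(\mathcal{A}_{\mathbf{j},\mathbf{k}}).
\]
Because $F_{n_3}$ is unitary, $(F_{n_3}\otimes I_{n_1})$ is a unitary complex matrix, hence preserves the complex Frobenius norm. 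Applying this to both identities and summing gives $\|\textbf{Vec}(\widehat{\mathcal{A}}_{1,\mathbf{i}})\|_F^2+\|\textbf{Vec}(\widehat{\mathcal{A}}_{\mathbf{j},\mathbf{k}})\|_F^2=n_3\bigl(\|\textbf{Vec}(\mathcal{A}_{1,\mathbf{i}})\|_F^2+\|\textbf{Vec}(\mathcal{A}_{\mathbf{j},\mathbf{k}})\|_F^2\bigr)$, i.e., by the definition (\ref{frob}),
\[
\|\widehat{\mathcal{A}}\|_F^2=n_3\,\|\mathcal{A}\|_F^2.
\]

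Next I would relate $\|\textbf{diag}(\widehat{\mathcal{A}})\|_F$ to $\|\widehat{\mathcal{A}}\|_F$. Since $\textbf{diag}(\widehat{\mathcal{A}})$ is block diagonal with blocks $\widehat{A}^{(1)},\dots,\widehat{A}^{(n_3)}$ and all off-diagonal entries zero, and since the Frobenius norm of an RB matrix equals the sum of squared moduli of its entries (which in turn equals $\|(\cdot)_{1,\mathbf{i}}\|_F^2+\|(\cdot)_{\mathbf{j},\mathbf{k}}\|_F^2$ by the decomposition $|q|^2=|q_a|^2+|q_b|^2$ noted earlier), we get
\[
\|\textbf{diag}(\widehat{\mathcal{A}})\|_F^2=\sum_{k=1}^{n_3}\|\widehat{A}^{(k)}\|_F^2=\|\textbf{Vec}(\widehat{\mathcal{A}})\|_F^2=\|\widehat{\mathcal{A}}\|_F^2.
\]
Combining the two identities yields $\|\textbf{diag}(\widehat{\mathcal{A}})\|_F^2=n_3\|\mathcal{A}\|_F^2$, and taking square roots gives the claim.

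The main (minor) obstacle is the bookkeeping: carefully distinguishing the three notions of Frobenius norm that appear (for a complex matrix, for an RB matrix, and for an RB tensor via (\ref{frob})), and checking that the norm preservation by $(F_{n_3}\otimes I_{n_1})$ survives the $1+\mathbf{j}$ decomposition. Once one observes that the two complex ``slabs'' $\mathcal{A}_{1,\mathbf{i}}$ and $\mathcal{A}_{\mathbf{j},\mathbf{k}}$ are transformed independently by the same unitary $(F_{n_3}\otimes I_{n_1})$, and that the block-diagonal packing only reshuffles entries without duplicating them, the entire argument reduces to applying unitary invariance once.
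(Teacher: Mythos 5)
Your proposal is correct and follows essentially the same route as the paper's proof: both rest on the identity $\textbf{Vec}(\widehat{\mathcal{A}})=\sqrt{n_3}(F_{n_3}\otimes I_{n_1})\textbf{Vec}(\mathcal{A})$ from (\ref{hatA}), the splitting into the two complex slabs via $F_{n_3}\mathbf{j}=\mathbf{j}F_{n_3}$, unitary invariance of the complex Frobenius norm, and the observation that $\|\textbf{diag}(\widehat{\mathcal{A}})\|_F=\|\textbf{Vec}(\widehat{\mathcal{A}})\|_F$. The only difference is cosmetic ordering (you compute $\|\widehat{\mathcal{A}}\|_F^2=n_3\|\mathcal{A}\|_F^2$ first and then pass to the block-diagonal matrix, while the paper runs the chain in the opposite direction).
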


\begin{proof}
 By applying identities (\ref{hatA}), (\ref{frob})  and  $F_{n_3} \mathbf{j}=\mathbf{j}F_{n_3}$  as well as the invariant property of Frobenius norm under a unitary transformation for
  complex matrices, we have
\begin{align*}
   \| \textbf{diag}(\hat{\mathcal{A}}) \|_F^2 &=\| \textbf{Vec}(\hat{\mathcal{A}}) \|_F^2 \\
   & =  \| \sqrt{n_3} (F_{n_3} \otimes I_{n_1})  \textbf{Vec}(\mathcal{A}_{1,\mathbf{i}}) +     \sqrt{n_3} (F_{n_3} \otimes I_{n_1})  \mathbf{j} \textbf{Vec}(\mathcal{A}_{\mathbf{j},\mathbf{k}})   \|_F^2 \\
   &=\| \sqrt{n_3} (F_{n_3} \otimes I_{n_1})  \textbf{Vec}(\mathcal{A}_{1,\mathbf{i}}) \|_F^2 +     \|\sqrt{n_3} (F_{n_3} \otimes I_{n_1})   \textbf{Vec}(\mathcal{A}_{\mathbf{j},\mathbf{k}})   \|_F^2 \\
   & = n_3  \|   \textbf{Vec}(\mathcal{A}_{1,\mathbf{i}})\|_F^2   +
       n_3  \| \textbf{Vec}(\mathcal{A}_{\mathbf{j},\mathbf{k}}) \|_F^2   \\
  %& = n_3  \|   \textbf{Vec}(\mathcal{A}_{1,\mathbf{i}})  +\mathbf{j}  \textbf{Vec}(\mathcal{A}_{\mathbf{j},\mathbf{k}}) \|_F^2 \\
   & = n_3  \|   \textbf{Vec}(\mathcal{A}) \|_F^2 \\
   & = n_3  \|   \mathcal{A} \|_F^2.
\end{align*}
Therefore, $\|   \mathcal{A} \|_F =\frac{1 }{\sqrt{n_3}}  \| \textbf{diag}(\hat{\mathcal{A}}) \|_F $.
\end{proof}

\begin{theorem}
\label{HtLSMP}
Let $\mathcal{A} \in \mathbb{H}_c^{n_1\times n_2 \times n_3}$ and $\mathcal{B} \in \mathbb{H}_c^{n_1\times n_4 \times n_3}.$
Then the least-squares solutions of tensor equation (\ref{EQAXB}) can be  represented as   $\mathcal{X} =\mathcal{A}^\dagger \ast_{Ht}  \mathcal{B}  + (\mathcal{I}- \mathcal{A}^\dagger \ast_{Ht}\mathcal{A})\ast_{Ht} \mathcal{W} $,  where $\mathcal{W} $ is an arbitrary compatible RB tensor,
and the minimum-norm least-squares solution of tensor equation (\ref{EQAXB}) is
$\mathcal{X} =\mathcal{A}^\dagger \ast_{Ht}  \mathcal{B} $.  
\end{theorem}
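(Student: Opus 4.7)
The plan is to reduce the tensor least-squares problem to a block-diagonal matrix least-squares problem using Theorems \ref{Htthm} and \ref{HtF}, solve it at the matrix level with standard MP-inverse theory, and then translate back through the correspondence $\mathcal{A} \leftrightarrow \textbf{diag}(\widehat{\mathcal{A}})$.

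First, I would note that by Theorems \ref{Htthm} and \ref{HtF},
\[
\|\mathcal{A}\ast_{Ht}\mathcal{X}-\mathcal{B}\|_F^2 \;=\; \tfrac{1}{n_3}\,\bigl\|\textbf{diag}(\widehat{\mathcal{A}})\,\textbf{diag}(\widehat{\mathcal{X}}) - \textbf{diag}(\widehat{\mathcal{B}})\bigr\|_F^2,
\]
so minimizing $\|\mathcal{A}\ast_{Ht}\mathcal{X}-\mathcal{B}\|_F$ over $\mathcal{X}\in\mathbb{H}_c^{n_2\times n_4\times n_3}$ is equivalent to minimizing the right-hand side over the block-diagonal RB matrix $\textbf{diag}(\widehat{\mathcal{X}})$, noting that the bijection $\mathcal{X}\mapsto\widehat{\mathcal{X}}\mapsto\textbf{diag}(\widehat{\mathcal{X}})$ respects the block-diagonal structure. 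This reduces the problem to a standard matrix least-squares problem over $\mathbb{H}_c$ for the matrix $M=\textbf{diag}(\widehat{\mathcal{A}})$ with right-hand side $N=\textbf{diag}(\widehat{\mathcal{B}})$.

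Next, I would solve the matrix least-squares problem $\min_Y\|MY-N\|_F$ over $\mathbb{H}_c$. Writing $M=M_1 e_1+M_2 e_2$, $N=N_1 e_1+N_2 e_2$, $Y=Y_1 e_1+Y_2 e_2$, and using $e_1 e_2=0$ together with the splitting of the Frobenius norm across the $e_1,e_2$ components, the problem decouples into two complex least-squares problems $\min_{Y_i}\|M_i Y_i - N_i\|_F$. Classical MP-inverse theory gives the general least-squares solution $Y_i = M_i^{\dagger} N_i + (I-M_i^{\dagger} M_i)Z_i$ and the unique minimum-norm one $Y_i=M_i^{\dagger}N_i$. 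Recombining via $e_1,e_2$ and invoking Theorem \ref{defRBMP} gives the general least-squares solution
\[
Y \;=\; M^{\dagger}N + (I - M^{\dagger}M)Z,
\]
with minimum-norm representative $Y=M^{\dagger}N$, where $Z$ is an arbitrary RB matrix of appropriate size.

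Finally, I would translate back to tensors. The identity $(\textbf{diag}(\widehat{\mathcal{A}}))^{\dagger}=\textbf{diag}(\widehat{\mathcal{A}^{\dagger}})$ was already observed in the proof of Theorem \ref{HtMP}, so combining with Theorem \ref{Htthm} yields
\[
M^{\dagger}N = \textbf{diag}\bigl(\widehat{\mathcal{A}^{\dagger}\ast_{Ht}\mathcal{B}}\bigr), \qquad I-M^{\dagger}M = \textbf{diag}\bigl(\widehat{\mathcal{I}-\mathcal{A}^{\dagger}\ast_{Ht}\mathcal{A}}\bigr),
\]
and writing $Z=\textbf{diag}(\widehat{\mathcal{W}})$ for an arbitrary $\mathcal{W}\in\mathbb{H}_c^{n_2\times n_4\times n_3}$, the first assertion
\[
\mathcal{X} \;=\; \mathcal{A}^{\dagger}\ast_{Ht}\mathcal{B} + (\mathcal{I}-\mathcal{A}^{\dagger}\ast_{Ht}\mathcal{A})\ast_{Ht}\mathcal{W}
\]
follows by applying $\textbf{Fold}$. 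For the minimum-norm claim, Theorem \ref{HtF} shows $\|\mathcal{X}\|_F=\frac{1}{\sqrt{n_3}}\|\textbf{diag}(\widehat{\mathcal{X}})\|_F$, so minimizing $\|\mathcal{X}\|_F$ among least-squares solutions is equivalent to minimizing $\|Y\|_F$, whose unique minimizer $M^{\dagger}N$ corresponds to $\mathcal{A}^{\dagger}\ast_{Ht}\mathcal{B}$. The only mildly delicate step is keeping careful track of the $e_1,e_2$-components when transporting standard least-squares theory from $\mathbb{C}$ to $\mathbb{H}_c$; however, the zero-divisor $e_1 e_2=0$ makes this a clean coordinatewise reduction, so no genuine obstacle arises.
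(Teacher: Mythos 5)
Your proposal is correct and follows essentially the same route as the paper: both reduce the tensor problem to the block-diagonal matrix least-squares problem via Theorems \ref{Htthm} and \ref{HtF}, solve it at the matrix level, and translate back using $(\textbf{diag}(\widehat{\mathcal{A}}))^{\dagger}=\textbf{diag}(\widehat{\mathcal{A}^{\dagger}})$. The only difference is in how the matrix-level facts are justified: you decouple into two complex problems via the idempotents $e_1,e_2$ and invoke classical theory, whereas the paper asserts the general matrix-level solution and instead verifies the minimum-norm claim by an explicit computation showing the cross term in $\|\mathcal{X}_0\|_F^2$ vanishes; both are adequate.
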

\begin{proof}
According to  Theorem \ref{Htthm} and Theorem \ref{HtF}, we have
$$\| \mathcal{A} \ast_{Ht} \mathcal{X}-\mathcal{B} \|_F  =\frac{1 }{\sqrt{n_3}}  \| \textbf{diag}(\hat{\mathcal{A}})  \textbf{diag}(\hat{\mathcal{X}})-\textbf{diag}(\hat{\mathcal{B}}) \|_F. $$
Thus, they arrive the minima at the same time. We know that the the least-squares solutions of RB matrix equation $ \textbf{diag}(\hat{\mathcal{A}})  \textbf{diag}(\hat{\mathcal{X}})=\textbf{diag}(\hat{\mathcal{B}})$ are  
$$ \textbf{diag}(\hat{\mathcal{X}}) = \textbf{diag}(\hat{\mathcal{A}})^\dagger \textbf{diag}(\hat{\mathcal{B}})+[\textbf{diag}(\hat{\mathcal{I}})-\textbf{diag}(\hat{\mathcal{A}})^\dagger \textbf{diag}(\hat{\mathcal{A}})] \textbf{diag}(\hat{\mathcal{W}})$$
with $\textbf{diag}(\hat{\mathcal{W}})$ be arbitrary.
By using the fact that $\textbf{diag}(\hat{\mathcal{A}})^\dagger =\textbf{diag}(\hat{\mathcal{A}^\dagger}) $ and  Theorem \ref{Htthm} we can derive 
$$\mathcal{X}=  \mathcal{A}^\dagger  \ast_{Ht} \mathcal{B}+(\mathcal{I}-\mathcal{A}^\dagger \ast_{Ht} \mathcal{A})  \ast_{Ht} \mathcal{W}$$ is  the least-squares solution of  $\mathcal{A} \ast_{Ht} \mathcal{X}=\mathcal{B}$. For any least-squares solution $ \mathcal{X}_0$, we have
\begin{align*}
\| \mathcal{X}_0\|^2_F  =&\|   \mathcal{A}^\dagger \ast_{Ht} \mathcal{B}+(\mathcal{I}-\mathcal{A}^\dagger \ast_{Ht} \mathcal{A}) \ast_{Ht} \mathcal{W}\|^2_F  \\
=& \frac{1 }{n_3} \|   \textbf{diag}(\hat{\mathcal{A}})^\dagger \textbf{diag}(\hat{\mathcal{B}})+(\textbf{diag}(\hat{\mathcal{I}})-\textbf{diag}(\hat{\mathcal{A}})^\dagger \textbf{diag}(\hat{\mathcal{A}})) \textbf{diag}(\hat{\mathcal{W}}) \|^2_F   \\
= &\frac{1 }{n_3}  [\|   \textbf{diag}(\hat{\mathcal{A}})^\dagger \textbf{diag}(\hat{\mathcal{B}})\|^2_F+
\|  \textbf{diag}(\hat{\mathcal{I}})-\textbf{diag}(\hat{\mathcal{A}})^\dagger \textbf{diag}(\hat{\mathcal{A}}) ) \textbf{diag}(\hat{\mathcal{W}}) \|^2_F \\
& +  2tr(\Re ( (\textbf{diag}(\hat{\mathcal{A}})^\dagger \textbf{diag}(\hat{\mathcal{B}}))^\ast (\textbf{diag}(\hat{\mathcal{I}})-\textbf{diag}(\hat{\mathcal{A}})^\dagger \textbf{diag}(\hat{\mathcal{A}}))  \textbf{diag}(\hat{\mathcal{W}}) ) ) ].
\end{align*}
% where $\Re(\cdot)$ denotes the real part of a reduced biquaternion matrix. 
Since 
\begin{align*}
& (\textbf{diag}(\hat{\mathcal{A}})^\dagger \textbf{diag}(\hat{\mathcal{B}}))^\ast (\textbf{diag}(\hat{\mathcal{I}})-\textbf{diag}(\hat{\mathcal{A}})^\dagger \textbf{diag}(\hat{\mathcal{A}}))  \textbf{diag}(\hat{\mathcal{W}}) \\
=&  \textbf{diag}(\hat{\mathcal{B}}))^\ast (\textbf{diag}(\hat{\mathcal{A}})^\dagger)^\ast( \textbf{diag}(\hat{\mathcal{I}})-(\textbf{diag}(\hat{\mathcal{A}})^\dagger \textbf{diag}(\hat{\mathcal{A}}))^\ast )  \textbf{diag}(\hat{\mathcal{W}})  \\
=& \textbf{diag}(\hat{\mathcal{B}}))^\ast (  (\textbf{diag}(\hat{\mathcal{A}})^\dagger)^\ast-(\textbf{diag}(\hat{\mathcal{A}})^\dagger \textbf{diag}(\hat{\mathcal{A}}) \textbf{diag}(\hat{\mathcal{A}})^\dagger) ^\ast )  \textbf{diag}(\hat{\mathcal{W}}) \\
=& O,
\end{align*}
we can conclude that 
$$\| \mathcal{X}_0\|^2_F = \|\mathcal{A}^\dagger   \ast_{Ht} \mathcal{B}  \|^2_F + \|(\mathcal{I}- \mathcal{A}^\dagger \ast_{Ht} \mathcal{A} )  \ast_{Ht}\mathcal{W}  \|^2_F  \geq   \|\mathcal{A}^\dagger   \ast_{Ht} \mathcal{B}  \|^2_F.     
$$
Consequently,  the  least-squares solution of  tensor equation (\ref{EQAXB})  with minimum norm is 
$\mathcal{X} =\mathcal{A}^\dagger \ast_{Ht}  \mathcal{B} $.
\end{proof}

In an analogous way, we have the following result.  For simplicity, we omit the proof here.
\begin{theorem}
\label{HtLSMP2}
Let $\mathcal{A} \in \mathbb{H}_c^{n_1\times n_2 \times n_3}$ and $\mathcal{B} \in \mathbb{H}_c^{n_4\times n_2 \times n_3}.$
Then the least-squares solutions of RB tensor equation $\mathcal{X} \ast_{Ht} \mathcal{A} =\mathcal{B}$  can be  represented as   $\mathcal{X} =\mathcal{B} \ast_{Ht} \mathcal{A}^\dagger +\mathcal{W} \ast_{Ht}  (\mathcal{I}- \mathcal{A} \ast_{Ht}\mathcal{A}^\dagger) $,  where $\mathcal{W} $ is an arbitrary compatible RB tensor,  
and the the minimum-norm least-squares solutions of RB tensor equation $ \mathcal{X} \ast_{Ht} \mathcal{A} =\mathcal{B} $   is
$\mathcal{X} =\mathcal{B} \ast_{Ht}  \mathcal{A}^\dagger $.  
\end{theorem}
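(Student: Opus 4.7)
The plan is to mirror the proof of Theorem \ref{HtLSMP} almost verbatim, with the roles of left and right multiplication swapped. The key observation is that Theorem \ref{Htthm} converts the Ht-product equation $\mathcal{X}\ast_{Ht}\mathcal{A}=\mathcal{B}$ into the matrix equation $\textbf{diag}(\widehat{\mathcal{X}})\,\textbf{diag}(\widehat{\mathcal{A}})=\textbf{diag}(\widehat{\mathcal{B}})$ over $\mathbb{H}_c$, while Theorem \ref{HtF} shows that the Frobenius norm is preserved (up to the factor $1/\sqrt{n_3}$). Thus minimizing $\|\mathcal{X}\ast_{Ht}\mathcal{A}-\mathcal{B}\|_F$ is equivalent to minimizing $\|\textbf{diag}(\widehat{\mathcal{X}})\,\textbf{diag}(\widehat{\mathcal{A}})-\textbf{diag}(\widehat{\mathcal{B}})\|_F$, which is a standard least-squares problem for RB matrices.

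First, I would invoke the matrix-level fact (the right-sided counterpart of the one used in the previous theorem) that the general least-squares solutions of $XA=B$ over $\mathbb{H}_c$ are given by
\[
X=BA^\dagger+W(I-AA^\dagger),
\]
where $W$ is arbitrary of compatible size. Applying this to $A=\textbf{diag}(\widehat{\mathcal{A}})$, $B=\textbf{diag}(\widehat{\mathcal{B}})$, and using the identity $\textbf{diag}(\widehat{\mathcal{A}^\dagger})=(\textbf{diag}(\widehat{\mathcal{A}}))^\dagger$ established in the proof of Theorem \ref{HtMP}, translation back through Theorem \ref{Htthm} yields the asserted form
\[
\mathcal{X}=\mathcal{B}\ast_{Ht}\mathcal{A}^\dagger+\mathcal{W}\ast_{Ht}(\mathcal{I}-\mathcal{A}\ast_{Ht}\mathcal{A}^\dagger).
\]

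For the minimum-norm assertion, I would expand $\|\mathcal{X}\|_F^2$ by Theorem \ref{HtF} at the block-diagonal level into three pieces: the square of the norm of $BA^\dagger$, the square of the norm of $W(I-AA^\dagger)$, and a cross term proportional to $\mathrm{tr}\,\Re\bigl((BA^\dagger)^\ast W(I-AA^\dagger)\bigr)$. Using the cyclic property of the trace, this cross term equals $\mathrm{tr}\,\Re\bigl((I-AA^\dagger)(A^\dagger)^\ast B^\ast W\bigr)$; since $AA^\dagger$ is Hermitian, $(I-AA^\dagger)(A^\dagger)^\ast=(A^\dagger)^\ast-(A^\dagger AA^\dagger)^\ast=O$, so the cross term vanishes. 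Consequently
\[
\|\mathcal{X}\|_F^2=\|\mathcal{B}\ast_{Ht}\mathcal{A}^\dagger\|_F^2+\|\mathcal{W}\ast_{Ht}(\mathcal{I}-\mathcal{A}\ast_{Ht}\mathcal{A}^\dagger)\|_F^2\;\geq\;\|\mathcal{B}\ast_{Ht}\mathcal{A}^\dagger\|_F^2,
\]
with equality when the second term is zero, identifying $\mathcal{X}=\mathcal{B}\ast_{Ht}\mathcal{A}^\dagger$ as the minimum-norm least-squares solution.

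The only delicate point is to make sure the side on which the residual $(I-AA^\dagger)$ appears is correct: for the equation $XA=B$ the Moore--Penrose projector must act on the right ($I-AA^\dagger$), whereas for $\mathcal{A}\ast_{Ht}\mathcal{X}=\mathcal{B}$ in Theorem \ref{HtLSMP} it acted on the left ($I-A^\dagger A$). This is a straightforward consequence of the four Moore--Penrose equations but is the one spot where a careless transposition would spoil the argument. Once the projector is placed correctly, the Hermicity of $AA^\dagger$ does the rest and everything else is a routine transcription of the earlier proof.
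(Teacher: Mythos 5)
Your proposal is correct and is essentially the paper's intended argument: the paper explicitly omits the proof of Theorem \ref{HtLSMP2}, stating only that it follows ``in an analogous way'' from Theorem \ref{HtLSMP}, and your transcription is precisely that analogue, with the projector correctly moved to the right as $\mathcal{I}-\mathcal{A}\ast_{Ht}\mathcal{A}^\dagger$ and the cross term killed via the Hermiticity of $AA^\dagger$ exactly as in the paper's proof of the left-sided case.
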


 \section{Applications in color video processing}
 \iffalse
 In the section, based on the Ht-SVD, we present the definition of Ht-tubal rank for  the tensor $\mathcal{A}\in \mathbb{H}_c^{n_{1} \times n_{2}\times n_{3}}$ and the rank-$k$  approximation $\mathcal{A}_k$ of $\mathcal{A}$. As application, we use $\mathcal{A}_k$ to compress the color video. In addition, as application of Moore-Penrose inverse and Theorem \ref{HtLSMP2}, we also provide a numerical experiment to deblur the color video. 
\fi
 In the section, we propose two algorithms for the color video compression and debluring. Both experiments of color video processing show the  superiority of our methods over the compared methods.

Based on the Ht-SVD, we  define Ht-tubal rank for  the tensor $\mathcal{A}\in \mathbb{H}_c^{n_{1} \times n_{2}\times n_{3}}$ and the rank-$k$  approximation $\mathcal{A}_k$ of $\mathcal{A}$ as follows.
\begin{definition}\label{rank}
 For a   tensor $\mathcal{A}\in \mathbb{H}_c^{n_{1} \times n_{2}\times n_{3}}$ with Ht-SVD, i.e,  $\mathcal{A} = \mathcal{U}*_{Ht}\mathcal{S}*_{Ht}\mathcal{V}^*$. The Ht-tubal rank  $rank_{Ht-tubal}(\mathcal{A})$ of $\mathcal{A}$ is defined to be the number of nonzero elements of ${\mathcal{S}(i,i,:)}^{w}_{i=1}$, where $w \doteq \text{min}(n_{1},n_{2})$, i.e.,
 \begin{equation}
rank_{Ht-tubal}(\mathcal{A})= \#\{i\mid \Vert\mathcal{S}(i,i,:)\Vert_{F}>0\}.\nonumber   
 \end{equation}
\end{definition}   
When we compute the SVD of frontal slices $\hat{\mathcal{A}}^{(1)}, \cdots, \hat{\mathcal{A}}^{(n_3)}$ with singular values { in decreasing order},  $rank_{Ht-tubal}(\mathcal{A})=\mathop{\max}\limits_{i}\{rank(\hat{\mathcal{A}}^{(1)}), \cdots, rank(\hat{\mathcal{A}}^{(n_3)})\}$.
If we assume $k \leq rank_{Ht-tubal}(\mathcal{A})$, then 
\begin{equation}
\label{Ak}
\mathcal{A}_k=\sum\limits_{i=1}^{k} \mathcal{U}(:,i,:)\ast_{Ht} \mathcal{S}(i, i,:)\ast_{Ht} \mathcal{V}(:,i,:)^\ast 
\end{equation}
has the Ht-tubal rank of $k.$ Next, we will apply the  approximation $\mathcal{A}_k$ of $\mathcal{A}$ to compress the color video.
\\
\noindent \textbf{Example 6.2: (Color video compression)} We use a reduced biquaternion tensor $\mathcal{A} \in \mathbb{H}_c^{n_1\times n_2 \times n_3}$ to express a color video,  i.e.,
\[\mathcal{A}_{C}(m,n,t)=\mathcal{A}_{R}(m,n,t)\mathbf{i}+\mathcal{A}_{G}(m,n,t)\mathbf{j}+\mathcal{A}_{B}(m,n,t)\mathbf{k},\]
where  the red $\mathcal{A}_{R}(m,n,t)$, green $\mathcal{A}_{G}(m,n,t)$ and blue  $\mathcal{A}_{B}(m,n,t)$  pixel values are sets    of the $t$-th color image of a color video. 

\begin{algorithm}
	%\textsl{}\setstretch{1.8}
	\renewcommand{\algorithmicrequire}{\textbf{Input:}}
	\renewcommand{\algorithmicensure}{\textbf{Output:}}
	\caption{{\bf (RkA-RBT)} Rank-$k$  Approximation $\mathcal{A}_k$ of Reduced  Biquaternion Tensor $\mathcal{A}$}
	\label{alg5}
	\begin{algorithmic}[1]
		\STATE $\textbf{Input}$: $\mathcal{A}_{(c),1},\mathcal{A}_{(c),2}\in \mathbb{C}^{n_1\times n_2 \times n_3}$ from given $\mathcal{A} =\mathcal{A}_{(c),1} e_1+ \mathcal{A}_{(c),2} e_2 \in \mathbb{H}_c^{n_1\times n_2 \times n_3}$, given $k$
		\STATE  $\textbf{Output}$: $\mathcal{A}_k\in \mathbb{H}_c^{n_1 \times n_2\textbf{}  \times n_3}$ such that
 \[
 \mathcal{A}_k=\mathcal{U}_k\ast_{Ht} \mathcal{S}_k\ast_{Ht} \mathcal{V}_k^\ast  
 \]  
           
\STATE   $ \hat{\mathcal{A}}_{(c),1}$=fft$( \mathcal{A}_{(c),1},[],3)$;$\hat{\mathcal{A}}_{(c),2}$=fft$( \mathcal{A}_{(c),2},[],3)$
\STATE  Calculate each frontal slice of $\hat{\mathcal{U}}, \hat{\mathcal{S}}$ and $\hat{\mathcal{V}}$ from $\hat{\mathcal{A}}$:\\
 $\textbf{for}$ $i=1,...n_{3},$ $\textbf{do}$\\
 $[\hat{U}_1^{(i)}, \hat{S}_1^{(i)}, \hat{V}_1^{(i)}]=svd(\hat{\mathcal{A}}_{(c),1}^{(i)})$,  $[\hat{U}_2^{(i)}, \hat{S}_2^{(i)}, \hat{V}_2^{(i)}]=svd(\hat{\mathcal{A}}_{(c),2}^{(i)})$\\
 $\textbf{end for}$
  \STATE Calculate the approximate frontal slice $\hat{\mathcal{A}}_{(c),1}^{(i)}(k), \hat{\mathcal{A}}_{(c),2}^{(i)}(k)$ of $\hat{\mathcal{A}}_{(c),1}(k), \hat{\mathcal{A}}_{(c),2}(k)$  :\\
$\textbf{for}$ $i=1,...n_{3},$ $\textbf{do}$\\
 $\hat{\mathcal{A}}_{(c),1}^{(i)}(k)=\hat{U}_1^{(i)}(:,1:k)\hat{S}_1^{(i)}(1:k,1:k)\hat{V}_1^{(i)}(:,1:k)^\ast$\\
 $\hat{\mathcal{A}}_{(c),2}^{(i)}(k)=\hat{U}_2^{(i)}(:,1:k)\hat{S}_2^{(i)}(1:k,1:k)\hat{V}_2^{(i)}(:,1:k)^\ast$\\
  $\textbf{end for}$
  \STATE $\mathcal{A}_{(c),1}(k)$=ifft($\hat{\mathcal{A}}_{(c),1}(k)$,$\left[\,\right]$,3) \\
  $\mathcal{A}_{(c),2}(k)$=ifft($\hat{\mathcal{A}}_{(c),2}(k)$,$\left[\,\right]$,3)
 \STATE $\mathcal{A}_k=\mathcal{A}_{(c),1}(k) e_1+ \mathcal{A}_{(c),2}(k) e_2$
\STATE $\textbf{return}$   $\mathcal{A}_k$.
	\end{algorithmic}  
\end{algorithm}
For comparison, we apply \texttt{SVD-RBT} to compress the same color video ``DO01\_013" as the one in \cite{qinzhanglp2022}, while in \cite{qinzhanglp2022}, it was compressed by \texttt{Qt-SVD}. Fig. \ref{psnr} shows the PSNRs of the compressed video when taking the rank-$k$ $(k=10, 20, 50)$ approximations $\mathcal{A}_k$ in (\ref{Ak})
to ``DO01\_013" for the 1st, the 20th and the 50th frames, respectively. 

To compute  the PSNR of each frame, we can treat each frame as a real tensor $\mathcal{C}\in \mathbb{R}^{n_1 \times n_2 \times 3}$, and the approximate frame as $\mathcal{C}_k\in \mathbb{R}^{n_1 \times n_2 \times 3}$, thus its PSNR is defined as
\begin{equation}\label{psnr2}
PSNR \doteq 10log_{10}\left(\frac{3n_1n_2\|\mathcal{C}\|_{\infty}^{2}}{\|\mathcal{C}-\mathcal{C}_k\|_{F}^{2}}\right),
\end{equation}
with $\mathcal{C}=(C_{mnt})_{n_1 \times n_2 \times 3}, \mathcal{C}_k\in \mathbb{R}^{n_1\times n_2 \times 3}$, $\|\mathcal{C} \|_{\infty} =\mathop{\max}\limits_{m,n,t}|C_{mnt}|$.
\\ 
\iffalse
To implement  Algorithm 2,  one needs to calculate $2n_3$ SVDs of  complex matrices with size of ${n_1\times n_2}$. While by the Qt-SVD method, one needs to calculate $n_3$ QSVDs of quaternion matrices with  size of  ${n_1\times n_2}$. 
\fi

From  Fig. \ref{psnr}, we see that the performance of \texttt{RkA-RBT} is comparable to  Fig. 2 in \cite{qinzhanglp2022}. Moreover, Table \ref{table1} shows that the using of the Ht-SVD is much faster than Qt-SVD in \cite{qinzhanglp2022}, which used the algorithm provided by Jia, Ng and Song \cite{ng} in MATLAB.   

 \begin{figure}%[H]
		\begin{center}
			\includegraphics[width=\linewidth]{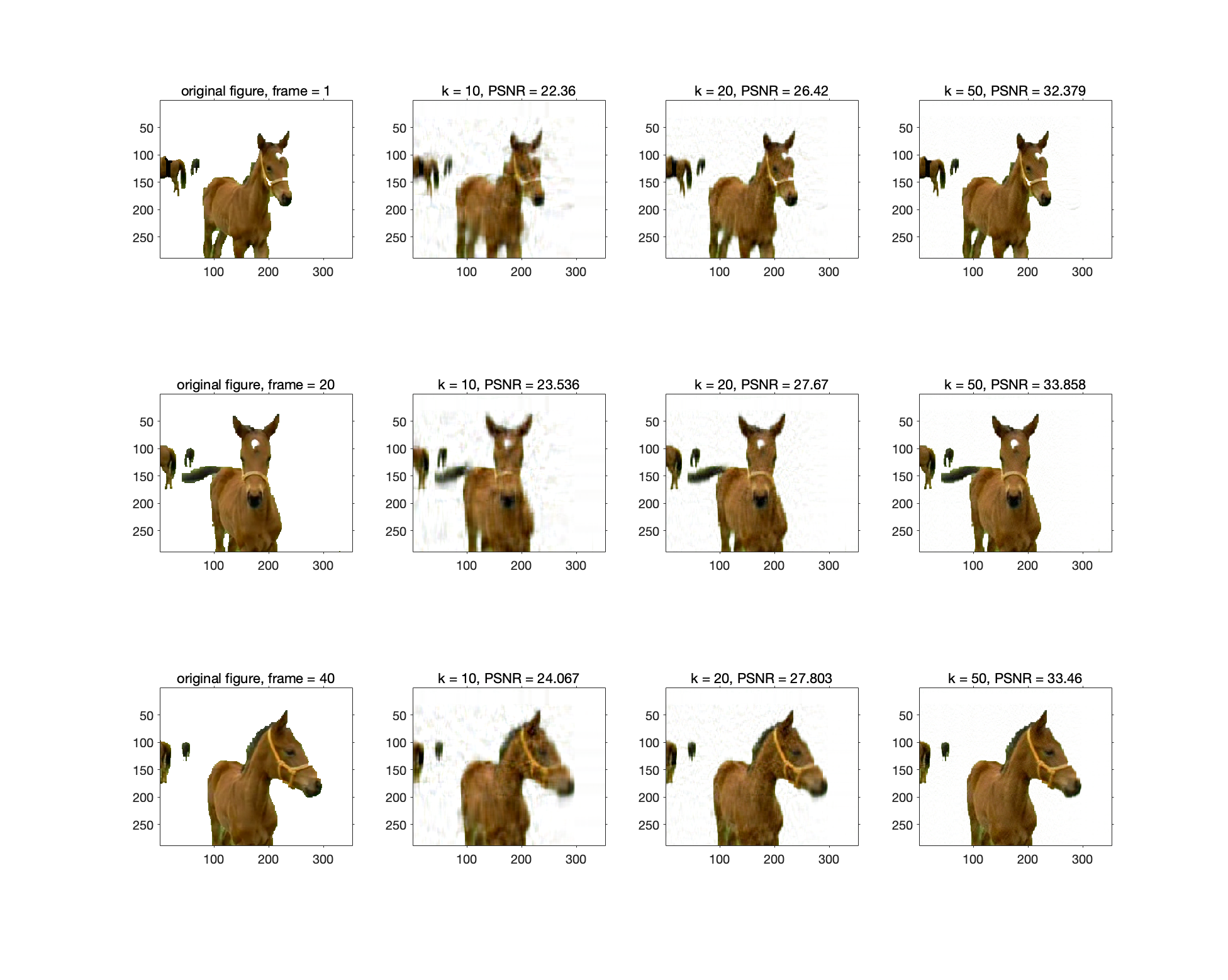}
		\end{center}
		\vskip-15pt
		\caption{PSNRs of the rank-k (k=10, 20, 50) approximations to DO01\_013 by Ht-SVD.}\label{psnr}
	\end{figure}%

\vspace{20.4cm}

\begin{table}
\begin{center}
\begin{tabular}{ |c| c| c| c| c|}
 \hline
 frame & 10 & 20 & 30 & 40\\
 \hline
 Qt-SVD & 20.861257s & 46.294602s & 89.366687s & 116.283031s\\
 \hline
 Ht-SVD & 2.715995s & 6.818766s & 9.597188s & 13.1181154s\\
\hline\end{tabular}
\\[3mm]
\caption{The running time of  Ht-SVD and Qt-SVD for the different number of frames.}
\label{table1}
\end{center}
\end{table}

\iffalse 
\begin{table}[]
\centering
\begin{tabular}{ccccc}
\hline
\multirow{2}{*}{Method} & \multicolumn{4}{c}{Frame}                         \\ \cline{2-5} 
                        & 10         & 20        & 30         & 40          \\ \hline
Qt-SVD                  & 20.861257s & 46.294602 & 89.366687s & 116.283031s \\
Ht-SVD & \textbf{2.715995s} & \textbf{6.818766s} & \textbf{9.597188s} & \textbf{13.1181154s} \\ \hline
\end{tabular}
\caption{The running time of  Ht-SVD and Qt-SVD for the different number of frames}
\end{table}
 \fi

\noindent \textbf{Example 6.3: (Color video deblurring)}
Video deblurring is a crucial task in video processing with applications ranging from video enhancement to computer vision tasks, like object recognition and tracking. Given an original video \( \mathcal{A} \in \mathbb{H}_c^{n_1 \times n_2 \times n_3} \) and its blurred video \( \mathcal{B} \in \mathbb{H}_c^{n_1 \times n_2 \times n_3} \), the goal is to find the deblurring filter \( \mathcal{F} \) that minimizes the reconstruction error between the filtered video \( \mathcal{F} \ast_{Ht} \mathcal{B} \) and the original video \( \mathcal{A} \).

This problem can be formulated as the following least-squares problem:

\[
\min_{\mathcal{F}} \| \mathcal{A} - \mathcal{F} \ast_{Ht} \mathcal{B} \|_F^2.
\]
Once the deblurring filter \( \mathcal{F} \) is obtained, it can be applied to a blurred video to achieve deblurring. Specifically, for a new blurred video \( \mathcal{B}_{\text{new}} \), the deblurred video \( \mathcal{A}_{\text{deblurred}} \) can be obtained by:

\[
\mathcal{A}_{\text{deblurred}} = \mathcal{F} \ast_{Ht} \mathcal{B}_{\text{new}}.
\]
In this problem, the filter \( \mathcal{F} \) is determined via a least-squares method. According to Theorem \ref{HtLSMP2}, the optimal solution for \( \mathcal{F} \) is given by \( \mathcal{F} = \mathcal{A} \ast_{Ht} \mathcal{B}^\dagger \). 
The detailed solution process is summarized in Algorithm 3.

\begin{algorithm}
	\renewcommand{\algorithmicrequire}{\textbf{Input:}}
	\renewcommand{\algorithmicensure}{\textbf{Output:}}
	\caption{{\bf (LSD-RBT)} Least-Squares Deblurring via Reduced Biquaternion Tensors}
	\label{deblur_algo}
	\begin{algorithmic}[1]
		\STATE $\textbf{Input}$: $\mathcal{A}_{(c),1},\mathcal{A}_{(c),2}\in \mathbb{C}^{n_1\times n_2 \times n_3}$ from given $\mathcal{A} =\mathcal{A}_{(c),1} e_1+ \mathcal{A}_{(c),2} e_2 \in \mathbb{H}_c^{n_1\times n_2 \times n_3}$ ; \\
  $\mathcal{B}_{(c),1},\mathcal{B}_{(c),2}\in \mathbb{C}^{n_1\times n_2 \times n_3}$ from given $\mathcal{B} =\mathcal{B}_{(c),1} e_1+ \mathcal{B}_{(c),2} e_2 \in \mathbb{H}_c^{n_1\times n_2 \times n_3}$ 
		\STATE  $\textbf{Output}$: $\mathcal{F}\in \mathbb{H}_c^{n_1 \times n_1  \times n_3}$ such that
\[
\mathcal{F} = \min_{\mathcal{F}} \| \mathcal{A} - \mathcal{F} \ast_{Ht} \mathcal{B} \|_F^2
\]
           
\STATE   $\hat{\mathcal{A}}_{(c),1}$=fft$( \mathcal{A}_{(c),1},[],3)$;$\hat{\mathcal{A}}_{(c),2}$=fft$( \mathcal{A}_{(c),2},[],3); \hat{\mathcal{B}}_{(c),1}$=fft$( \mathcal{B}_{(c),1},[],3)$;$\hat{\mathcal{B}}_{(c),2}$=fft$( \mathcal{B}_{(c),2},[],3)$

\STATE  $\textbf{for}$ $i=1,...n_{3},$ $\textbf{do}$\\
 $[\hat{U}_1^{(i)}, \hat{S}_1^{(i)}, \hat{V}_1^{(i)}]=svd(\hat{\mathcal{B}}_{(c),1}^{(i)})$,  $[\hat{U}_2^{(i)}, \hat{S}_2^{(i)}, \hat{V}_2^{(i)}]=svd(\hat{\mathcal{B}}_{(c),2}^{(i)})$\\
 $\hat{\mathcal{B}}_{(c),1}^{\dagger(i)} = \hat{V}_1^{(i)} \hat{S}_1^{\dagger(i)}  \hat{U}_1^{(i)\ast} $,
 $\hat{\mathcal{B}}_{(c),2}^{\dagger(i)} = \hat{V}_2^{(i)} \hat{S}_2^{\dagger(i)}  \hat{U}_2^{(i)\ast} $ \\
 $\hat{\mathcal{F}}_{(c),1} = \hat{\mathcal{A}}_{(c),1}^{(i)} \hat{\mathcal{B}}_{(c),1}^{\dagger(i)}$, 
  $\hat{\mathcal{F}}_{(c),2} = \hat{\mathcal{A}}_{(c),2}^{(i)} \hat{\mathcal{B}}_{(c),2}^{\dagger(i)}$ \\
 $\textbf{end for}$
  \STATE ${\mathcal{F}}_{(c),1}$=ifft($\hat{\mathcal{F}}_{(c),1}$,$\left[\,\right]$,3), ${\mathcal{F}}_{(c),2}$=ifft($\hat{\mathcal{F}}_{(c),2}$,$\left[\,\right]$,3),
  $\mathcal{F} = {\mathcal{F}}_{(c),1} e_1 + {\mathcal{F}}_{(c),2} e_2$
\STATE $\textbf{return}$   $\mathcal{F}$
	\end{algorithmic}  
\end{algorithm}

For video deblurring experiments, we select color videos from the DAVIS 2017 dataset \cite{Pont-Tuset2017}. The experimental comparison is conducted between traditional quaternion-based method \cite{Li2023} and the newly proposed reduced biquaternion method. Table \ref{table2} illustrates the comparative results in terms of PSNR, relative error, and CPU time per frame. The relative error between the deblurred video  \(\tilde{\mathcal{A}}\) and the original video  \(\mathcal{A}\) 
 is computed as follows:
\[
\text{Relative Error} = \frac{\| \mathcal{A} - \tilde{\mathcal{A}} \|_F}{\| \mathcal{A} \|_F},
\]
and PSNR is defined by (\ref{psnr2}).
As shown in Table \ref{table2}, our reduced biquaternion method \texttt{LSD-RDT} demonstrates superior performance compared to traditional quaternion-based method. Fig. \ref{fig2} and Fig. \ref{fig3} showcase the enhanced deblurring effects on the 'Tram' and 'Butterfly' videos, respectively, illustrating the effectiveness of the RB method in restoring video clarity.

\begin{table}[H]
\centering
\begin{tabular}{ccccc}
\hline
Video                      & Method               & PSNR             & Relative error    & CPU time (second) \\ \hline
\multirow{2}{*}{Tram}      & Quaternion           & 30.4592          & 0.058439          & 1.867             \\
                           & Reduced biquaternion & \textbf{33.8519} & \textbf{0.039519} & \textbf{0.152}    \\ \hline
\multirow{2}{*}{Butterfly} & Quaternion           & 32.9166          & 0.053911          & 1.947             \\
                           & Reduced biquaternion & \textbf{38.8111} & \textbf{0.027293} & \textbf{0.147}    \\ \hline
\end{tabular}
\caption{Performance Comparison of Quaternion and Reduced Biquaternion Methods for Video Deblurring.}\label{table2}
\end{table}

 \begin{figure}[H]
		\begin{center}
			\includegraphics[width=\linewidth	]{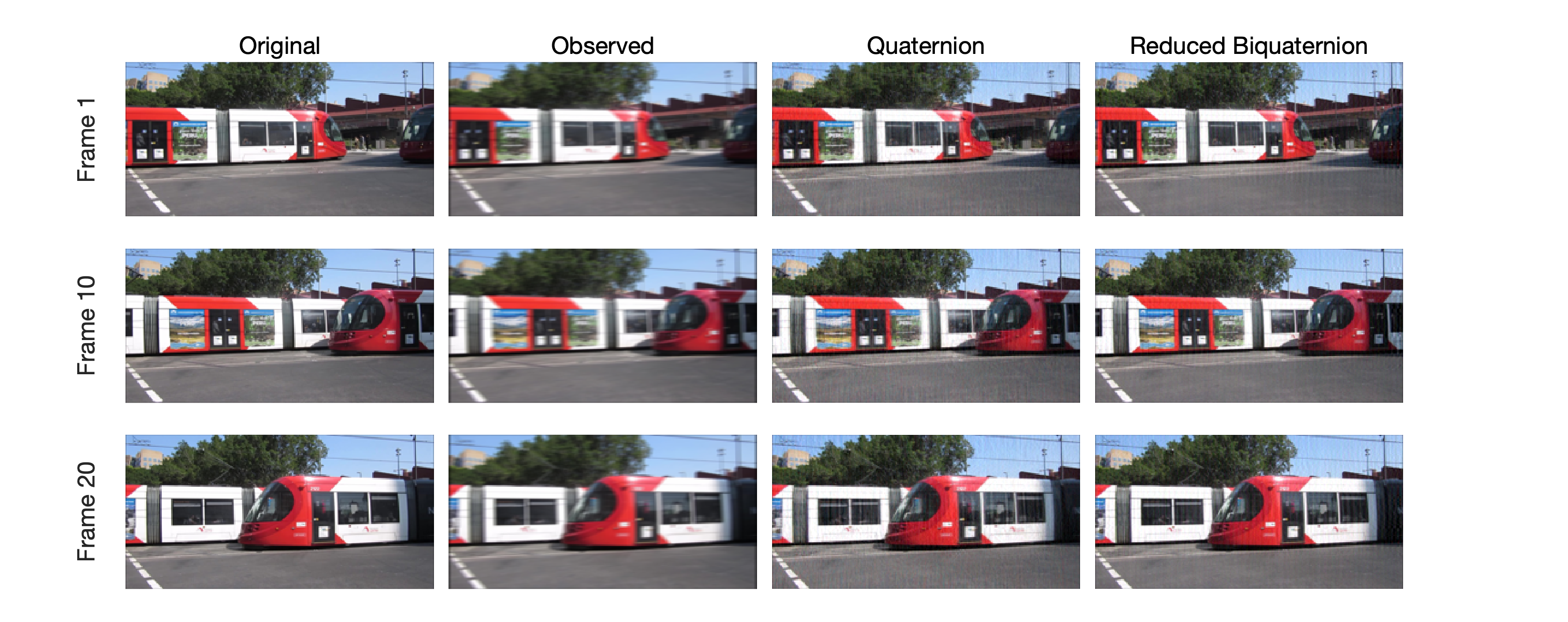}
		\end{center}
		\vskip-15pt
		\caption{Example of Deblurring Estimation for Three Blurred Frames in Video 'Tram'.}\label{fig2}
	\end{figure}%

 \begin{figure}[H]
		\begin{center}
			\includegraphics[width=\linewidth	]{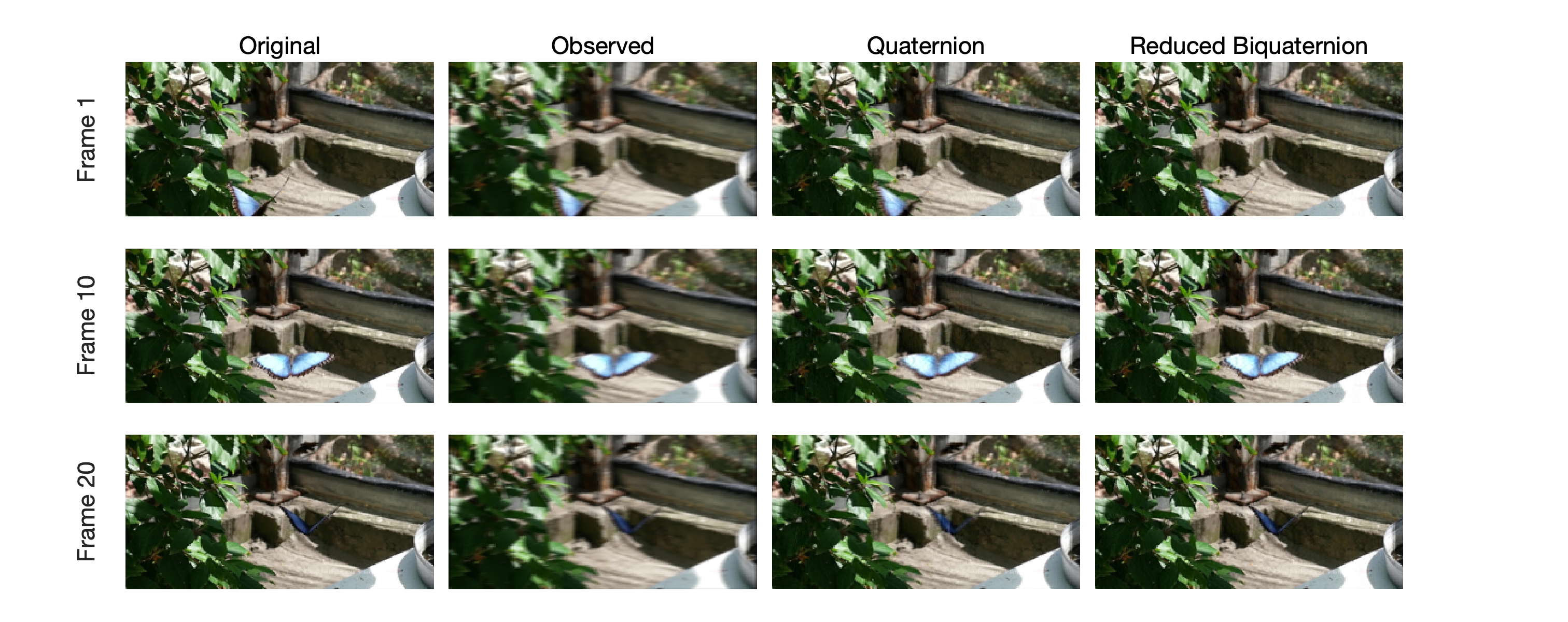}
		\end{center}
		\vskip-15pt
		\caption{Example of Deblurring Estimation for Three Blurred Frames in Video 'Butterfly'.}\label{fig3}
	\end{figure}%
\noindent \textbf{Conclusion:}  In this paper, we first define Ht-product for the third-order RB tensors. Then under this product, we define the Ht-SVD for a third-order RB tensor and develop a computing  algorithm. Moreover, we extend the Moore-Penrose inverse to the third-order RB tensors. Our Ht-SVD allows us to prove many properties of this MP inverse efficiently. Furthermore, we discuss the general solution/the least-squares solutions/minimal norm least-squares solution to the classic RB tensor equations   
via our MP inverses. As the applications of our results, we develop two algorithms and apply them to  color video  compression and deblurring. Both 
 of two experiments illustrate the superiority of our methods by comparing with other methods.   

 \section{Declarations}
{\bf Conflict of Interest:} The authors have not disclosed any competing interests.
\\
\\
{\bf Data Availability Statement:}
The data supporting the findings of this study are not publicly available due to privacy or ethical restrictions. However, interested researchers may request access to the data by contacting the corresponding author and completing any necessary data sharing agreements.

\end{document}